\documentclass[reqno,a4paper, 11pt]{amsart}

\usepackage[a4paper=true,pdfpagelabels]{hyperref}
\usepackage{graphicx}

\usepackage[ansinew]{inputenc}
\usepackage{amsfonts,epsfig}
\usepackage{latexsym}
\usepackage{amsmath}
\usepackage{amssymb}
\usepackage{color}

\newtheorem{theorem}{Theorem}
\newtheorem{lemma}[theorem]{Lemma}

\newtheorem{proposition}[theorem]{Proposition}

\newtheorem{lettertheorem}{Theorem}
\newtheorem{letterlemma}[lettertheorem]{Lemma}

\theoremstyle{definition}

\theoremstyle{remark}

\numberwithin{equation}{section}

\setlength\arraycolsep{2pt}

\def\wo{\widehat{\om}}
\newcommand{\op}{\mathrm{o}}

\newcommand{\D}{\mathbb{D}}
\newcommand{\DD}{\widehat{\mathcal{D}}}
\newcommand{\N}{\mathbb{N}}
\newcommand{\RR}{\mathbb{R}}

\newcommand{\C}{\mathbb{C}}

\newcommand{\e}{\varepsilon}
\newcommand{\ep}{\varepsilon}

\renewcommand{\phi}{\varphi}

\newcommand{\T}{\mathbb{T}}

\newcommand{\wtT}{\widetilde{T}}

\def\a{\alpha}       \def\b{\beta}        \def\g{\gamma}
\def\d{\delta}           \def\e{\varepsilon}
\def\la{\lambda}     \def\om{\omega}      
       \def\t{\theta}       
         \def\r{\rho}         \def\z{\zeta}
\def\F{\Phi}                  \def\vp{\varphi}

\def\R{{\mathcal R}}

\def\Tm{ \mathcal{T}_\mu}

\renewcommand{\H}{\mathcal{H}}
\newcommand{\SSS}{\mathcal{S}}
\newenvironment{Prf}{\noindent{\emph{Proof of}}}
{\hfill$\Box$ }
\addtolength{\hoffset}{-1cm}
\addtolength{\textwidth}{2cm}
\addtolength{\voffset}{-1cm}
\addtolength{\textheight}{2cm}

\begin{document}

\title[ Toeplitz operators and Berezin
transform ]{Berezin transform and  Toeplitz operators on weighted
Bergman spaces induced by regular weights }

\keywords{Bergman space, Berezin transform, Carleson measure, Composition operator, regular weight, Toeplitz operator}

\author{Jos\'e \'Angel Pel\'aez}
\address{Jos\'e \'Angel Pel\'aez, Departamento de An\'alisis Matem\'atico, Universidad de M\'alaga, Campus de
Teatinos, 29071 M\'alaga, Spain} \email{japelaez@uma.es}

\author{Jouni R\"atty\"a}
\address{Jouni R\"atty\"a, University of Eastern Finland, P.O.Box 111, 80101 Joensuu, Finland}
\email{jouni.rattya@uef.fi}

\author{Kian Sierra}

\address{Kian Sierra, University of Eastern Finland, P.O.~Box 111, 80101 Joensuu, Finland; Departamento de An\'alisis Matem\'atico, Universidad de M\'alaga, Campus de
Teatinos, 29071 M\'alaga, Spain}

\email{kian.sierra@uef.fi}

\date{\today}

\thanks{This research was supported in part by
by Ministerio de Econom\'{\i}a y Competitivivad, Spain,   projects
MTM2014-52865-P,  and MTM2015-69323-REDT; La Junta de Andaluc{\'i}a,
project FQM210 and Academy of Finland project no. 268009, and the
Faculty of Science and Forestry of University of Eastern Finland
project no. 930349.}

\date{\today}

%%%%%%%%%%%%%%%%%%%%%%%%%%%%%
%%%% ----  ABSTRACT ---- %%%%
%%%%%%%%%%%%%%%%%%%%%%%%%%%%%

\begin{abstract}
Given a regular weight $\omega$ and a positive Borel measure
$\mu$ on the unit disc~$\D$, the Toeplitz operator associated with $\mu$
is
    $$
    \mathcal{T}_\mu(f)(z)=\int_\D f(\z)\overline{B_z^\omega(\z)}\,d\mu(\z),
    $$
where $B^\om_{z}$ are the reproducing kernels of the weighted
Bergman space $A^2_\om$. We describe bounded and
compact Toeplitz operators $\mathcal{T}_\mu:A^p_\omega\to
A^q_\omega$, $1<q,p<\infty$, in terms of Carleson measures and the Berezin transform
    $$
    \widetilde{\mathcal{T}}_\mu(z)=\frac{\langle\mathcal{T}_\mu(B^\om_{z}), B^\om_{z} \rangle_{A^2_\omega}}{\|B_z^\omega\|^2_{A^2_\omega}}.
    $$
We also characterize Schatten class Toeplitz operators in terms of the Berezin transform and apply this result to study Schatten class composition operators.
\end{abstract}

\maketitle

\section{Introduction and main results}

Let $\H(\D)$ denote the space of analytic functions in the unit disc $\D=\{z\in\C:|z|<1\}$. For $0<p<\infty$ and a nonnegative
integrable function $\om$ on $\D$, the weighted Bergman space~$A^p_\omega$ consists of $f\in\H(\D)$ such
that
    $$
    \|f\|_{A^p_\omega}^p=\int_\D|f(z)|^p\omega(z)\,dA(z)<\infty,
    $$
where $dA(z)=\frac{dx\,dy}{\pi}$ is the normalized Lebesgue area
measure on $\D$. As usual, $A^p_\alpha$ denotes the weighted Bergman
space induced by the standard radial weight $(1-|z|^2)^\alpha$.

A radial weight $\om$ belongs to the class~$\DD$ if
$\widehat{\om}(z)=\int_{|z|}^1\om(s)\,ds$ satisfies the doubling condition $\widehat{\om}(r)\le
C\widehat{\om}(\frac{1+r}{2})$. Further, a radial weight $\om\in\DD$ is regular,
denoted by $\om\in\R$, if $\om(r)$ behaves as its
integral average over $(r,1)$, that is,
    \begin{equation*}
    \om(r)\asymp\frac{\int_r^1\om(s)\,ds}{1-r},\quad 0\le r<1.
    \end{equation*}
Every standard weight as well as those given in
\cite[(4.4)--(4.6)]{AS} are regular. It is easy to see that for each radial weight $\om$, the norm convergence in $A^2_\om$ implies
the uniform convergence on compact subsets of $\D$, and hence the
Hilbert space $A^2_\om$ is a closed subspace of $L^2_\om$ and the
orthogonal Bergman projection $P_\om$ from $L^2_\om$ to $A^2_\om$ is
given by
    \begin{equation*}
    P_\om(f)(z)=\int_{\D}f(\z)\overline{B^\om_{z}(\z)}\om(\z)\,dA(\z),
    \end{equation*}
where $B^\om_{z}$ are the reproducing kernels of $A^2_\om$. Recently, those regular weights $\om$ and $\nu$ for which $P_\om:L^p_\nu\to L^p_\nu$ is bounded were characterized in terms of
Bekoll\'e-Bonami type conditions~\cite{Twoweight}. In this paper we consider operators which are natural extensions of  the orthogonal projection $P_\om$.
For a positive Borel measure $\mu$ on $\D$, the Toeplitz operator associated with $\mu$ is
    $$
    \mathcal{T}_\mu(f)(z)=\int_\D f(\z)\overline{B_z^\om(\z)}\,d\mu(\z).
    $$
If $d\mu=\Phi \om dA$ for a non-negative function $\Phi$, then write $\mathcal{T}_\mu=\mathcal{T}_\Phi$ so that $\mathcal{T}_\Phi(f)=P_\om(f\Phi)$.
The operator~$\mathcal{T}_\Phi$ has been extensively studied since the seventies \cite{CobInd73, McSuInd79, ZhuTams87}.
Luecking was probably the one who introduced Toeplitz operators $\mathcal{T}_\mu$
with measures as symbols in~\cite{Lu87}, where he provides, among other things, a
description of Schatten class Toeplitz
operators $\mathcal{T}_\mu:A^2_\a\to A^2_\a$ in terms of an $\ell^p$-condition involving a hyperbolic lattice of $\D$. More recently, Zhu~\cite{ZhuNY07} gave an alternative characterization in terms of $L^p\left(\frac{dA}{(1-|\cdot|)^2}\right)$-integrability
of the Berezin transform of $\mathcal{T}_\mu$ in the widest
possible range of the paremeters $p$ and $\alpha$. We refer to
\cite[Chapter~7]{Zhu} for the theory of Toeplitz operators~$\mathcal{T}_\mu$ acting on $A^2_\alpha$ and to
\cite{ChoeKooYiPotAnal02,PauZhao} for descriptions in terms of
Carleson measures and the Berezin transform of bounded and compact
Toeplitz operators $\mathcal{T}_\mu:A^p_\alpha\to
A^q_\alpha$, $1<p,q<\infty$. The Berezin transform of a bounded
linear operator $T:A^2_\omega \to A^2_\omega$ is
    \begin{equation}\label{101}
    \widetilde{T}(z)=\langle T(b^\om_z), b^\om_z\rangle_{A^2_\om},
    \end{equation}
where $b^\om_z=\frac{B^\om_z}{\| B^\om_z\|_{A^2_\om}}$ are the
normalized reproducing kernels of $A^2_\om$. Given $0<p,q<\infty$
and a positive Borel measure $\mu$ on $\D$, we say that $\mu$ is a
$q$-Carleson measure for $A^p_\om$ if the identity operator
$Id:A^p_\om\to L^q_\mu$ is bounded. A description of $q$-Carleson
measures for $A^p_\om$ induced by doubling weights was recently given in~\cite{PelRatEmb}, see also \cite{PelRatSie2015}.

One of the main purposes of this study is to characterize, in terms of Carleson measures
and the Berezin transform $\widetilde{\mathcal{T}}_\mu$, those
positive Borel measures $\mu$ such that the Toeplitz operator $\mathcal{T}_\mu:
A^p_\om\to A^q_\om$, where $1<p,q<\infty$ and $\om\in\R$, is bounded or compact. We also describe Schatten class Toeplitz operators $\mathcal{T}_\mu:A^2_\om\to A^2_\om$ in terms of their Berezin transforms and show how this result can be used to study Schatten class composition operators induced by symbols of bounded valence.

A simple fact that is repeatedly used in the study of Toeplitz operators on standard
Bergman spaces $A^p_\a$ is the closed formula
$(1-\overline{z}\zeta)^{-(2+\alpha)}$ of the Bergman reproducing
kernel of $A^2_\alpha$. This shows that the kernels never vanish, and allows one to easily establish useful pointwise and norm estimates.
However, the situation in the case of $A^2_\om$ with $\om\in\R$ is more
complicated because of the lack of such an explicit expression for
$B^\om_z$. In fact a little perturbation in the weight, that does
not change the space itself, might introduce zeros to the kernel
functions \cite{ZeyPams10}. This difference causes severe difficulties in the study related to Toeplitz operators on $A^p_\om$, and forces us to circumvent several obstacles in a different manner. We will shortly indicate the main tools used in the proofs after each result is stated.

We need a bit more of notation to state our first result. For each $1<p<\infty$ we write~$p'$ for its conjugate
exponent, that is, $\frac{1}{p}+\frac{1}{p'}=1$. The Carleson square $S(I)$ based on an interval $I$ on the boundary $\T$ of $\D$ is the
set $S(I)=\{re^{it}\in\D:e^{it}\in I,\, 1-|I|\le r<1\}$, where
$|E|$ denotes the Lebesgue measure of $E\subset\T$. We associate to
each $a\in\D\setminus\{0\}$ the interval $I_a=\{e^{i\t}:|\arg(a
e^{-i\t})|\le\frac{1-|a|}{2}\}$, and denote $S(a)=S(I_a)$.

\begin{theorem}\label{th:tubounpq}
Let $1<p\le q<\infty$, $\om\in\R$ and $\mu$ be a positive Borel
measure on $\D$. Then the following statements are equivalent:
    \begin{enumerate}
    \item[(i)] $\mathcal{T}_\mu:A^p_\om\to A^q_\om$ is bounded;
    \item[(ii)] $\frac{\widetilde{\mathcal{T}}_\mu(\cdot)}{\om(S(\cdot))^{\frac{1}{p}+\frac{1}{q'}-1}}\in L^\infty$;
    \item[(iii)] $\mu$ is a $\frac{s(p+q')}{pq'}$-Carleson measure for $A^s_\om$ for some (equivalently for all)
    $0<s<\infty$;
    \item[(iv)]
    $\frac{\mu(S(\cdot))}{\om(S(\cdot))^{\frac1p+\frac1{q'}}}\in L^\infty.$
    \end{enumerate}
Moreover,
    $$
    \left\|\Tm\right\|_{A^p_\omega \to A^q_\omega}
    \asymp\left\|\frac{\widetilde{\mathcal{T}}_\mu(\cdot)}{\om(S(\cdot))^{\frac{1}{p}+\frac{1}{q'}-1}}\right\|_{L^\infty}
    \asymp\left\|Id\right\|^{\frac{s(p+q')}{pq'}}_{A^s_\omega \to L^{\frac{s(p+q')}{pq'}}_\mu}
    \asymp \left\|\frac{\mu(S(\cdot))}{\om(S(\cdot))^{\frac1p+\frac1{q'}}}\right\|_{L^\infty}.
    $$
\end{theorem}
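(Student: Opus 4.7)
I would argue along the cycle $\text{(i)} \Rightarrow \text{(ii)} \Rightarrow \text{(iv)} \Leftrightarrow \text{(iii)} \Rightarrow \text{(i)}$, tracking constants so the norm chain at the bottom of the statement falls out automatically.

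\emph{Step 1: $\text{(iii)} \Leftrightarrow \text{(iv)}$.} This is a direct invocation of the Carleson measure characterization for doubling (hence regular) weights in \cite{PelRatEmb}. Setting $t=s(p+q')/(pq')$, the hypothesis $p\le q$ ensures $t\ge s$, so the cited result gives $\mu$ is a $t$-Carleson measure for $A^s_\omega$ precisely when $\mu(S(a))\lesssim\omega(S(a))^{t/s}=\omega(S(a))^{1/p+1/q'}$, and the embedding constant equals, up to constants, $\|\mu(S(\cdot))/\omega(S(\cdot))^{1/p+1/q'}\|_{L^\infty}$. Independence of $s$ is built in.

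\emph{Step 2: $\text{(iv)} \Rightarrow \text{(i)}$.} By Fubini and the reproducing identity, for $f\in A^p_\omega$ and $g\in A^{q'}_\omega$,
\[
\langle \mathcal{T}_\mu f, g\rangle_{A^2_\omega}=\int_\D f\,\overline{g}\,d\mu.
\]
Splitting by H\"older with $r=(p+q')/q'$ and $r'=(p+q')/p$, then embedding $A^p_\omega\hookrightarrow L^r_\mu$ and $A^{q'}_\omega\hookrightarrow L^{r'}_\mu$ via Step~1 (both embeddings reduce to the same geometric condition (iv), and one checks $p\le r$, $q'\le r'$ iff $p\le q$), one obtains $|\langle\mathcal{T}_\mu f,g\rangle_{A^2_\omega}|\lesssim \|f\|_{A^p_\omega}\|g\|_{A^{q'}_\omega}$. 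The duality $(A^q_\omega)^*\simeq A^{q'}_\omega$ under the $A^2_\omega$ pairing, which holds for regular $\omega$ because $P_\omega$ is bounded on $L^q_\omega$ \cite{Twoweight}, converts this into $\|\mathcal{T}_\mu f\|_{A^q_\omega}\lesssim \|f\|_{A^p_\omega}$.

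\emph{Step 3: $\text{(i)} \Rightarrow \text{(ii)}$.} Directly from the definition, $|\widetilde{\mathcal{T}}_\mu(z)|=|\langle \mathcal{T}_\mu b^\omega_z,b^\omega_z\rangle_{A^2_\omega}|\le \|\mathcal{T}_\mu b^\omega_z\|_{A^q_\omega}\|b^\omega_z\|_{A^{q'}_\omega}\le \|\mathcal{T}_\mu\|_{A^p_\omega\to A^q_\omega}\|b^\omega_z\|_{A^p_\omega}\|b^\omega_z\|_{A^{q'}_\omega}$. Combined with the norm asymptotic $\|b^\omega_z\|_{A^r_\omega}\asymp\omega(S(z))^{1/r-1/2}$ for regular weights (a known consequence of the estimates $\|B^\omega_z\|^r_{A^r_\omega}\asymp\omega(S(z))^{1-r}$ and $B^\omega_z(z)\asymp 1/\omega(S(z))$), this yields $\widetilde{\mathcal{T}}_\mu(z)\lesssim\|\mathcal{T}_\mu\|\,\omega(S(z))^{1/p+1/q'-1}$.

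\emph{Step 4: $\text{(ii)} \Rightarrow \text{(iv)}$, and main obstacle.} Write $\widetilde{\mathcal{T}}_\mu(z)=\int_\D|b^\omega_z|^2\,d\mu$. The plan is to secure a pointwise lower bound $|b^\omega_z(\zeta)|^2\gtrsim 1/\omega(S(z))$ for $\zeta$ in a pseudo-hyperbolic disc $\Delta(z,\rho_0)$ of small fixed radius, giving $\widetilde{\mathcal{T}}_\mu(z)\gtrsim\mu(\Delta(z,\rho_0))/\omega(S(z))$; condition~(ii) then delivers $\mu(\Delta(z,\rho_0))\lesssim\omega(S(z))^{1/p+1/q'}$, after which a standard covering argument, leaning on the doubling structure of $\omega$, passes from pseudo-hyperbolic discs to Carleson squares $S(a)$. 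The hard part is precisely the difficulty flagged in the introduction: because $B^\omega_z$ can vanish for regular $\omega$, the desired lower bound near $z$ is not automatic. The plan is to obtain it from subharmonic/derivative estimates on $B^\omega_z$ (using $B^\omega_z(z)\asymp 1/\omega(S(z))$ together with Cauchy-type gradient bounds and the doubling of $\omega$), or, failing that, to replace $b^\omega_z$ by a non-vanishing surrogate with mass still concentrated near $z$ (for instance, a normalized kernel in an auxiliary regular weight). Once Step~4 is in place, chasing constants through all four implications produces the displayed norm equivalences.
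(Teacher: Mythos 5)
Your cycle (i)$\Rightarrow$(ii)$\Rightarrow$(iv)$\Leftrightarrow$(iii)$\Rightarrow$(i) and Steps 1--3 coincide with the paper's argument: (iii)$\Leftrightarrow$(iv) is the citation of the Carleson embedding theorem, (iv)$\Rightarrow$(i) is the same H\"older-plus-duality computation with the exponents $\frac{p+q'}{q'}$ and $\frac{p+q'}{p}$, and (i)$\Rightarrow$(ii) is the same kernel-norm estimate. The only place you genuinely deviate is Step 4, and that is also the only place where your write-up is a plan rather than a proof, so let me focus there. The pointwise lower bound you want does exist and is proved in the paper exactly along the lines you sketch (triangle inequality from $B^\om_z(z)=\|B^\om_z\|_{A^2_\om}^2\gtrsim 1/\om(S(z))$ plus a derivative bound $|(B^\om_z)'|\lesssim (\om(S(z))(1-|z|))^{-1}$ coming from the coefficient estimates); see Lemmas~\ref{le:kersquare} and~\ref{b6}. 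So that part of your plan is sound and needs no surrogate kernel.

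The subtlety you have glossed over is the passage from the local estimate to condition (iv). If you prove the lower bound on a pseudohyperbolic disc $\Delta(z,\r_0)$, you obtain $\mu(\Delta(z,\r_0))\lesssim\om(S(z))^{\lambda}$ with $\lambda=\frac1p+\frac1{q'}$, and you must then cover a Carleson square $S(a)$ by such discs. That cover is necessarily infinite: it requires roughly $2^k$ discs at each scale $1-|z|\asymp 2^{-k}(1-|a|)$, and the resulting bound is $\mu(S(a))\lesssim (1-|a|)^\lambda\sum_k 2^{k(1-\lambda)}\widehat{\om}(1-2^{-k}(1-|a|))^\lambda$. Since $\lambda=1$ when $p=q$, plain doubling of $\om$ (which only gives $\widehat{\om}(1-2^{-k}(1-|a|))\le\widehat{\om}(a)$) does not make this sum converge; you need the reverse doubling estimate $\widehat{\om}(1-2^{-k}(1-|a|))\lesssim 2^{-k\b}\widehat{\om}(a)$ of Lemma~\ref{Lemma:weights-in-R}(ii), which does hold for $\om\in\R$ but is a different (and for general $\om\in\DD$ strictly stronger) property than doubling. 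The paper avoids this entirely by proving the kernel lower bound on a small Carleson square $S(z_\d)$ with $1-|z_\d|=\d(1-|z|)$ (Lemma~\ref{le:kersquare}): then every $S(a)$ \emph{is} such a square for a suitable $z$ with $1-|z|\asymp 1-|a|$, and a single application of the doubling Lemma~\ref{Lemma:replacement-Lemmas-Memoirs} gives $\om(S(z))\asymp\om(S(a))$, finishing (ii)$\Rightarrow$(iv) without any infinite covering. Your route can be completed for $\om\in\R$, but you must invoke reverse doubling explicitly at the endpoint $p=q$; as written, ``leaning on the doubling structure'' is not enough.
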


The equivalence between (ii) and (iv) shows that the Berezin transform $\widetilde{\mathcal{T}}_\mu$ behaves asymptotically as the average $\mu(S(\cdot))/\om(S(\cdot))$. By using Fubini's theorem and the reproducing formula
 \begin{equation}\label{repfor}
    L_z(f)=f(z)=\langle f, B^\om_{z}\rangle_{A^2_\om} =\int_{\D} f(\z)\,\overline{B^\om_{z}(\z)}\,\om(\z)\,dA(\z), \quad f\in A^1_\om,
    \end{equation}
we deduce
    \begin{equation}\label{tmu}
    \langle \mathcal{T}_\mu(f), g\rangle_{A^2_\om}=\langle f,
    g\rangle_{L^2_\mu}
    \end{equation}
for each compactly supported positive Borel measure $\mu$ and all $f,g\in A^2_\om$. This identity shows that Carleson measures and Toeplitz operators are intimately connected, and thus the use of Carleson measures in the proof of Theorem~\ref{th:tubounpq} does not come as a surprise. Another key tools in the proof are the $L^p$-estimates of the kernels $B^\om_z$, obtained in~\cite[Theorem~1]{Twoweight}, and
a pointwise estimate for $B^\om_z$ in a sufficiently small
Carleson square contained in $S(z)$, given in Lemma~\ref{le:kersquare} below. We also prove a counterpart of Theorem~\ref{th:tubounpq} for compact Toeplitz operators. This result is stated as Theorem~\ref{th:tucompactpq} and its proof relies, among other things, on the duality relation $(A^p_\om)^\star\simeq A^{p'}_\om$ under the pairing $\langle\cdot,\cdot\rangle_{A^2_\om}$, valid for all $\om\in\R$~\cite[Corollary~7]{Twoweight}.

To describe the positive Borel measures such that
$\mathcal{T}_\mu: A^p_\om\to A^q_\om$ is bounded on the range
$1<q<p<\infty$, we write $\varrho(a,z)=|\vp_a(z)|=\left|\frac{a-z}{1-\overline{a}z}\right|$, for the pseudohyperbolic distance between $z$ and $a$,
and $\Delta(a,r)=\{z:\varrho(a,z)<r\}$ for the pseudohyperbolic disc of center $a\in\D$ and radius $r\in(0,1)$.

\begin{theorem}\label{th:qmenorp}
Let $1<q<p<\infty$, $0<r<1$, $\om\in\R$ and $\mu$ be a positive
Borel measure on $\D$. Then the following statements are equivalent:
    \begin{enumerate}
    \item[\rm(i)] $\mathcal{T}_\mu: A^p_\om\to A^q_\om$ is compact;
    \item[\rm(ii)] $\mathcal{T}_\mu: A^p_\om\to A^q_\om$ is bounded;
    \item[\rm(iii)] $\widehat{\mu}_r(\cdot)=\frac{\mu(\Delta(\cdot,r))}{\omega(\Delta(\cdot,r))}
    \in L^\frac{pq}{p-q}_\omega$;
    \item[\rm(iv)] $\mu$ is a $\left(p+1-\frac{p}{q}\right)$-Carleson
    measure for $A^p_\om$;
    \item[\rm(v)] $Id:A^p_\om\to L^{p+1-\frac{p}{q}}_\mu$ is compact;
    \item[\rm(vi)] $\widetilde{\mathcal{T}}_\mu\in L^\frac{pq}{p-q}_\omega$.
    \end{enumerate}
Moreover,
    $$
    \|\mathcal{T}_\mu\|_{A^p_\om\to A^q_\om}
    \asymp\|\widehat{\mu}_r\|_{L^{\frac{qp}{p-q}}_\om}
    \asymp\|Id\|^{p+1-\frac{p}{q}}_{A^p_\om \to L^{p+1-\frac{p}{q}}_\mu}
    \asymp\|\widetilde{\mathcal{T}}_\mu\|_{L^{\frac{qp}{p-q}}_\om}.
    $$
\end{theorem}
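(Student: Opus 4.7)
The plan is to prove the cycle (i)$\Rightarrow$(ii)$\Rightarrow$(iii)$\Leftrightarrow$(iv)$\Leftrightarrow$(v)$\Leftrightarrow$(vi)$\Rightarrow$(i) with enough control of constants that the asymptotic identity of norms at the end is obtained simultaneously. The implication (i)$\Rightarrow$(ii) is tautological. The three Carleson-type conditions (iii), (iv), (v) are equivalent by the characterization of Carleson measures for $A^p_\om$ in \cite{PelRatEmb,PelRatSie2015}: writing $s=p+1-p/q$ one has $p-s=(p-q)/q$ and hence $p/(p-s)=pq/(p-q)$, and since $s<p$ this is the small-exponent Luecking regime in which boundedness of $Id\colon A^p_\om\to L^s_\mu$ is equivalent to its compactness and to $\widehat{\mu}_r\in L^{pq/(p-q)}_\om$.

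For the Berezin equivalence (iii)$\Leftrightarrow$(vi) I would use \eqref{tmu} to write $\widetilde{\mathcal{T}}_\mu(z)=\int_\D|b^\om_z(\zeta)|^2\,d\mu(\zeta)$. The lower bound $\widehat{\mu}_r(z)\lesssim\widetilde{\mathcal{T}}_\mu(z)$ comes from the pointwise estimate $|b^\om_z(\zeta)|^2\gtrsim 1/\om(\Delta(z,r))$ on $\Delta(z,r)$, obtained from $\|B^\om_z\|_{A^2_\om}^2=B^\om_z(z)\asymp 1/\om(S(z))$ combined with Lemma~\ref{le:kersquare}. The reverse inequality in $L^{pq/(p-q)}_\om$ norm follows by covering $\D$ with an $r$-lattice $\{a_k\}$, using subharmonicity to dominate $\widetilde{\mathcal{T}}_\mu(z)\lesssim\sum_k|b^\om_z(a_k)|^2\mu(\Delta(a_k,2r))$, and integrating in $z$ against $\om\,dA$ using the $L^p$ kernel estimates of \cite[Theorem~1]{Twoweight}.

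The main technical step, and the one I expect to be hardest, is (ii)$\Rightarrow$(iii). Here I would follow Luecking's randomization idea. Using the duality $(A^q_\om)^\star\simeq A^{q'}_\om$ from \cite[Corollary~7]{Twoweight} together with \eqref{tmu}, boundedness of $\mathcal{T}_\mu$ is equivalent to the bilinear bound $|\int_\D f\bar g\,d\mu|\le C\|f\|_{A^p_\om}\|g\|_{A^{q'}_\om}$. I would test this on sums $f=\sum_k\ep_k c_k F_k$ and $g=\sum_k\delta_k d_k G_k$ with independent Rademacher sequences $\{\ep_k\},\{\delta_k\}$, a fixed $r$-lattice $\{a_k\}$, and suitably normalized powers $F_k,G_k$ of the kernels $B^\om_{a_k}$ playing the role of test functions in $A^p_\om$ and $A^{q'}_\om$ respectively. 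Taking expectations and invoking Khinchine's inequality converts the randomized sums into $\ell^2$ square functions, and localizing the resulting integrals on the pseudohyperbolic discs $\Delta(a_k,r)$ via Lemma~\ref{le:kersquare} produces a factorized discrete inequality involving $\{\mu(\Delta(a_k,r))\}$. Duality of weighted $\ell^t$ sequence spaces then forces $\{\widehat{\mu}_r(a_k)\}$ to lie in $\ell^{pq/(p-q)}$ with weights $\om(\Delta(a_k,r))$, and standard discrete-to-continuous comparison yields $\widehat{\mu}_r\in L^{pq/(p-q)}_\om$. The delicate point is running Khinchine without a closed-form expression for $B^\om_z$: one must balance the pointwise kernel lower bound from Lemma~\ref{le:kersquare} against the sharp $L^s$ kernel norms of \cite[Theorem~1]{Twoweight} so that upper and lower estimates agree throughout.

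To close the cycle, (v)$\Rightarrow$(i) uses the dual representation $\|\mathcal{T}_\mu f\|_{A^q_\om}\asymp\sup_{\|h\|_{A^{q'}_\om}\le 1}|\int_\D f\bar h\,d\mu|$ combined with H\"older's inequality with conjugate exponents $s=p+1-p/q$ and $s'$. Since $q'/(q'-s')=pq/(p-q)$, the Carleson condition on $\mu$ for $A^{q'}_\om$ with exponent $s'$ is equivalent to the one for $A^p_\om$ with exponent $s$, so under (v) both resulting factors are controlled: boundedness of $\mathcal{T}_\mu$ follows at once. For compactness, apply the same H\"older split to a weakly null sequence $f_n\in A^p_\om$: the factor $(\int|f_n|^s\,d\mu)^{1/s}$ tends to zero precisely because $Id\colon A^p_\om\to L^s_\mu$ is compact under (v), while the other factor remains uniformly bounded.
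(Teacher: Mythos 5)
Your overall architecture is close to the paper's, but two steps as written do not go through. First, in (ii)$\Rightarrow$(iii) you randomize the bilinear form with \emph{independent} Rademacher sequences $\{\ep_k\}$ and $\{\delta_k\}$ in $f$ and $g$. Since $E[\ep_k\delta_j]=E[\ep_k]E[\delta_j]=0$ for all $k,j$, the expectation of $\int_\D f\bar g\,d\mu$ vanishes identically and no diagonal sum $\sum_k c_k\bar d_k\int F_k\overline{G_k}\,d\mu$ survives; the randomization must use the \emph{same} sign sequence on both sides (the Luecking/Pau--Zhao device), or, as the paper does, one randomizes only the $A^p_\om$ test function $F=\sum_j r_j(t)c_j b^\om_{p,a_j}$ (Proposition~\ref{pr:atomicdec}), applies Khinchine to $\|\mathcal{T}_\mu F\|_{A^q_\om}^q$, and lower-bounds the resulting square function by restricting to the discs $\Delta(a_j,s)$, using subharmonicity of $|\mathcal{T}_\mu(b^\om_{p,a_j})|^q$ and the two-sided estimate $|B^\om_{a_j}|\asymp B^\om_{a_j}(a_j)$ on $\Delta(a_j,s)$ from Lemma~\ref{b6} (not Lemma~\ref{le:kersquare}, which lives on Carleson squares $S(a_\delta)$ and would only control $\mu(S(z_\delta))$, not $\widehat\mu_r(z)$). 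Since Lemma~\ref{b6} only works for $s\le r(\om)$, a separate covering argument is then needed to upgrade the discrete inequality to lattices of arbitrary radius, which is how the paper gets statement (iii) for every $0<r<1$.

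Second, in the direction (iii)$\Rightarrow$(vi) the phrase ``integrating in $z$ against $\om\,dA$ using the $L^p$ kernel estimates'' hides the real ingredient. After the subharmonicity step one arrives at the pointwise bound $\widetilde{\mathcal{T}}_\mu(z)\lesssim\int_\D|b^\om_z(\z)|^2\widehat\mu_r(\z)\om(\z)\,dA(\z)$, and to pass to the $L^{pq/(p-q)}_\om$ norm of the left-hand side one needs the operator $f\mapsto\int_\D|b^\om_z(\z)|^2f(\z)\om(\z)\,dA(\z)$ to be bounded on $L^{pq/(p-q)}_\om$; a plain Fubini or H\"older does not suffice when $pq/(p-q)>1$. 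The paper closes this by the estimate $|b^\om_z(\z)|^2\le\|B^\om_z\|_{H^\infty}|B^\om_z(\z)|/\|B^\om_z\|_{A^2_\om}^2\asymp\om(S(z))|B^\om_z(\z)|$ (Lemma~\ref{Blochnorm}) and the boundedness of the maximal Bergman projection $P^+_\om$ on $L^{pq/(p-q)}_\om$ from \cite[Theorem~5]{Twoweight}; without invoking that result (or a Schur test substitute for it) this implication is not proved. The remaining pieces of your plan are fine and essentially match the paper: (iii)$\Leftrightarrow$(iv)$\Leftrightarrow$(v) by the known Carleson-measure theory in the regime $q<p$, and (iv)$\Rightarrow$(ii) by the H\"older split with $x=p+1-p/q$ and $x/p=x'/q'$. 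Your route to compactness, extracting it from (v) via a weakly null sequence, is a legitimate alternative to the paper's shortcut through Pitt's theorem ($A^p_\om\simeq\ell^p$ and every bounded operator $\ell^p\to\ell^q$, $q<p$, is compact).
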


Apart from standard techniques, such as a duality relation for Bergman spaces and the use of Rademacher functions along with Khinchine's inequality, the boundedness of the maximal Bergman projection
    $$
    P^+_\om(f)(z)=\int_{\D}|f(\z)||B^\om_{z}(\z)|\,\om(\z)dA(\z)
    $$
on $L^p_\om$ for $p\in(1,\infty)$ and $\om\in\R$~\cite[Theorem~5]{Twoweight} plays a crucial role in the proof of Theorem~\ref{th:qmenorp}. Another important fact employed is that, even if the kernels may vanish, by Lemma~\ref{b6} for each $\om\in\DD$ they obey the relation $|B^\om_a|\asymp B^\om_a(a)$ on sufficiently small pseudohyperbolic discs centered at $a$. This is used when (iii) is considered, but (iii) involves pseudohyperbolic discs of all sizes, and therefore a suitably chosen covering of~$\D$ will be used to deal with this technical obstacle.

As for the statements of our results on Schatten classes, some notation are in order. The polar rectangle associated with an arc $I\subset\T$
is
    $$
    R(I)=\left\{z\in\D:\,\frac{z}{|z|}\in I,\,\,1-\frac{|I|}{2\pi}\le |z|<1-\frac{|I|}{4\pi}\right\}.
    $$
Write $z_I=(1-|I|/2\pi)\xi$, where $\xi\in\T$ is the midpoint of
$I$. Let $\Upsilon$ denote the family of all dyadic arcs of $\T$.
Every arc $I\in\Upsilon$ is of the form
    $$
    I_{n,k}=\left\{e^{i\theta}:\,\frac{2\pi k}{2^n}\le
    \theta<\frac{2\pi(k+1)}{2^n}\right\},
    $$
where $k=0,1,2,\dots,2^n-1$ and $n=\N\cup\{0\}$. The family
$\left\{R(I):\,\,I\in\Upsilon\right\}$ consists of pairwise disjoint
rectangles whose union covers~$\D$. For
$I_j\in\Upsilon\setminus\{I_{0,0}\}$, we will write $z_j=z_{I_{j}}$.
For convenience, we associate the arc $I_{0,0}$ with the point
$1/2$. Given a radial weight $\om$, we write
    $$
    \omega^\star(z)=\int_{|z|}^1\omega(s)\log\frac{s}{|z|}s\,ds,\quad z\in\D\setminus\{0\}.
    $$

\begin{theorem}\label{MTMSchatten}
Let $0<p<\infty$, $\om\in\DD$ and $\mu$ be a positive Borel measure
on $\D$. Then the following statements are equivalent:
\begin{enumerate}
\item[\rm(i)] $\mathcal T_\mu$ belongs to the Schatten $p$-class $\SSS_p(A^2_\om)$;
\item[\rm(ii)]  $\sum_{R_j\in\Upsilon}
    \left(\frac{\mu(R_j)}{\omega^\star(z_j)}\right)^p<\infty$;
\item[\rm(iii)] $\frac{\mu\left(\Delta(\cdot,r)\right)}{\om^\star(\cdot)}$ belongs to $L^p\left(\frac{dA}{(1-|\cdot|)^2}\right)$ for some (equivalently for all) $0<r<1$.
\end{enumerate}
Moreover,
    $$
    |\mathcal{T}_\mu|_p^p\asymp\sum_{R_j\in\Upsilon}
    \left(\frac{\mu(R_j)}{\omega^\star(z_j)}\right)^p
    \asymp\int_\D\left(\frac{\mu\left(\Delta(z,r)\right)}{\om^\star(z)}\right)^p\frac{dA(z)}{(1-|z|)^2}.
    $$
If $\om\in\R$ such that
$\frac{(\om^\star(\cdot))^p}{(1-|\cdot|)^2}$ is also a regular weight, then
$\mathcal T_\mu\in\SSS_p(A^2_\om)$ if and only if
$\widetilde{\mathcal{T}}_\mu\in L^p_{\om/\om^\star}$, and $|\mathcal{T}_\mu|_p\asymp\|\widetilde{\mathcal{T}}_\mu\|_{L^p_{\om/\om^\star}}$.
\end{theorem}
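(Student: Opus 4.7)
My plan is to split Theorem~\ref{MTMSchatten} into three blocks: the measure-theoretic equivalence (ii)$\Leftrightarrow$(iii), the Schatten-class equivalence (i)$\Leftrightarrow$(ii), and the Berezin-transform characterization under the additional regularity hypothesis. The analytic backbone is the pair of kernel estimates $\|B^\om_z\|^2_{A^2_\om}\asymp 1/\om^\star(z)$ for $\om\in\DD$ together with the local pointwise comparison $|B^\om_z(\z)|\asymp B^\om_z(z)\asymp 1/\om^\star(z)$ on $\Delta(z,r_0)$ for any fixed small $r_0>0$, both coming from Lemma~\ref{b6} and the surrounding material. I also rely on the comparability of $\om^\star$ and $1-|\cdot|$ to their values at $z_j$ on each $R_j$, and on the fact that any pseudohyperbolic disc intersects only a bounded number of the rectangles $R_j$.

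Step~1, (ii)$\Leftrightarrow$(iii), is purely geometric: for $z\in R_j$ one has $\mu(\Delta(z,r))\asymp\sum_{k\sim j}\mu(R_k)$, $\om^\star(z)\asymp\om^\star(z_j)$, and $\int_{R_j}dA(z)/(1-|z|)^2\asymp 1$, so the sum in (ii) and the integral in (iii) differ only by absolute constants. Step~2 handles (i)$\Leftrightarrow$(ii). Decompose $\mu=\sum_j\mu_j$ with $\mu_j=\mu|_{R_j}$ and $\mathcal{T}_\mu=\sum_j\mathcal{T}_{\mu_j}$. Since $R_j$ has bounded hyperbolic diameter and $B^\om_z(\z)$ is nearly constant in $\z\in R_j$, each $\mathcal{T}_{\mu_j}\ge 0$ is essentially rank one with leading eigenvalue $\asymp\mu(R_j)/\om^\star(z_j)$, as confirmed by the trace computation $\mathrm{tr}(\mathcal{T}_{\mu_j})=\int_{R_j}B^\om_\z(\z)\,d\mu(\z)\asymp\mu(R_j)/\om^\star(z_j)$; in particular $|\mathcal{T}_{\mu_j}|_p\asymp\mu(R_j)/\om^\star(z_j)$ for every $p>0$. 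For $0<p\le 1$ the $p$-triangle inequality in $\SSS_p$ gives $|\mathcal{T}_\mu|_p^p\le\sum_j|\mathcal{T}_{\mu_j}|_p^p\asymp\sum_j(\mu(R_j)/\om^\star(z_j))^p$. For $p>1$ a direct triangle inequality is insufficient, so I instead factor $\mathcal{T}_\mu=V_\mu^* V_\mu$ with $V_\mu\colon A^2_\om\to L^2_\mu$ and estimate $|V_\mu|_{2p}^2$ via an atomic/sparse argument combined with an almost-orthogonality bound for $\{\mathcal{T}_{\mu_j}\}$ driven by the off-diagonal decay of~$B^\om_z$. Conversely (i)$\Rightarrow$(ii) follows from positivity of $\mathcal{T}_\mu$ and a Luecking-type sampling lemma (valid for $\om\in\DD$ thanks to our kernel estimates) stating $|T|_p^p\gtrsim\sum_j\langle Tb^\om_{z_j},b^\om_{z_j}\rangle^p$ along any sufficiently separated lattice, together with the lower bound $\langle\mathcal{T}_\mu b^\om_{z_j},b^\om_{z_j}\rangle=\int|b^\om_{z_j}|^2d\mu\ge\int_{\Delta(z_j,r_0)}|b^\om_{z_j}|^2d\mu\gtrsim\mu(R_j)/\om^\star(z_j)$.

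For Step~3, I first observe that $\om\in\R$ forces $\om^\star(z)\asymp\om(z)(1-|z|)^2$ by an integration-by-parts computation based on $\widehat\om(r)\asymp\om(r)(1-r)$; hence $\om/\om^\star\asymp(1-|\cdot|)^{-2}$, and the norms $\|\cdot\|_{L^p_{\om/\om^\star}}$ and $\|\cdot\|_{L^p(dA/(1-|\cdot|)^2)}$ are equivalent. In view of (iii) the task reduces to the equivalence $\|\widetilde{\mathcal{T}}_\mu\|_{L^p(dA/(1-|\cdot|)^2)}\asymp\|\mu(\Delta(\cdot,r))/\om^\star(\cdot)\|_{L^p(dA/(1-|\cdot|)^2)}$. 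The $\gtrsim$ direction is immediate from $\widetilde{\mathcal{T}}_\mu(z)\ge\int_{\Delta(z,r)}|b^\om_z|^2d\mu\gtrsim\mu(\Delta(z,r))/\om^\star(z)$. For the $\lesssim$ direction, decompose the defining integral of $\widetilde{\mathcal{T}}_\mu(z)$ over the rectangles $R_k$ and bound the restricted integrand via an off-diagonal estimate of $|B^\om_z|^2$; this recasts the problem as boundedness of an integral operator on $L^p(dA/(1-|\cdot|)^2)$ whose kernel is determined by~$\om^\star$, and that is precisely what the hypothesis that $(\om^\star)^p/(1-|\cdot|)^2$ is a regular weight buys, via a Schur-type test calibrated to this new regular weight.

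The hard part throughout is the absence of a closed-form expression for $B^\om_z$: every step that would be a short calculation in the standard-weight case must be replaced by a combination of the $L^p$-kernel estimates from~\cite{Twoweight} with the localized pointwise comparison of Lemma~\ref{b6}. The single most delicate point is the off-diagonal kernel estimate used in the cross-term control in Step~2 for $p>1$ and in the $\lesssim$ direction in Step~3, and it is precisely to make this off-diagonal analysis work out in the Berezin-transform characterization that the extra hypothesis on $(\om^\star)^p/(1-|\cdot|)^2$ is imposed.
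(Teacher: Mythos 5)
Your overall architecture is sensible, and parts of it are sound: the pointwise lower bound $\widetilde{\mathcal{T}}_\mu(z)\gtrsim\mu(\Delta(z,r))/\om^\star(z)$ for $r\le r(\om)$ is exactly the paper's \eqref{eq:viddu}, and the reduction $\om/\om^\star\asymp(1-|\cdot|)^{-2}$ for $\om\in\R$ is correct. Note, however, that the paper does not prove (i)$\Leftrightarrow$(ii)$\Leftrightarrow$(iii) at all; it imports this from \cite[Theorem~1]{PR2016/2} and only proves the Berezin-transform part. Your Step~2, which attempts the imported equivalence, has genuine gaps. First, $\mathcal{T}_{\mu_j}$ with $\mu_j=\mu|_{R_j}$ is not rank one, and for $0<p<1$ the trace and the operator norm only give a \emph{lower} bound on $|\mathcal{T}_{\mu_j}|_p^p=\sum_n\lambda_n^p$ (since $\lambda_n^p\ge\lambda_n\|\mathcal{T}_{\mu_j}\|^{p-1}$), so the claim $|\mathcal{T}_{\mu_j}|_p\asymp\mu(R_j)/\om^\star(z_j)$ for every $p>0$ is unjustified precisely where it is needed. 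Second, the ``almost-orthogonality bound driven by the off-diagonal decay of $B^\om_z$'' has no support in this framework: for $\om\in\DD$ the paper establishes only \emph{local} pointwise comparisons (Lemmas~\ref{le:kersquare} and~\ref{b6}), the $H^\infty$/Bloch bounds of Lemma~\ref{Blochnorm}, and the integrated norm estimates of Theorem~\ref{Lemma:kernels}; no global pointwise off-diagonal decay is available, and the kernels may even vanish. Third, the Luecking-type sampling inequality $|T|_p^p\gtrsim\sum_j\langle Tb^\om_{z_j},b^\om_{z_j}\rangle^p$ for $0<p<1$ is itself a substantial theorem requiring the atomic-decomposition machinery, not a corollary of the kernel estimates.

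In Step~3 your route diverges from the paper's, and for $p\ge1$ it is viable and arguably more elementary: the Schur test with constant test functions does close, since $\int_\D|B^\om_z(\z)|^2\om^\star(\z)\,\frac{dA(\z)}{(1-|\z|)^2}\asymp\|B^\om_z\|_{A^2_\om}^2\asymp1/\om^\star(z)$, whereas the paper argues operator-theoretically via positivity (Lemma~\ref{b3} and \cite[Proposition~1.31]{Zhu} in one direction; Propositions~\ref{b4} and~\ref{b5} plus domination of positive operators in the other). The gap is at $0<p<1$, which is the only range where the hypothesis $(\om^\star)^p/(1-|\cdot|)^2\in\R$ is genuinely needed: a Schur-type test is a H\"older-based device and does not apply for $p<1$, and the ``off-diagonal estimate of $|B^\om_z|^2$'' you invoke again does not exist pointwise. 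The paper's actual mechanism in \eqref{eq:chumino} and the computation following it is: $\ell^p$-subadditivity over the dyadic rectangles, replacement of $\sup_{\z\in R_j}|B^\om_z(\z)|^{2p}$ by an average over a pseudohyperbolic disc via subharmonicity, a Fubini exchange, and then the evaluation of $\int_\D|B^\om_\z(z)|^{2p}\om^\star(z)^p(1-|z|)^{-2}\,dA(z)$ by Theorem~\ref{Lemma:kernels} with $\nu=(\om^\star)^p/(1-|\cdot|)^2$; the regularity (in particular the reverse doubling) of this $\nu$ is exactly what yields $\int_t^1\nu(s)\,ds\asymp\om^\star(t)^p/(1-t)$ and hence the bound $\asymp\om^\star(\z)^{-p}$. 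Your sketch identifies neither subharmonicity nor this norm estimate as the engine, so the hard half of the Berezin characterization for $0<p<1$ remains unproved.
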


The equivalence of the first three statements
were proved in \cite[Theorem~1]{PR2016/2}, and hence the novelty of Theorem~\ref{MTMSchatten} stems from the last part involving the
Berezin transform. The hypothesis $\frac{\omega^\star(\cdot)^p}{(1-|\cdot|)^2}\in\R$ is not a restriction
for $p\ge 1$, and for $\om(z)=(1-|z|^2)^\alpha$ it reduces to the inequality
$p(2+\a)>1$. Therefore Theorem~\ref{MTMSchatten} is an extension of
\cite[Theorem~7.18]{Zhu}, see also \cite{ZhuNY07}. Since each
standard weight is regular, the cut-off condition
$\frac{\om^\star(\cdot)^p}{(1-|\cdot|)^2}\in\R$ is in a sense the best possible.

The proof of the last statement of Theorem~\ref{MTMSchatten} for
$p\ge 1$ follows by standard techniques once the pointwise kernel estimate given in Lemma~\ref{b6} is available. However, the proof for $0<p<1$
is more involved because the reproducing kernels of $A^2_\om$ with $\om\in\R$ do not necessarily remain essentially constant in hyperbolically
bounded regions, a property which the standard kernels
$(1-\overline{z}\zeta)^{2+\alpha}$ trivially admit and is used in the proof of
\cite[Theorem~7.18]{Zhu} concerning the weighted Bergman spaces $A^p_\alpha$. This obstacle is circumvent by using
subharmonicity and estimates for the $A^p_\nu$-norm of $B^\om_z$ for doubling weights
$\om,\nu\in\DD$, obtained in \cite[Theorem~1]{Twoweight}.

Theorem~\ref{MTMSchatten} can be applied to study Schatten class composition operators when the inducing symbol $\vp$ is of finite valence. To state the result, some more notation and motivation are in order. For an analytic self-map $\vp$ of $\D$, let $\z\in\vp^{-1}(z)$ denote the set of the points $\{\z_n\}$ in
$\D$, organized by increasing moduli and repeated according to their multiplicities, such that $\vp(\z_n)=z$ for
all $n$.
For a radial weight $\om$ and $\vp$ as above, the generalized Nevanlinna counting function is
    $$
    N_{\vp,\om^\star}(z)=\sum_{\z\in\vp^{-1}(z),}\om^\star\left(\z\right),\quad z\in\D\setminus\{\vp(0)\}.
    $$
In \cite[Theorem~3]{PR2016/2} it was shown that, for each $\om\in\DD$, the composition operator $C_\vp$ belongs to the Schatten $p$-class $\SSS_p(A^2_\omega)$ if and only if $N_{\vp,\omega^\star}\in L^p\left( \frac{dA}{(1-|\cdot|)^2 }\right)$. This condition might be difficult to test in praxis because of the
counting function $N_{\vp,\omega^\star}$. Therefore it is natural to look
for more workable descriptions. As for this, we observe that by using \cite[Theorem~1]{Twoweight} one can show that the Berezin transform
of $C_\vp C_\vp^\star$ behaves asymptotically as
$\frac{\omega^\star(\cdot)}{\omega^\star(\vp(\cdot))}$, and moreover, the condition $\frac{\omega^\star(z)}{\omega^\star(\vp(z))}\to0$, $|z|\to1^-$,
characterizes compact operators $C_\vp:A^2_\om\to A^2_\om$ when $\om\in\R$ by
\cite[Theorem~20 and Lemma~23]{PR2016/2}. Therefore one may ask
how close is the condition
    \begin{equation}\label{39intro}
    \int_\D\left(\frac{\omega^\star(z)}{\omega^\star(\vp(z))}\right)^\frac{p}{2}\frac{\omega(z)}{\omega^\star(z)}\,dA(z)<\infty
    \end{equation}
to describe Schatten class composition operators? The next result
shows that this is a description in the case $p>2$ under the hypothesis of $\vp$ being of bounded
valence.

\begin{theorem}\label{Thm:CompositionBoundedValence}
Let $2<p<\infty$ and $\omega\in\R$, and let $\vp$ be a bounded
valent analytic self-map of $\D$. Then $C_\vp\in\SSS_p(A^2_\omega)$
if and only if \eqref{39intro} holds.
\end{theorem}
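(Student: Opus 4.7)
My plan is to establish the two directions separately. The forward implication $C_\vp\in\SSS_p(A^2_\om)\Rightarrow\eqref{39intro}$ is handled by a general positive-operator argument using the Berezin transform of $C_\vp C_\vp^\star$, while the reverse uses the Nevanlinna-type characterisation of \cite[Theorem~3]{PR2016/2} together with the bounded valence hypothesis.

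For the forward direction I would first observe that $p>2$ gives $p/2>1$, and that $C_\vp\in\SSS_p(A^2_\om)$ is equivalent to the positive operator $T:=C_\vp C_\vp^\star\in\SSS_{p/2}(A^2_\om)$. A short reproducing-kernel computation based on \eqref{repfor} shows $C_\vp^\star b^\om_z=B^\om_{\vp(z)}/\|B^\om_z\|_{A^2_\om}$, so
\[
\widetilde T(z)=\|C_\vp^\star b^\om_z\|^2_{A^2_\om}=\frac{\|B^\om_{\vp(z)}\|^2_{A^2_\om}}{\|B^\om_z\|^2_{A^2_\om}}\asymp\frac{\om^\star(z)}{\om^\star(\vp(z))},
\]
where the last step uses the kernel norm estimate $\|B^\om_z\|^2_{A^2_\om}=B^\om_z(z)\asymp 1/\om^\star(z)$ from \cite[Theorem~1]{Twoweight}. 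Combining the coherent-state trace formula $\mathrm{tr}(S)=\int_\D\widetilde S(z)\|B^\om_z\|^2\om\,dA$, valid for positive trace-class $S$, with the operator Jensen inequality $\widetilde S(z)^q\le\widetilde{S^q}(z)$ (for $q\ge 1$ and $S\ge 0$), and taking $S=T$, $q=p/2$, one obtains
\[
\int_\D\widetilde T(z)^{p/2}\,\|B^\om_z\|^2\,\om(z)\,dA(z)\le\mathrm{tr}(T^{p/2})=|C_\vp|_p^p<\infty,
\]
which, upon noting that $\|B^\om_z\|^2\om\asymp\om/\om^\star$, is precisely~\eqref{39intro}.

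For the reverse direction, \cite[Theorem~3]{PR2016/2} reduces the membership $C_\vp\in\SSS_p(A^2_\om)$ to $N_{\vp,\om^\star}\in L^p(dA/(1-|\cdot|)^2)$. Let $M\in\N$ be a valence bound for $\vp$. For $p\ge 1$, the discrete Jensen inequality gives $N_{\vp,\om^\star}(z)^p\lesssim\sum_{\z\in\vp^{-1}(z)}\om^\star(\z)^p$ with a constant depending only on $M$ and $p$, and the non-injective change of variables formula for analytic maps (with Jacobian $|\vp'|^2$) then yields
\[
\int_\D N_{\vp,\om^\star}(z)^p\,\frac{dA(z)}{(1-|z|)^2}\lesssim\int_\D\om^\star(\z)^p\,\frac{|\vp'(\z)|^2}{(1-|\vp(\z)|)^2}\,dA(\z).
\]
Combining the asymptotic $\om^\star(\z)\asymp(1-|\z|)^2\om(\z)$ valid for $\om\in\R$ (from integration by parts in the definition of~$\om^\star$) with the Schwarz--Pick estimate $|\vp'(\z)|(1-|\z|)\le 2(1-|\vp(\z)|)$, and carefully tracking exponents (where $p>2$ enters crucially), the last integrand is dominated pointwise by a constant multiple of $(\om^\star(\z)/\om^\star(\vp(\z)))^{p/2}\om(\z)/\om^\star(\z)$, so that the assumption~\eqref{39intro} closes the argument.

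The main obstacle is the pointwise comparison in the reverse direction: the ratio between the two integrands turns out to be comparable to $\om^\star(\z)^{p/2}\om^\star(\vp(\z))^{p/2}$, which is bounded while $\z$ and $\vp(\z)$ stay away from the origin but blows up near these points. The contribution to the integral from a neighbourhood of the origin (and of $\vp^{-1}(0)$) must therefore be controlled separately, using that $\om^\star$ has only a mild logarithmic singularity at $0$. The hypothesis $p>2$ plays a double role: it guarantees $p/2\ge 1$, needed for operator Jensen's inequality in the forward direction, and it makes the exponent $p-2$ appearing in the pointwise estimate non-negative; bounded valence is essential only in the reverse direction, where it allows the discrete Jensen inequality to yield an estimate with constants depending only on $M$.
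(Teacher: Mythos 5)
Your forward direction is correct and is essentially the paper's own argument: the computation $\widetilde{C_\vp C_\vp^\star}(z)=\|C_\vp^\star b^\om_z\|^2_{A^2_\om}\asymp\om^\star(z)/\om^\star(\vp(z))$, combined with the trace formula of Theorem~\ref{b1} and the inequality $\langle Sx,x\rangle^{q}\le\langle S^qx,x\rangle$ for positive $S$ and $q=p/2\ge1$, is exactly Proposition~\ref{Thm:CompositionSufficientNecessary} via Lemma~\ref{b3}(i).

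The reverse direction has a genuine gap. You quote \cite[Theorem~3]{PR2016/2} as ``$C_\vp\in\SSS_p(A^2_\om)$ iff $N_{\vp,\om^\star}\in L^p\bigl(dA/(1-|\cdot|)^2\bigr)$'' and then estimate $\int_\D N_{\vp,\om^\star}(z)^p\,dA(z)/(1-|z|)^2$. The actual characterization --- the one used in the proof of Proposition~\ref{Thm:Composition-Extra}; the phrasing in the introduction is an abuse of notation --- is
\begin{equation*}
\int_\D\Bigl(\frac{N_{\vp,\om^\star}(z)}{\om^\star(z)}\Bigr)^{p/2}\frac{dA(z)}{(1-|z|)^2}<\infty,
\end{equation*}
with the normalization by $\om^\star$ and the exponent $p/2$. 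The condition you target is not equivalent to Schatten membership: for $\vp=\mathrm{id}$ one has $N_{\vp,\om^\star}=\om^\star\lesssim(1-|\cdot|)$, so $\int_\D N_{\vp,\om^\star}^p\,dA/(1-|\cdot|)^2<\infty$ for every $p>2$, yet the identity is not even compact. So completing your chain would not yield $C_\vp\in\SSS_p(A^2_\om)$. The exponent error is also what manufactures your ``main obstacle'': with the correct exponent, the discrete Jensen inequality $N_{\vp,\om^\star}^{p/2}\le M^{p/2-1}\sum_{\z\in\vp^{-1}(\cdot)}\om^\star(\z)^{p/2}$ (this is where bounded valence and $p/2>1$ enter) and the non-univalent change of variables give
\begin{equation*}
\int_\D\Bigl(\frac{N_{\vp,\om^\star}}{\om^\star}\Bigr)^{p/2}\frac{dA}{(1-|\cdot|)^2}
\lesssim\int_\D\Bigl(\frac{\om^\star(\z)}{\om^\star(\vp(\z))}\Bigr)^{p/2}\frac{|\vp'(\z)|^2}{(1-|\vp(\z)|)^2}\,dA(\z),
\end{equation*}
and the ratio of this integrand to that of \eqref{39intro} equals $\frac{|\vp'(\z)|^2(1-|\z|)^2}{(1-|\vp(\z)|)^2}\cdot\frac{\om^\star(\z)}{\om(\z)(1-|\z|)^2}\lesssim1$ by the Schwarz--Pick lemma and $\om^\star(\z)\asymp\om(\z)(1-|\z|)^2$ for $\om\in\R$; only a compact neighbourhood of the origin needs separate (and trivial) treatment, and no difficulty arises at the zeros of $\vp$ --- which may accumulate at $\T$ and would otherwise defeat the ``separate control'' you propose. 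Once repaired in this way, your route through the counting function is a legitimate and more elementary alternative, essentially Zhu's original argument from \cite{Zhu2001}, to the paper's proof, which instead identifies $C_\vp^\star C_\vp$ with the Toeplitz operator of the pull-back measure and verifies $\widetilde{\mathcal T}_\mu\in L^{p/2}_{\om/\om^\star}$ by a Littlewood--Paley identity and Schur's test with two measures.
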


Theorem~\ref{Thm:CompositionBoundedValence} is an extension of
\cite[Theorem~1.1]{Zhu2001} to the setting of regular weights. If
$\om(z)=(1-|z|^2)^\a$, then the statement in
Theorem~\ref{Thm:CompositionBoundedValence} is not valid for $p(\alpha+2)\le 2$ because in this case the condition \eqref{39intro} fails for all analytic self-maps $\vp$. More generally, by using \cite[p. 10 (ii)]{PelRat} one can show that if $\om\in\R$ and $p$ is small enough, then \eqref{39intro} fails for each $\vp$.
Moreover, \cite[Theorem~3]{XiaPams2003} shows that the
statement in Theorem~\ref{Thm:CompositionBoundedValence} does not remain valid for $\om\equiv1$ without the additional hypothesis regarding the valence of $\vp$.

It is easy to see that each regular weight $\om$ satisfies $\om(r)\asymp\om(t)$ whenever $1-r\asymp1-t$. This asymptotic relation shows that $\om\in\R$ must be essentially constant in each hyperbolically bounded region, and hence, in particular, $\om$ may not have zeros. This apparently severe requirement does not cause too much loss of generality in our study. This because in the next section we will show that if $\om\in\DD$ satisfies the reverse doubling property $\widehat{\om}(r)\ge C\widehat{\om}\left(1-\frac{1-r}{K}\right)$ for some $K>1$ and $C>1$, a condition that is satisfied for each $\om\in\R$, then there exists a differentiable strictly positive weight $W\in\R$ such that $\|\cdot\|_{A^p_\om}$ and $\|\cdot\|_{A^p_W}$ are comparable. In Section~\ref{sec2} we also discuss the kernel estimates and other auxiliary results. Section~\ref{sec3} is devoted to the study of bounded and compact Toeplitz operators. Schatten class Toeplitz and composition operators are discussed in Sections \ref{sec4} and \ref{sec:composition}, respectively.

\section{Pointwise and norm estimates of Bergman reproducing
kernels}\label{sec2}

We begin with considering the classes of weights appearing in this study and their basic properties. Then we will prove several pointwise and norm estimates for the reproducing kernels, and finally an auxiliary result on weak convergence of normalized kernels is established.

The first auxiliary lemma contains several characterizations of doubling weights and will be repeatedly used throughout the rest of the paper. For a proof, see \cite[Lemma~2.1]{PelSum14}. All along we will assume $\widehat{\om}(r)>0$ for all $0\le r<1$ without mentioning it, for otherwise $A^p_\om=\H(\D)$.

\begin{letterlemma}\label{Lemma:replacement-Lemmas-Memoirs}
Let $\om$ be a radial weight. Then the following conditions are
equivalent:
\begin{itemize}
\item[\rm(i)] $\om\in\DD$;
\item[\rm(ii)] There exist $C=C(\om)>0$ and $\b=\b(\om)>0$ such that
    \begin{equation*}
    \begin{split}
    \widehat{\om}(r)\le C\left(\frac{1-r}{1-t}\right)^{\b}\widehat{\om}(t),\quad 0\le r\le t<1;
    \end{split}
    \end{equation*}
\item[\rm(iii)] There exist $C=C(\om)>0$ and $\gamma=\gamma(\om)>0$ such that
    \begin{equation*}
    \begin{split}
    \int_0^t\left(\frac{1-t}{1-s}\right)^\g\om(s)\,ds
    \le C\widehat{\om}(t),\quad 0\le t<1;
    \end{split}
    \end{equation*}
\item[\rm(iv)] The asymptotic equality
    $$
    \int_0^1s^x\om(s)\,ds\asymp\widehat{\om}\left(1-\frac1x\right),\quad x\in[1,\infty),
    $$
is valid;
\item[\rm(v)] $\om^\star(z)\asymp\widehat{\om}(z)(1-|z|)$, $|z|\to1^-$;
\item[\rm(vi)] There exists $\lambda=\lambda(\om)\ge0$ such that
    $$
    \int_\D\frac{\om(z)}{|1-\overline{\z}z|^{\lambda+1}}dA(z)\asymp\frac{\widehat{\om}(\zeta)}{(1-|\z|)^\lambda},\quad \z\in\D;
    $$
\item[\rm(vii)] There exists $C=C(\om)>0$ such that the moments  $\om_n=\int_0^1 r^{n}\om(r)\,dr$ satisfy the condition
    $\om_{n}\le C\om_{2n}$.
\end{itemize}
\end{letterlemma}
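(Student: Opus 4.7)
The plan is to organize the seven equivalences around condition (ii), which is the sharpest and most flexible form. I would first establish the core chain (i) $\Leftrightarrow$ (ii) $\Leftrightarrow$ (iii), then link in (iv) and (vii), and finally deduce (v) and (vi) from (ii).

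For (i) $\Rightarrow$ (ii), iterate the doubling inequality to obtain $\widehat{\om}(r)\le C^{n}\widehat{\om}(1-(1-r)2^{-n})$; for $r\le t$, picking the smallest $n$ with $1-(1-r)2^{-n}\ge t$ (so $n\asymp\log_{2}\frac{1-r}{1-t}$) and using monotonicity of $\widehat{\om}$ gives (ii) with $\beta=\log_{2}C$, while the reverse direction is immediate by setting $t=(1+r)/2$. For (ii) $\Leftrightarrow$ (iii), write $\om(s)\,ds=-d\widehat{\om}(s)$ and apply integration by parts to the integral in (iii) with $\gamma>\beta$; the key step is to control $\int_{0}^{t}\widehat{\om}(s)(1-s)^{-\gamma-1}\,ds$ by $\widehat{\om}(t)(1-t)^{-\gamma}$ using (ii), after which the boundary terms are harmless. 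The converse extracts $\widehat{\om}(r)$ from the piece of the integral in (iii) on $[r,t]$.

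Next, (ii) $\Rightarrow$ (iv) proceeds by splitting $\int_{0}^{1}s^{x}\om(s)\,ds$ at $s=1-1/x$: on the upper piece, $s^{x}\asymp1$, so the integral is $\asymp\widehat{\om}(1-1/x)$; on the lower piece, $s^{x}\le e^{-x(1-s)}$ combined with (iii) forces the remainder into the same order. Then (iv) $\Rightarrow$ (vii) is automatic (take $x=n,2n$ and use doubling to compare $\widehat{\om}(1-1/n)\asymp\widehat{\om}(1-1/(2n))$), and (vii) $\Rightarrow$ (i) closes the loop by translating moment doubling back into doubling of $\widehat{\om}$ via (iv). For (v), write $\om^{\star}(z)=\int_{|z|}^{1}\om(s)\log(s/|z|)s\,ds$; the upper bound $\om^{\star}(z)\lesssim(1-|z|)\widehat{\om}(z)$ follows from $\log(s/|z|)\le(s-|z|)/|z|$, while the matching lower bound comes from restricting the integral to $[(1+|z|)/2,1]$, where $\log(s/|z|)\gtrsim1-|z|$, and invoking (i) to compare $\widehat{\om}((1+|z|)/2)$ with $\widehat{\om}(|z|)$.

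The main obstacle is (vi), the reproducing-kernel-type integral identity, which I would derive from (ii)/(iv). In polar coordinates and via the classical asymptotic
\begin{equation*}
\int_{0}^{2\pi}\frac{d\theta}{|1-\rho e^{i\theta}|^{\lambda+1}}\asymp\frac{1}{(1-\rho)^{\lambda}},\qquad\lambda>0,\ \rho\to 1^{-},
\end{equation*}
the claim reduces to proving $\int_{0}^{1}\om(r)(1-r|\z|)^{-\lambda}r\,dr\asymp\widehat{\om}(\z)(1-|\z|)^{-\lambda}$. I would split this integral at $r=|\z|$: the upper piece yields $\widehat{\om}(\z)(1-|\z|)^{-\lambda}$ directly, since $1-r|\z|\asymp 1-|\z|$ there, while the lower piece is dominated by (iii) with $\gamma=\lambda$. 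The reverse implication (vi) $\Rightarrow$ (iv) is obtained by testing (vi) at $\z\in(0,1)$ and using the power-series expansion of $(1-r\z)^{-\lambda-1}$ together with (vii)-type comparisons to read off the moment asymptotics in (iv).
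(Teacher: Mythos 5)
The paper does not prove this lemma at all: it is quoted verbatim from \cite[Lemma~2.1]{PelSum14}, so there is no internal proof to compare against. Judged on its own terms, your outline gets the core block right --- the iteration argument for (i)$\Leftrightarrow$(ii), the integration by parts with $\gamma>\beta$ for (ii)$\Rightarrow$(iii) and the extraction of $\widehat{\om}(r)-\widehat{\om}(t)$ for the converse, the split of $\int_0^1 s^x\om(s)\,ds$ at $s=1-1/x$ using $e^{-u}\lesssim u^{-\gamma}$, and the reduction of (vi) to $\int_0^1\om(r)(1-r|\z|)^{-\lambda}r\,dr$ via the classical angular estimate are all sound. But the way you close the loop through (iv) and (vii) is circular. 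In the chain (ii)$\Rightarrow$(iv)$\Rightarrow$(vii)$\Rightarrow$(i), you prove (iv)$\Rightarrow$(vii) by ``using doubling'', i.e.\ by invoking (i), and you prove (vii)$\Rightarrow$(i) ``via (iv)''; so neither (iv) nor (vii) is ever shown to imply the rest from itself alone, and the equivalence of these two conditions with the others is not established. Both steps are genuinely nontrivial. For (iv)$\Rightarrow$(vii) one can argue directly: $\widehat{\om}(1-\tfrac1x)-\widehat{\om}(1-\tfrac1{2x})=\int_{1-1/x}^{1-1/(2x)}\om(s)\,ds\le (1-\tfrac1x)^{-2x}\int_0^1 s^{2x}\om(s)\,ds\lesssim\widehat{\om}(1-\tfrac1{2x})$ by (iv) itself. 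For (vii)$\Rightarrow$(i) an iteration is needed: from $s^{2^kx}\le e^{-(2^k-1)/2}s^{x}$ on $[0,1-\tfrac1{2x}]$ one gets $\om_x\le C^k\om_{2^kx}\le C^ke^{-(2^k-1)/2}\om_x+C^k\widehat{\om}(1-\tfrac1{2x})$, and choosing $k=k(C)$ with $C^ke^{-(2^k-1)/2}\le\tfrac12$ yields $\om_x\lesssim\widehat{\om}(1-\tfrac1{2x})$, which together with the trivial bound $\widehat{\om}(1-\tfrac1x)\le 4\om_x$ gives doubling. Nothing of this sort appears in your sketch.

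The second gap is that condition (v) is only treated in one direction: you deduce $\om^\star(z)\asymp\widehat{\om}(z)(1-|z|)$ from (i), but the lemma asserts equivalence, so (v)$\Rightarrow$(i) must also be proved, and it is not a triviality. Writing $\om^\star(z)\asymp\int_{|z|}^1\widehat{\om}(t)\,dt=:h(|z|)$ for $|z|\ge\tfrac12$, condition (v) reads $-h'(r)(1-r)\le Ch(r)$; integrating this differential inequality gives $h(r)\le\left(\frac{1-r}{1-t}\right)^{C}h(t)$ for $r\le t$, and feeding this back through (v) produces condition (ii). Some such Gronwall-type step is required and is missing. (A smaller remark: your (vi)$\Rightarrow$(iv) sketch again leans on ``(vii)-type comparisons''; it is cleaner to test (vi) at $\z=\rho\in(0,1)$, restrict the radial integral to $[2\rho-1,\rho]$ where $1-r\rho\asymp1-\rho$, and read off $\widehat{\om}(2\rho-1)\lesssim\widehat{\om}(\rho)$, which is (i) directly.)
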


We next briefly discuss radial weights having a kind of reversed doubling property, and then show how this is related to the pointwise condition that defines the class $\R$ of regular weights. More precisely, we show that if $\om\in\DD$ satisfies the reverse doubling condition appearing in part (i) of Lemma~\ref{Lemma:weights-in-R} below, then one can find a strictly positive $n$ times differentiable weight which belongs to $\R$ and induces the same Bergman space as $\om$. The next lemma can be find in \cite{TwoweightII}.

\begin{letterlemma}\label{Lemma:weights-in-R}
Let $\om$ be a radial weight. For each $K>1$, let $\r_n=\r_n(\om,K)$ be the sequence defined by $\widehat{\om}(\r_n)=\widehat{\om}(0)K^{-n}$, and for each $\b\in\RR$, write $\om_{[\b]}(z)=\om(z)(1-|z|)^\b$. Then the following statements are equivalent:
\begin{itemize}
\item[\rm(i)] There exist $K=K(\om)>1$ and $C=C(\om)>1$ such that $\widehat{\om}(r)\ge C\widehat{\om}\left(1-\frac{1-r}{K}\right)$ for all $0\le r<1$;
\item[\rm(ii)] There exist $C=C(\om)>0$ and $\b=\b(\om)>0$ such that
    \begin{equation*}
    \begin{split}
    \widehat{\om}(t)\le C\left(\frac{1-t}{1-r}\right)^{\b}\widehat{\om}(r),\quad 0\le r\le t<1;
    \end{split}
    \end{equation*}
\item[\rm(iii)] For some (equivalently for each) $\b\in(0,\infty)$, there exists $C=C(\b,\om)\in(0,1)$ such that
    $$
    \frac{\int_r^1\widehat{\om}(t)\b(1-t)^{\b-1}\,dt}{(1-r)^{\b}}\le C\widehat{\om}(r),\quad 0<r<1.
    $$
\end{itemize}
\end{letterlemma}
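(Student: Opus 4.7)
The plan is to close the cycle (i)$\Leftrightarrow$(ii)$\Leftrightarrow$(iii) using two distinct mechanisms: iteration of the reverse doubling inequality in (i) for the pair (i)$\Leftrightarrow$(ii), and a monotonicity-based splitting of the integral in (iii) for (i)$\Leftrightarrow$(iii). Throughout, I will exploit that $\widehat{\om}$ is nonincreasing and that the reverse doubling in (i) is stable under enlarging $K$, which provides the flexibility needed to upgrade bounds to strict inequalities.

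For (i)$\Rightarrow$(ii), given $0\le r\le t<1$, I would select the unique integer $n\ge 0$ with $1-(1-r)/K^n\le t<1-(1-r)/K^{n+1}$. Iterating (i) $n$ times yields $\widehat{\om}(r)\ge C^n\widehat{\om}(1-(1-r)/K^n)\ge C^n\widehat{\om}(t)$; since $K^n\asymp(1-r)/(1-t)$, rearranging gives (ii) with exponent $\b=\log C/\log K$. The converse (ii)$\Rightarrow$(i) is immediate: substituting $t=1-(1-r)/K'$ into (ii) yields $\widehat{\om}(1-(1-r)/K')\le C(K')^{-\b}\widehat{\om}(r)$, and choosing $K'$ large enough that $C(K')^{-\b}<1$ produces the reverse doubling inequality.

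For (i)$\Rightarrow$(iii) with any $\b>0$, I would split the integral at $t_0=1-(1-r)/K$ and apply monotonicity on each piece, obtaining
\begin{equation*}
\int_r^1\widehat{\om}(t)\b(1-t)^{\b-1}\,dt\le\widehat{\om}(r)(1-r)^\b(1-K^{-\b})+\widehat{\om}(t_0)(1-r)^\b K^{-\b}.
\end{equation*}
Substituting $\widehat{\om}(t_0)\le\widehat{\om}(r)/C$ from (i) then bounds the right-hand side by $\widehat{\om}(r)(1-r)^\b[1-K^{-\b}(1-1/C)]$, and the bracketed expression lies in $(0,1)$ since $C,K>1$. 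Conversely, for (iii)$\Rightarrow$(i), restricting the same integral to $[r,t_0]$ and applying monotonicity from below gives $\widehat{\om}(t_0)(1-r)^\b(1-K^{-\b})\le C(1-r)^\b\widehat{\om}(r)$, so that $\widehat{\om}(t_0)\le C(1-K^{-\b})^{-1}\widehat{\om}(r)$; picking $K$ large so that $K^\b>1/(1-C)$ gives reverse doubling.

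The main technical delicacy is maintaining the strict inequalities $C>1$ in (i) and $C<1$ in (iii). In both converse directions the remedy is identical, namely enlarging $K$ until the relevant factor crosses the threshold. Once the cycle is closed, the equivalence of \emph{some} $\b\in(0,\infty)$ and \emph{each} $\b\in(0,\infty)$ in (iii) follows automatically, since both characterisations are equivalent to (i), which does not reference $\b$.
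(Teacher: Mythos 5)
Your proof is correct: the iteration of the reverse doubling inequality gives (i)$\Leftrightarrow$(ii) with $\b=\log C/\log K$, the splitting of $\int_r^1\widehat{\om}(t)\b(1-t)^{\b-1}\,dt$ at $t_0=1-(1-r)/K$ handles (i)$\Leftrightarrow$(iii), and closing the cycle through (i) legitimately yields the ``some/each'' claim in (iii). The paper itself offers no proof of this lemma (it defers to the preprint \cite{TwoweightII}), but your argument is exactly the standard telescoping-plus-monotonicity scheme used for such doubling equivalences (compare the proof of Lemma~A in \cite{PelSum14}), so there is nothing to add.
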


By Lemma~\ref{Lemma:weights-in-R} and \cite[Lemma~1.1]{PelRat} each $\om\in\R$ satisfies the reverse doubling condition.The next result shows that if $\om\in\DD$ satisfies
the reverse doubling condition, then there exists a continuous and locally smooth weight~$W$ that induces the same Bergman space as $\om$.

\begin{proposition}\label{Lemma:w-continuous}
Let $0<p<\infty$ and $\om\in\DD$, and write
$W(r)=W_\om(r)=\widehat{\om}(r)/(1-r)$ for all $0\le r<1$. Then
$\|f\|_{A^p_W}\asymp\|f\|_{A^p_\om}$ for all $f\in\H(\D)$ if and
only if $\om$ satisfies the reverse doubling condition appearing in part (i) of Lemma~\ref{Lemma:weights-in-R}.
\end{proposition}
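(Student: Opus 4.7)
The plan is to reduce the norm comparison to a pointwise comparison of the tail integrals $\widehat{\om}$ and $\widehat{W}$. Passing to polar coordinates, writing $M_p^p(r,f)=|f(0)|^p+\int_0^r dM_p^p(s,f)$, and applying Fubini, one obtains for any radial weight $\nu$ and $f\in\H(\D)$ the identity
\[
\|f\|_{A^p_\nu}^p\asymp|f(0)|^p\widehat{\nu}(0)+\int_0^1\widehat{\nu}(s)\,dM_p^p(s,f).
\]
Hence $\widehat{\om}\asymp\widehat{W}$ on $[0,1)$ immediately forces $\|f\|_{A^p_\om}\asymp\|f\|_{A^p_W}$ for every $f\in\H(\D)$. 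For the converse, I would test the norm equivalence on the monomials $f(z)=z^n$ and apply the moment asymptotic of Lemma~\ref{Lemma:replacement-Lemmas-Memoirs}(iv) to both $\om$ and $W$; a short splitting of $\widehat{W}(r)$ at $(1+r)/2$ shows $W\in\DD$ so that the asymptotic is available for $W$ as well, and $\widehat{\om}(1-1/n)\asymp\widehat{W}(1-1/n)$ extends via doubling to $\widehat{\om}\asymp\widehat{W}$ on $[0,1)$.

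The proposition therefore reduces to showing $\widehat{\om}\asymp\widehat{W}$ on $[0,1)$ if and only if $\om$ satisfies the reverse doubling condition. The lower bound $\widehat{W}\gtrsim\widehat{\om}$ is automatic under $\om\in\DD$, since
\[
\widehat{W}(r)\ge\int_r^{(1+r)/2}\frac{\widehat{\om}(s)}{1-s}\,ds\ge\widehat{\om}\!\left(\tfrac{1+r}{2}\right)\log 2\gtrsim\widehat{\om}(r),
\]
where the last step invokes the defining inequality of $\DD$. For the upper bound $\widehat{W}\lesssim\widehat{\om}$, I would plug the polynomial decay estimate of Lemma~\ref{Lemma:weights-in-R}(ii) guaranteed by reverse doubling, $\widehat{\om}(s)\le C((1-s)/(1-r))^\b\widehat{\om}(r)$ with some $\b>0$, directly into the definition of $\widehat{W}$ and integrate, obtaining $\widehat{W}(r)\le(C/\b)\widehat{\om}(r)$.

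Conversely, assuming $\widehat{W}\lesssim\widehat{\om}$, the Fubini identity $\widehat{W}(r)=\int_r^1\om(s)\log\frac{1-r}{1-s}\,ds$ together with the trivial lower bound obtained by restricting the integration to $s\ge 1-(1-r)/K$ produces
\[
\log K\cdot\widehat{\om}\!\left(1-\tfrac{1-r}{K}\right)\le\widehat{W}(r)\lesssim\widehat{\om}(r),
\]
and choosing $K$ large enough that $\log K$ beats the implicit constant yields the reverse doubling condition of Lemma~\ref{Lemma:weights-in-R}(i). The main technical point to notice is the asymmetry between the two bounds on $\widehat{W}$: the lower one comes for free from $\om\in\DD$, whereas the upper one is precisely the content of reverse doubling, and this is what makes the equivalence in the proposition sharp.
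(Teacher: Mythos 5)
Your argument is correct, and it takes a genuinely different route from the paper's. For sufficiency, the paper invokes the embedding theorem \cite[Theorem~1]{PelRatEmb} to reduce the norm equivalence to $\om(S(a))\asymp W(S(a))$, i.e.\ to $\widehat{\om}\asymp\widehat{W}$, whereas you get the same reduction from the elementary distribution-function identity $\|f\|_{A^p_\nu}^p\asymp|f(0)|^p\widehat{\nu}(0)+\int_0^1\widehat{\nu}(s)\,dM_p^p(s,f)$; both proofs then deduce $\widehat{W}\asymp\widehat{\om}$ from Lemma~\ref{Lemma:replacement-Lemmas-Memoirs}(ii) and Lemma~\ref{Lemma:weights-in-R}(ii) in essentially the same way. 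The necessity directions diverge more: the paper tests the norm equivalence on the kernel-type functions $f_a(z)=(1-\overline{a}z)^{-(\lambda+1)/p}$ via Lemma~\ref{Lemma:replacement-Lemmas-Memoirs}(vi) and lands on condition (iii) of Lemma~\ref{Lemma:weights-in-R} with $\b=1$, while you test on monomials (using Lemma~\ref{Lemma:replacement-Lemmas-Memoirs}(iv), after checking $W\in\DD$) to upgrade the norm equivalence to the pointwise comparison $\widehat{W}\lesssim\widehat{\om}$, and then extract condition (i) of Lemma~\ref{Lemma:weights-in-R} directly from the Fubini identity $\widehat{W}(r)=\int_r^1\om(s)\log\frac{1-r}{1-s}\,ds$ by restricting the integration to $s\ge1-\frac{1-r}{K}$ and taking $K$ large. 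Your version is more self-contained --- it avoids both the embedding theorem and the equivalences within Lemma~\ref{Lemma:weights-in-R}, producing the reverse doubling inequality in the exact form stated in part (i) --- and it isolates cleanly the fact that the norm equivalence is itself equivalent to $\widehat{\om}\asymp\widehat{W}$; the paper's proof is shorter given the cited machinery. One minor point to spell out when writing this up: the $\asymp$ in your first identity absorbs the Jacobian factor $s$ from polar coordinates, which uses $\widehat{\nu}(0)\lesssim\widehat{\nu}(1/2)$ and hence the doubling hypothesis on $\nu$; since both $\om$ and $W$ lie in $\DD$ here, this is harmless.
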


\begin{proof}
Since $\om$ belongs to $\DD$ by the hypothesis, so does $W$. Therefore $\|f\|_{A^p_W}\asymp\|f\|_{A^p_\om}$ for all $f\in\H(\D)$ by \cite[Theorem~1]{PelRatEmb} if $W(S(a))\asymp\om(S(a))$ for all $a\in\D\setminus\{0\}$. Since $\om$ and $W$ are radial, this is the case if
    $$
    \widehat{W}(r)%=\int_r^1\frac{\widehat{\om}(t)}{1-t}\,dt
    =\widehat{\om}(r)\int_r^1\frac{\widehat{\om}(t)}{\widehat{\om}(r)}\frac{1}{1-t}\,dt\asymp\widehat{\om}(r),\quad 0\le r<1.
    $$
If now $\om\in\DD$ satisfies the reverse doubling condition, then Lemma~\ref{Lemma:replacement-Lemmas-Memoirs}(ii) and Lemma~\ref{Lemma:weights-in-R}(ii) applied to the middle term above imply the asymptotic equality we are after.

Conversely, assume that $\om\in\DD$ and $\|f\|_{A^p_W}\asymp\|f\|_{A^p_\om}$ for all $f\in\H(\D)$. Write $f_a(z)=(1-\overline{a}z)^{-\frac{\lambda+1}{p}}$ for all $a\in\D$. By Lemma~\ref{Lemma:replacement-Lemmas-Memoirs}(vi) there exists $\lambda=\lambda(\om)\ge0$ such that
    \begin{equation*}
    \begin{split}
    \frac{\widehat{\om}(a)}{(1-|a|)^\lambda}
    &\asymp\int_\D\frac{\om(z)}{|1-\overline{a}z|^{\lambda+1}}\,dA(z)=\|f_a\|_{A^p_\om}^p\asymp\|f_a\|_{A^p_W}^p\asymp
    \int_0^1\frac{\widehat{\om}(r)}{(1-|a|r)^{\lambda}(1-r)}\,dr\\
    &\ge\int_{|a|}^1\frac{\widehat{\om}(r)}{(1-|a|r)^{\lambda}(1-r)}\,dr
    \gtrsim\frac{\widehat{\widehat{\om}}(r)}{(1-|a|)^{\lambda+1}},
    \end{split}
    \end{equation*}
and thus $\om$ satisfies the Lemma~\ref{Lemma:weights-in-R}(iii) with $\b=1$.
\end{proof}

Consider now $\om\in\DD$ satisfying the reverse doubling condition. Then $A^p_\om=A^p_{W_\om}$ and $W_\om\in\R$ by the first part of the proof of Proposition~\ref{Lemma:w-continuous}. The weight $W_\om$ is continuous and strictly positive. Further, the differentiable weight $\widehat{W_\om}(r)/(1-r)$ belongs to $\R$ and induces the same Bergman space as $\om$. Therefore, by repeating the process, for a given $\om\in\DD$ satisfying the reverse doubling condition, we can always find a strictly positive $n$ times differentiable weight that induces the same Bergman space as the original weight~$\om$. Therefore assuming $\om\in\R$ instead of the two doubling conditions is not a severe restriction in our study.

The true advantage of the class $\R$ is the local smoothness of its weights. It is clear that if $\om\in\R$, then for each $s\in[0,1)$
there exists a constant $C=C(s,\omega)>1$ such that
    \begin{equation}\label{eq:r2}
    C^{-1}\om(t)\le \om(r)\le C\om(t),\quad 0\le r\le t\le
    r+s(1-r)<1.
    \end{equation}
Therefore, for $\om\in\R$ and $r\in(0,1)$,
    \begin{equation}\label{eq:discocaja}
    \om\left(S(z)\right)\asymp\widehat{\om}(z)(1-|z|)\asymp\om(z)(1-|z|)^2
    \asymp\om\left(\Delta(z,r)\right),\quad z\in\D,
    \end{equation}
where the constants of comparison depend on $\om$ and also on $r$ in the last case. This observation finishes our discussion on basic properties of different classes of weights.

We next turn to kernel estimates. In order to prove our main results, and in particular to deal with the Berezin transform of a Toeplitz operator,
we will need asymptotic estimates for the norm of the Bergman reproducing kernel
in several spaces of analytic functions in $\D$. The next
result follows by \cite[Theorem~1]{Twoweight} (see
also \cite[Lemma~6.2]{PelRat}), Lemma~\ref{Lemma:replacement-Lemmas-Memoirs} and \eqref{eq:discocaja}.

\begin{lettertheorem}\label{Lemma:kernels}
Let $\om,\nu\in\DD$, $0<p<\infty$ and $n\in\N\cup\{0\}$. Then
    \begin{equation}\label{kernel-estimate}
    \|(B^\om_z)^{(n)}\|_{A^p_\nu}^p\asymp\int_0^{|z|}\frac{\widehat{\nu}(t)}{\widehat{\om}(t)^p(1-t)^{p(n+1)}}\,dt,\quad|z|\to1^-.
    \end{equation}
In particular, if $1<p<\infty$, $\om\in\R$ and $r\in(0,1)$, then
    \begin{equation}\label{kernel-estimate-R}
    \|B^\om_z\|_{A^p_\om}^p\asymp\frac{1}{\om(S(z))^{p-1}}\asymp\frac{1}{\om(\Delta(z,r))^{p-1}},\quad
    z\in\D.
    \end{equation}
\end{lettertheorem}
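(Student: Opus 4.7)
The plan is to derive \eqref{kernel-estimate} directly by invoking \cite[Theorem~1]{Twoweight} (or equivalently \cite[Lemma~6.2]{PelRat}); that reference supplies precisely the asserted asymptotic for $\|(B^\om_z)^{(n)}\|_{A^p_\nu}^p$ in terms of an integral of $\widehat{\nu}/\widehat{\om}^p$ weighted by $(1-t)^{-p(n+1)}$, so nothing further is required there. The substance then lies in \eqref{kernel-estimate-R}, which I would obtain by specializing the general formula to $\nu=\om$ and $n=0$ and then evaluating the integral
$$
I(z):=\int_0^{|z|}\frac{dt}{\widehat{\om}(t)^{p-1}(1-t)^p}
$$
up to a multiplicative constant. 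Once $I(z)\asymp(\widehat{\om}(z)(1-|z|))^{-(p-1)}$ is established, \eqref{eq:discocaja} converts this into $\om(S(z))^{-(p-1)}$ and $\om(\Delta(z,r))^{-(p-1)}$, yielding \eqref{kernel-estimate-R}.

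For the lower bound of $I(z)$, I would restrict the integration to $J_z=[|z|-\tfrac{1-|z|}{2},\,|z|]$. Since $\om\in\R$ enjoys both Lemma~\ref{Lemma:replacement-Lemmas-Memoirs}(ii) and Lemma~\ref{Lemma:weights-in-R}(ii) (the latter because each regular weight obeys the reverse doubling condition), $\widehat{\om}(t)\asymp\widehat{\om}(z)$ uniformly on $J_z$. The remaining integral $\int_{J_z}(1-t)^{-p}\,dt\asymp(1-|z|)^{1-p}$ because $p>1$, and this gives $I(z)\gtrsim\widehat{\om}(z)^{-(p-1)}(1-|z|)^{-(p-1)}$.

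For the upper bound, I would perform a dyadic decomposition of $[0,|z|]$ via points $t_n$ with $\widehat{\om}(t_n)=M^n\widehat{\om}(|z|)$, choosing $M>1$ sufficiently large. Combining Lemma~\ref{Lemma:replacement-Lemmas-Memoirs}(ii) (which furnishes $1-t_n\ge\lambda^n(1-|z|)$ for some $\lambda>1$ once $M$ is fixed large enough) with the monotonicity bound $\widehat{\om}(t)\ge M^{n-1}\widehat{\om}(|z|)$ on $[t_n,t_{n-1}]$ gives a contribution $\lesssim(M\lambda)^{-(n-1)(p-1)}(\widehat{\om}(z)(1-|z|))^{-(p-1)}$ from each dyadic piece. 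Summing the resulting convergent geometric series (which converges because $p>1$) yields the required upper bound $I(z)\lesssim(\widehat{\om}(z)(1-|z|))^{-(p-1)}$.

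The main technical point I expect is the careful coordination of the scaling parameter $M$ with the geometric growth rate $\lambda$ in the dyadic upper bound, but this is standard once the two doubling lemmas are in hand. Everything else, including the passage from $\widehat{\om}(z)(1-|z|)$ to $\om(S(z))$ or $\om(\Delta(z,r))$, is a direct application of \eqref{eq:discocaja}.
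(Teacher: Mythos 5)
Your proposal is correct and matches the paper's own treatment: the paper likewise obtains \eqref{kernel-estimate} by direct citation of \cite[Theorem~1]{Twoweight} and then deduces \eqref{kernel-estimate-R} from Lemma~\ref{Lemma:replacement-Lemmas-Memoirs} and \eqref{eq:discocaja}, which is exactly the integral evaluation you carry out (your dyadic decomposition, with $M$ chosen large enough relative to the doubling constant so that $\lambda>1$, is a standard and valid way to do it). The only cosmetic remark is that for $\om\in\R$ one can shortcut the upper bound by noting the integrand is comparable to the derivative of $t\mapsto\bigl(\widehat{\om}(t)(1-t)\bigr)^{-(p-1)}$, but your argument has the advantage of using only the two doubling conditions.
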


As usual, we write $H^\infty$ for the space of bounded analytic functions in $\D$, and
$\mathcal{B}$ stands for the Bloch functions, that is, the space of $f\in\H(\D)$ such that
$\|f\|_{\mathcal{B}}=\sup_{z\in\D}|f'(z)|(1-|z|)+|f(0)|<\infty$.

\begin{lemma}\label{Blochnorm}
Let $\om\in\DD$. Then
    $$
    \| B^\om_z\|_{\mathcal{B}}\asymp \frac{1}{\omega(S(z))} \asymp \| B^\om_z\|_{H^\infty}, \quad z\in\D.
    $$
\end{lemma}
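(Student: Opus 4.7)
The plan is to read off both norms from the explicit Taylor series $B^\om_z(w)=\sum_{n\ge0}(\overline z w)^n/(2\om_{2n+1})$, and then invoke the moment characterization in Lemma~\ref{Lemma:replacement-Lemmas-Memoirs}. Since every Taylor coefficient $c_n:=1/(2\om_{2n+1})$ is positive, the choice $w=(z/|z|)\rho$ aligns all summands in phase; the resulting series has radius of convergence $1/|z|>1$, so taking $\rho\to 1^-$ gives the exact identity
$$
\|B^\om_z\|_{H^\infty}=\sum_{n=0}^\infty\frac{|z|^n}{2\om_{2n+1}}.
$$
By Lemma~\ref{Lemma:replacement-Lemmas-Memoirs}(iv) and~(vii) the moments satisfy $\om_{2n+1}\asymp\widehat\om(1-1/n)$ for $n\ge 1$, reducing the task to estimating $\sum_n|z|^n/\widehat\om(1-1/n)$.

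Setting $N:=\lfloor 1/(1-|z|)\rfloor$ and splitting the sum there, I would control the head $n\le N$ by monotonicity of $\widehat\om$ (giving $\widehat\om(1-1/n)\ge\widehat\om(|z|)$), and the tail $n>N$ by the polynomial lower bound $\widehat\om(1-1/n)\gtrsim(N/n)^\b\widehat\om(|z|)$ supplied by Lemma~\ref{Lemma:replacement-Lemmas-Memoirs}(ii); both pieces are $\asymp N/\widehat\om(|z|)$, yielding $\|B^\om_z\|_{H^\infty}\asymp 1/((1-|z|)\widehat\om(|z|))\asymp 1/\om(S(z))$. The Bloch upper bound $\|B^\om_z\|_{\mathcal B}\lesssim 1/\om(S(z))$ is then immediate from the standard embedding $\|f\|_{\mathcal B}\lesssim\|f\|_{H^\infty}$, obtained via Cauchy's formula on a circle of radius $(1-|w|)/2$ around $w$.

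For the matching Bloch lower bound I would test the defining supremum at the explicit point $w=z$. Writing $B^\om_z(w)=F(\overline z w)$ with $F(x)=\sum_n c_n x^n$ gives $(B^\om_z)'(z)=\overline z F'(|z|^2)$, hence
$$
\|B^\om_z\|_{\mathcal B}\ge(1-|z|)|(B^\om_z)'(z)|=(1-|z|)|z|\,F'(|z|^2).
$$
The same series-to-integral asymptotic, now applied to $F'(s)=\sum_{n\ge 1}nc_n s^{n-1}$, gives $F'(s)\asymp 1/((1-s)^2\widehat\om(s))$; combined with the doubling identity $\widehat\om(|z|^2)\asymp\widehat\om(|z|)$ (valid since $|z|^2$ and $|z|$ lie at bounded hyperbolic distance for $|z|\ge1/2$), this yields $(1-|z|)|(B^\om_z)'(z)|\gtrsim 1/\om(S(z))$, while the regime $|z|<1/2$ is trivial.

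The main technical obstacle throughout is precisely the series-to-integral asymptotic, where the doubling condition $\om\in\DD$ must be exploited on both sides of the threshold $N\asymp 1/(1-|z|)$: the upper bound forces the use of Lemma~\ref{Lemma:replacement-Lemmas-Memoirs}(ii) in the tail, whereas the lower bound only needs to isolate the block $n\asymp N$. As an independent sanity check, the $H^\infty$ lower bound can be obtained more cheaply from $\|B^\om_z\|_{H^\infty}\ge B^\om_z(z)=\|B^\om_z\|^2_{A^2_\om}$ together with Theorem~\ref{Lemma:kernels} at $p=2$, $n=0$, $\nu=\om$, and the asymptotic $\int_0^{|z|}dt/(\widehat\om(t)(1-t)^2)\asymp 1/((1-|z|)\widehat\om(|z|))$, which follows from the same doubling arguments.
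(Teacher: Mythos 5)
Your proof is correct and follows essentially the same route as the paper: the Bloch lower bound comes from evaluating $(B^\om_z)'$ at $z$ and converting the coefficient sum $\sum n|z|^{2n}/\om_{2n+1}$ into $1/(\om(S(z))(1-|z|))$ via the doubling conditions of Lemma~\ref{Lemma:replacement-Lemmas-Memoirs}, the upper bound passes through $\|\cdot\|_{\mathcal B}\lesssim\|\cdot\|_{H^\infty}$, and the $H^\infty$ estimate is read off from the positivity of the Taylor coefficients. The only cosmetic differences are that you re-derive the series-to-integral asymptotics by splitting at $N\asymp1/(1-|z|)$ where the paper cites the ready-made moment estimate from \cite{Twoweight}, and that you record the exact identity $\|B^\om_z\|_{H^\infty}=\sum_n|z|^n/(2\om_{2n+1})$, which the paper does not need since it closes the chain through the Bloch lower bound.
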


\begin{proof}
Since
    $$
    B^\om_z(\z)=\sum^\infty_{n=0}\frac{(\z \overline{z})^n}{2 \om_n},
    \quad(B^\om_z)'(\z)=\sum^\infty_{n=1}\frac{n\z^{n-1}
    \overline{z}^n}{2 \om_n},\quad z,\z\in\D,
    $$
the estimate \cite[(20)]{Twoweight}, with $p=1$, $N=2$ and $r=|z|^2$, together with Lemma~\ref{Lemma:replacement-Lemmas-Memoirs} yields
    \begin{equation}\label{kusiputki}
    \begin{split}
    &\left|(B^\om_z)'(z)\right|
    \asymp\sum^\infty_{n=1}\frac{n|z|^{2(n-1)}}{\om_n}
    \asymp \int_0^{|z|^2}\frac{1}{\widehat{\om}(t)(1-t)^3}\,dt\\
    &\asymp\frac{1}{\widehat{\om}(z^2)(1-|z|^2)^2}
    \asymp \frac{1}{\om(S(z))(1-|z|)},\quad |z|\to 1^-,
    \end{split}
    \end{equation}
and hence
    $$
    \frac{1}{\omega(S(z))}\lesssim\|B^\om_z\|_{\mathcal{B}}, \quad |z|\to 1^-.
    $$
Since $\|B^\om_z\|_{\mathcal{B}}\le2\left\|B^\omega_z\right\|_{H^\infty}$, it remains to establish the desired upper estimate for the $H^\infty$-norm.  To see this, observe first that
    \begin{equation*}
    \begin{split}
    \left|B^\om_z(\z) \right|\le\sum^\infty_{n=0}\frac{|z|^n}{2\om_n},\quad z,\z\in\D.
    \end{split}
    \end{equation*}
Then, by using again the estimate \cite[(20)]{Twoweight}, but now with $p=1$,
$N=1$ and $r=|z|$, it follows that
    $$
    \left\| B^\omega_z \right\|_{H^\infty}\le \sum^\infty_{n=0}\frac{ |z|^n}{2 \om_n}
    \asymp \int_0^{|z|}\frac{dt}{\widehat{\omega}(t)(1-t)^2} \asymp \frac{1}{\omega(S(z))},\quad |z|\to 1^-.
    $$
This finishes the proof.
\end{proof}

We next establish two local pointwise estimates for the Bergman
reproducing kernels. To do this, for each
$\delta\in(0,1]$ and $a\in\D\setminus\{0\}$, write
$a_\delta=(1-\delta(1-|a|))e^{i\arg a}$. Then $a_1=a$, $|a_\d|>|a|$ for all $\d\in(0,1)$, and $\lim_{\d\to0^+}a_\d=a/|a|$.

\begin{lemma}\label{le:kersquare} Let $\om\in\DD$. Then there exist constants $c=c(\om)>0$ and $\delta=\delta(\om)\in(0,1]$
such that
    \begin{equation}\label{ke0}
    |B^\om_a(z)|\ge\frac{c}{\om(S(a))},\quad z\in S(a_\delta),\quad a\in\D\setminus\{0\}.
    \end{equation}
\end{lemma}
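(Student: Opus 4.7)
My plan is to exploit the fact that, by radiality of $\om$, the reproducing kernel depends only on $\overline{a}z$. Writing
$$
B^\om_a(z)=\sum_{n=0}^\infty\frac{(\overline{a}z)^n}{2\om_n}=K(\overline{a}z),\qquad K(\xi)=\sum_{n=0}^\infty\frac{\xi^n}{2\om_n},
$$
the problem reduces to bounding $|K(\xi)|$ from below when $\xi=\overline{a}z$ and $z\in S(a_\delta)$. In this case $|\xi|=|a||z|\in[|a||a_\delta|,|a|)\subset[|a|^2,|a|)$ and $|\arg\xi|=|\arg z-\arg a|\le|I_{a_\delta}|/2=\delta(1-|a|)/2$.

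First I would establish the pointwise estimate $K(r)\asymp 1/\om(S(a))$ uniformly for $r\in[|a|^2,|a|]$. Repeating the computation in the proof of Lemma~\ref{Blochnorm} (via the power series estimate \cite[(20)]{Twoweight} applied with $p=1,N=1$) combined with Lemma~\ref{Lemma:replacement-Lemmas-Memoirs} gives
$$
K(r)\asymp\int_0^r\frac{dt}{\widehat{\om}(t)(1-t)^2}\asymp\frac{1}{\widehat{\om}(r)(1-r)}\asymp\frac{1}{\om(S(r))},\quad r\to1^-.
$$
The doubling property yields $\widehat{\om}(|a|^2)\asymp\widehat{\om}(|a|)$ and $1-|a|^2\asymp1-|a|$, which upgrades the pointwise estimate to the desired uniform comparison on $[|a|^2,|a|]$. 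Note that $K(r)>0$ there, since every Taylor coefficient of $K$ is positive.

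Next I would control the angular variation of $K$. Using $|e^{in\theta}-1|\le n|\theta|$ together with the analogous series estimate (applied now with $N=2$) gives
$$
|K(\xi)-K(|\xi|)|\le|\arg\xi|\sum_{n=1}^\infty\frac{n|\xi|^n}{2\om_n}\lesssim\frac{|\arg\xi|}{\widehat{\om}(|\xi|)(1-|\xi|)^2}\asymp\frac{|\arg\xi|}{\om(S(a))(1-|a|)},
$$
where in the last step I use $|\xi|\in[|a|^2,|a|]$ and the doubling property. Plugging in $|\arg\xi|\le\delta(1-|a|)/2$ I get $|K(\xi)-K(|\xi|)|\le C\delta/\om(S(a))$. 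Since $K(|\xi|)$ is a positive real number bounded below by $c_1/\om(S(a))$,
$$
|B^\om_a(z)|=|K(\xi)|\ge K(|\xi|)-|K(\xi)-K(|\xi|)|\ge\frac{c_1-C\delta}{\om(S(a))},
$$
and the lemma follows by choosing $\delta=\min\{1,c_1/(2C)\}$, with $c=c_1/2$. The case of $|a|$ bounded away from $1$ is handled separately by a continuity argument, since $B^\om_a$ depends continuously on $a$ and $B^\om_0$ is the positive constant $1/(2\om_0)$.

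The main obstacle is that a direct approach based on the Bloch-type derivative bound $|(B^\om_a)'(w)|(1-|w|)\lesssim 1/\om(S(a))$ from Lemma~\ref{Blochnorm} is insufficient: since $|z-a|\asymp 1-|a|$ for $z\in S(a_\delta)$ regardless of the choice of $\delta\in(0,1]$, integrating the derivative along any reasonable path from $a$ to $z$ produces only $|B^\om_a(z)-B^\om_a(a)|\le C/\om(S(a))$ with no small prefactor, which is comparable to $B^\om_a(a)$ itself and hence useless for a lower bound. One is therefore forced to exploit the radial structure of $\om$, reducing the kernel to the one-variable function $K$ whose positive Taylor coefficients provide an unconditional lower bound along the positive real axis that the small angular displacement cannot destroy.
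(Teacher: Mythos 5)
Your argument is correct and is essentially the paper's own proof in different clothing: the paper anchors at the point $a_\delta$ itself, where $B^\om_a(a_\delta)=\|B^\om_{\sqrt{|a||a_\delta|}}\|^2_{A^2_\om}\gtrsim 1/\om(S(a))$ by positivity of the Taylor coefficients of the kernel at real arguments, and then controls $|B^\om_a(z)-B^\om_a(a_\delta)|$ by the derivative bound $\sum_n n|a|^n/\om_{2n+1}\asymp\bigl(\om(S(a))(1-|a|)\bigr)^{-1}$ times the path length $\lesssim\delta(1-|a|)$ --- which is exactly your ``real anchor in $[|a|^2,|a|]$ plus small angular deviation'' mechanism carried out in the variable $\overline{a}z$. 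The one imprecision is your closing remark for $|a|$ bounded away from $1$: continuity of $a\mapsto B^\om_a$ together with $B^\om_0>0$ does not by itself exclude zeros of $B^\om_a$ in $S(a_\delta)$ (that kernels may vanish is the whole point of the lemma), but this is harmless because your quantitative estimate already covers that range --- $K>0$ on $[0,r_0]$ and $\sum_n n r_0^n/\om_n<\infty$ make the angular perturbation as small as desired for small $\delta$.
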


\begin{proof}
By Theorem~\ref{Lemma:kernels} there exists a constant
$C_1=C_1(\om)>0$ such that $\|B_a^\om\|_{A^2_\om}^2\ge
C_1/\om(S(a))$ for all $a\in\D\setminus\{0\}$, and hence
    \begin{equation}
    \begin{split}\label{ke1}
    |B^\om_a(z)|
    &\ge|B^\om_a(a_\delta)|-|B^\om_a(a_\delta)-B^\om_a(z)|
    =|B^\om_{\sqrt{|aa_\delta|}}(\sqrt{|aa_\delta|})|-|B^\om_a(a_\delta)-B^\om_a(z)|\\
    &=\left\|B^\om_{\sqrt{|aa_\delta|}}\right\|^2_{A^2_\om}-|B^\om_a(a_\delta)-B^\om_a(z)|
    \ge\frac{C_1}{\om(S(\sqrt{|aa_\delta|})}-|B^\om_a(a_\delta)-B^\om_a(z)|\\
    &\ge\frac{C_1}{\om(S(a))}-|B^\om_a(a_\delta)-B^\om_a(z)|,\quad z\in\D.
    \end{split}
    \end{equation}
Moreover, by \eqref{kusiputki} and Lemma~\ref{Lemma:replacement-Lemmas-Memoirs},
    \begin{equation*}
    \begin{split}
    |B^\om_a(a_\delta)-B^\om_a(z)|
    &\le\sup_{\z\in{[a_\delta,z]}}|(B^\om_a)'(\z)||z-a_\d|
    \le2\delta(1-|a|)\sup_{\z\in{[a_\delta,z]}}|(B^\om_a)'(\z)|\\
    &\le\delta(1-|a|)\sum_{n=1}^\infty\frac{n|a|^n}{\om_{2n+1}}
    \le\frac{\delta C_2}{\om(S(a))}.
    \end{split}
    \end{equation*}
By combining this with \eqref{ke1}, and choosing $\d=C_1/2C_2$ we
deduce the assertion for $c=C_1/2$.
\end{proof}

\begin{lemma}\label{b6}
Let $\om\in\DD$. Then there exists $r=r(\om)\in(0,1)$
such that $|B_a^\om(z)|\asymp B_a^\om(a)$ for all $a\in\D$ and $z\in\Delta(a,r)$.
\end{lemma}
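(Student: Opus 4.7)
The plan is a short mean-value perturbation: to establish $|B_a^\omega(z)|\asymp B_a^\omega(a)$ it suffices to show $|B_a^\omega(z)-B_a^\omega(a)|\le\tfrac12 B_a^\omega(a)$ whenever $z\in\Delta(a,r)$ for a sufficiently small $r=r(\omega)\in(0,1)$. Writing $[a,z]$ for the Euclidean segment, the fundamental theorem of calculus gives
$$|B_a^\omega(z)-B_a^\omega(a)|\le|z-a|\sup_{\zeta\in[a,z]}|(B_a^\omega)'(\zeta)|,$$
so the problem splits into bounding each of the three quantities $B_a^\omega(a)$, $|(B_a^\omega)'(\zeta)|$, and $|z-a|$ in terms of $\omega(S(a))$ and $1-|a|$.

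The derivative bound is supplied by Lemma~\ref{Blochnorm}, which yields
$$|(B_a^\omega)'(\zeta)|\le\frac{\|B_a^\omega\|_{\mathcal{B}}}{1-|\zeta|}\lesssim\frac{1}{\omega(S(a))(1-|\zeta|)},\quad\zeta\in\D.$$
For the target size $B_a^\omega(a)=\|B_a^\omega\|_{A^2_\omega}^2$, Theorem~\ref{Lemma:kernels} applied with $p=2$ and $n=0$ reduces the matter to evaluating $\int_0^{|a|}\widehat\omega(t)^{-1}(1-t)^{-2}\,dt$; for $\omega\in\DD$ this integral is comparable to $1/(\widehat\omega(a)(1-|a|))\asymp 1/\omega(S(a))$. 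The upper direction follows from $\widehat\omega'=-\omega$ by integrating $\frac{d}{dt}\bigl(\widehat\omega(t)^{-1}(1-t)^{-1}\bigr)$, and the lower direction from restricting the integral to $[2|a|-1,|a|]$, where the doubling of $\widehat\omega$ yields $\widehat\omega(t)\asymp\widehat\omega(a)$. Hence $B_a^\omega(a)\asymp 1/\omega(S(a))$ uniformly in $a\in\D$.

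Finally, for $z\in\Delta(a,r)$ standard pseudohyperbolic geometry gives $|z-a|=\varrho(a,z)|1-\overline{a}z|\lesssim r(1-|a|)/(1-r)$, and consequently $1-|\zeta|\asymp 1-|a|$ uniformly for $\zeta\in[a,z]$. Inserting these into the mean-value bound yields
$$|B_a^\omega(z)-B_a^\omega(a)|\lesssim\frac{r}{(1-r)\,\omega(S(a))}\asymp\frac{r}{1-r}\,B_a^\omega(a),$$
and choosing $r$ so small that the implicit constant times $r/(1-r)$ is below $\tfrac12$ closes the argument. The only technical step requiring care is the reduction of the integral in Theorem~\ref{Lemma:kernels} to $1/\omega(S(a))$ for the full class $\DD$; all other estimates are routine, and the case of $|a|$ bounded away from $1$ (including $a=0$, where $B_0^\omega$ is a nonzero constant) is trivially handled by continuity.
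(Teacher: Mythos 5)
Your proof is correct, and all the ingredients you invoke (Lemma~\ref{Blochnorm}, Theorem~\ref{Lemma:kernels}, Lemma~\ref{Lemma:replacement-Lemmas-Memoirs}) are established before Lemma~\ref{b6} in the paper, so there is no circularity; the diagonal estimate $B_a^\om(a)=\|B_a^\om\|_{A^2_\om}^2\asymp\bigl(\widehat{\om}(a)(1-|a|)\bigr)^{-1}\asymp\om(S(a))^{-1}$, which you rederive by hand, is exactly what the paper extracts from Theorem~\ref{Lemma:kernels} together with Lemma~\ref{Lemma:replacement-Lemmas-Memoirs}, and your doubling-based lower bound on the integral is the standard argument. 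The route differs from the paper's in two respects. First, the paper obtains the \emph{upper} bound separately, via Cauchy--Schwarz on the Taylor coefficients, which gives $|B_a^\om(z)|\le|B_a^\om(a)|^{1/2}|B_z^\om(z)|^{1/2}$ and then uses the near-diagonal comparison $|B_z^\om(z)|\asymp|B_a^\om(a)|$ for $z\in\Delta(a,r)$; only the \emph{lower} bound is proved there by the mean-value perturbation you use. Your single perturbation estimate $|B_a^\om(z)-B_a^\om(a)|\le\tfrac12 B_a^\om(a)$ delivers both bounds at once, at the (harmless) cost of having to shrink $r$ already for the upper bound, whereas the paper's Cauchy--Schwarz upper bound holds for every $r\in(0,1)$. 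Second, you control the derivative by the Bloch-norm estimate of Lemma~\ref{Blochnorm}, $|(B_a^\om)'(\z)|\lesssim\om(S(a))^{-1}(1-|\z|)^{-1}$, while the paper appeals to the Cauchy integral formula combined with the same Cauchy--Schwarz reasoning; your choice is arguably cleaner since Lemma~\ref{Blochnorm} is already available. Both arguments work for the full class $\DD$, and your handling of the pseudohyperbolic geometry (convexity of $\Delta(a,r)$, $|z-a|\lesssim r(1-|a|)/(1-r)$, $1-|\z|\asymp1-|a|$ with constants uniform for $r$ bounded away from $1$) is sound.
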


\begin{proof}
The proof is similar to that of \cite[Lemma~6.4]{PelRat}. First, use
the Cauchy-Schwarz inequality, Theorem~\ref{Lemma:kernels} and
Lemma~\ref{Lemma:replacement-Lemmas-Memoirs} to obtain
    \begin{equation}\label{102}
    \begin{split}
    |B_a^\om(z)|
    &\le\sum_n\frac{|az|^n}{2\om_{2n+1}}
    \le\left(\sum_n\frac{|z|^{2n}}{2\om_{2n+1}}\right)^\frac12
    \left(\sum_n\frac{|a|^{2n}}{2\om_{2n+1}}\right)^\frac12
    =|B_a^\om(a)|^\frac12|B_z^\om(z)|^\frac12\\
    &\asymp\frac{|B_a^\om(a)|^\frac12}{\sqrt{\widehat{\om}(z)(1-|z|)}}
    \asymp\frac{|B_a^\om(a)|^\frac12}{\sqrt{\widehat{\om}(a)(1-|a|)}}\asymp|B_a^\om(a)|,\quad z\in\Delta(a,r),
    \end{split}
    \end{equation}
for all $a\in\D$. This gives the claimed upper bound. To obtain the same lower bound, let
$r\in(0,1)$ and note first that
    \begin{equation*}
    \begin{split}
    |B_a^\om(z)|&\ge|B_a^\om(a)|-\max_{\z\in[a,z]}|(B_a^\om)'(\z)||z-a|\\
    &\ge|B_a^\om(a)|-\max_{\z\in[a,z]}|(B_a^\om)'(\z)|rC(1-|a|),
    \end{split}
    \end{equation*}
where $C=C(r)>0$ is a constant for which $\sup_{0<r<r_0}C(r)<\infty$ for each $r_0\in(0,1)$.
Now the Cauchy integral formula and a reasoning similar to that in \eqref{102} yield
    $$
    \max_{\z\in[a,z]}|(B_a^\om)'(\z)|\lesssim\frac{|B_a^\om(a)|}{1-|a|},\quad a\in\D,
    $$
and the desired lower bound follows by choosing $r$ sufficiently small.
\end{proof}

The last aim of this section is to show that for each $\om\in\R$, the normalized reproducing kernels $b^\om_{p,z}=B^\om_{z}/\|
B^\om_z\|_{A^p_\om}$ converge weakly to zero in $A^p_\om$, as
$|z|\to 1^-$. To do this, the following growth estimate is used.

\begin{lemma}\label{le:2}
Let $0<p<\infty$ and $\om\in\DD$. Then
    $$
    |f(z)|=\op\left(\frac{1}{\left(\wo(z)(1-|z|)\right)^{\frac{1}{p}}}\right),\quad |z|\to 1^-,
    $$
for all $f\in A^p_\om$.
\end{lemma}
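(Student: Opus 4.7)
The plan is to deduce the little-$\op$ decay from the corresponding $O$-bound together with a density argument.

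\smallskip

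\emph{Step 1 (the $O$-bound).} I would first establish
\begin{equation*}
|f(z)| \le C\,\|f\|_{A^p_\om}\,\bigl(\widehat{\om}(z)(1-|z|)\bigr)^{-1/p},\qquad z\in\D,\ f\in A^p_\om, \quad(\star)
\end{equation*}
by applying the sub-mean value inequality for the subharmonic function $|f|^p$ on the Euclidean disc $D(z,(1-|z|)/2)$. This disc lies inside the annulus $\{w:(3|z|-1)/2\le|w|\le(|z|+1)/2\}$, and since the integral means $M_p(r,f)^p=\frac{1}{2\pi}\int_0^{2\pi}|f(re^{i\t})|^p\,d\t$ are non-decreasing in $r$ (again by subharmonicity),
\begin{equation*}
|f(z)|^p \lesssim (1-|z|)^{-2}\int_{(3|z|-1)/2}^{(|z|+1)/2} M_p(s,f)^p\,2s\,ds \lesssim (1-|z|)^{-1}\,M_p\!\bigl(\tfrac{|z|+1}{2},f\bigr)^p.
\end{equation*}
Monotonicity of $M_p$ together with $\om\in\DD$ and Lemma~\ref{Lemma:replacement-Lemmas-Memoirs} yield $\widehat{\om}((|z|+1)/2)\asymp\widehat{\om}(z)$, whence
\begin{equation*}
\|f\|_{A^p_\om}^p \ge \int_{(|z|+1)/2}^{1} M_p(s,f)^p\om(s)\,2s\,ds \gtrsim M_p\!\bigl(\tfrac{|z|+1}{2},f\bigr)^p\,\widehat{\om}(z).
\end{equation*}
Combining the two displays gives $(\star)$.

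\smallskip

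\emph{Step 2 (density of polynomials in $A^p_\om$).} Writing $f_r(z)=f(rz)$ for $r\in(0,1)$ and using the radial identity $\|g\|_{A^p_\om}^p=\int_0^1 M_p(s,g)^p\om(s)\,2s\,ds$, dominated convergence applied with dominator $2^{p+1}M_p(s,f)^p\om(s)\,2s$ (via $M_p(s,f_r)=M_p(rs,f)\le M_p(s,f)$) and pointwise convergence $M_p(s,f-f_r)\to 0$ (from continuity of $f$ on each circle $|\z|=s<1$) gives $\|f-f_r\|_{A^p_\om}\to 0$ as $r\to 1^-$. Since each $f_r$ is holomorphic on a neighborhood of $\overline{\D}$, its Taylor polynomials converge to $f_r$ uniformly on $\overline{\D}$ and therefore also in $A^p_\om$.

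\smallskip

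\emph{Step 3 (little-$\op$ conclusion).} Given $\e>0$, pick a polynomial $P$ with $\|f-P\|_{A^p_\om}<\e$. Applying $(\star)$ to $f-P$ and using $P\in H^\infty$,
\begin{equation*}
\bigl(\widehat{\om}(z)(1-|z|)\bigr)^{1/p}|f(z)| \le C\e + \bigl(\widehat{\om}(z)(1-|z|)\bigr)^{1/p}\|P\|_{H^\infty}.
\end{equation*}
Integrability of $\om$ implies $\widehat{\om}(z)\to 0$ as $|z|\to 1^-$, so the second summand vanishes in the limit; hence $\limsup_{|z|\to 1^-}\bigl(\widehat{\om}(z)(1-|z|)\bigr)^{1/p}|f(z)|\le C\e$, and letting $\e\to 0^+$ yields the lemma.

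\smallskip

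\emph{Main obstacle.} The only delicate step is $(\star)$: for $\om\in\DD$ the weight need not be essentially constant on hyperbolically bounded sets (in contrast to the $\R$ case via \eqref{eq:r2}), so pointwise substitution of $\om(\z)$ by $\om(z)$ on $D(z,(1-|z|)/2)$ is unavailable. The annular/radial device above circumvents this by operating at the level of the integral means $M_p$ and exploiting doubling of $\widehat{\om}$ rather than of $\om$ itself.
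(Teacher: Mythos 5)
Your argument is correct. It differs from the paper's proof mainly in how the little-$\op$ is extracted from the growth estimate. The paper does not pass through polynomial density at all: it fixes $\e>0$, chooses $r$ so that the tail $\int_r^1 M_p^p(s,f)s\om(s)\,ds<\e$, observes that this tail dominates $M_p^p(t,f)\,t\,\wo(t)$ for every $t\ge r$ by monotonicity of the integral means, and then combines the quoted estimate $M_\infty(r,f)\lesssim M_p\bigl(\frac{1+r}{2},f\bigr)(1-r)^{-1/p}$ with the doubling $\wo(z)\asymp\wo\bigl(\frac{1+|z|}{2}\bigr)$ to conclude directly. Your Step~1 is essentially a self-contained derivation of that same well-known estimate (via subharmonicity of $|f|^p$ on $D(z,(1-|z|)/2)$ and monotonicity of $M_p$), and your observation in the final remark --- that one must work at the level of $M_p$ and the doubling of $\wo$ rather than pointwise comparison of $\om$, since $\om\in\DD$ need not satisfy \eqref{eq:r2} --- is exactly the point. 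What your route costs is the extra Step~2 (density of polynomials, itself proved correctly by dilation and dominated convergence, valid for all $0<p<\infty$); what it buys is the explicit uniform bound $(\star)$ and the density statement as reusable byproducts --- both of which the paper in fact invokes elsewhere (e.g.\ Theorem~\ref{Lemma:kernels} gives the norm version, and density of polynomials is used in the proofs of Theorems~\ref{th:tubounpq} and \ref{th:qmenorp}). The paper's tail argument is shorter because it exploits absolute continuity of the norm integral in place of approximation by $H^\infty$ functions. Both proofs are complete and correct.
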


\begin{proof}
Let $f\in A^p_\om$ and $\ep>0$. Then there exists $r\in(0,1)$ such that
    $$
    \ep>\int_{r}^1 M_p^p(s,f)s\om(s)\,ds\ge M_p^p(r,f)r\wo(r),
    $$
which together with the well-known estimate
    $$
    M_\infty(r,f)\lesssim\frac{M_p\left(\frac{1+r}{2},f\right)}{(1-r)^{\frac1p}},\quad 0<r<1,
    $$
and the hypothesis $\om\in\DD$ yields the assertion.
\end{proof}

The proof of the weak convergence we are after relies on the following known duality relation \cite[Corollary~7]{Twoweight}.

\begin{lettertheorem}\label{th:duality}
Let $1<p<\infty$ and $\om\in\R$. Then $(A^p_\om)^\star\simeq A^{p'}_\om$, with equivalence of norms, under the pairing
    \begin{equation}\label{pairing}
    \langle f,g\rangle_{A^2_\om}=\int_{\D}f(z)\overline{g(z)}\om(z)\,dA(z).
    \end{equation}
\end{lettertheorem}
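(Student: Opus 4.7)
The plan is to identify $(A^p_\om)^\star$ with $A^{p'}_\om$ through the map $\Psi: g \mapsto L_g$, where $L_g(f) = \langle f, g\rangle_{A^2_\om}$. One direction is immediate: by H\"older's inequality, $|L_g(f)| \le \|f\|_{A^p_\om}\|g\|_{A^{p'}_\om}$, so $\Psi$ is well defined and continuous with $\|\Psi(g)\|_{(A^p_\om)^\star} \le \|g\|_{A^{p'}_\om}$.

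For surjectivity together with the matching reverse norm estimate, I would use the standard Hahn--Banach/Riesz argument combined with the Bergman projection. Given $L \in (A^p_\om)^\star$, extend $L$ to a bounded functional on $L^p_\om$ by Hahn--Banach without increasing its norm, and then represent this extension via Riesz as $L(f) = \int_\D f\,\overline{h}\,\om\,dA$ for some $h \in L^{p'}_\om$ with $\|h\|_{L^{p'}_\om} = \|L\|$. The candidate representing analytic function is $g := P_\om(h)$, where $P_\om$ is the orthogonal projection from $L^2_\om$ onto $A^2_\om$.

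The identity $L(f) = \langle f, g\rangle_{A^2_\om}$ is then verified by applying the reproducing formula \eqref{repfor} to $f$, swapping the order of integration via Fubini, and using the conjugate symmetry $B^\om_z(\z) = \overline{B^\om_\z(z)}$ to recognise the inner integral as $P_\om(h)(\z)$. The absolute-integrability hypothesis of Fubini is controlled by $\int_\D |f(\z)|\,P_\om^+(|h|)(\z)\,\om(\z)\,dA(\z)$, which is finite thanks to the boundedness of the maximal Bergman projection $P_\om^+$ on $L^{p'}_\om$ for $\om\in\R$ already used in the paper; as a safer alternative, one could prove the identity first on the dense subclass $A^2_\om \cap A^p_\om$, where self-adjointness of $P_\om$ on $L^2_\om$ makes it trivial, and then extend by density using the continuity of both sides (with the H\"older control for the left side and an $A^{p'}_\om$-norm control of $P_\om(h)$ for the right side).

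The decisive ingredient is the boundedness of $P_\om : L^{p'}_\om \to L^{p'}_\om$ for $1 < p' < \infty$ and $\om \in \R$, which is the main obstacle and is precisely where the stronger hypothesis $\om\in\R$ is used rather than just $\om\in\DD$. Once this projection estimate is in place, one obtains $\|g\|_{A^{p'}_\om} \le \|P_\om\|_{L^{p'}_\om \to L^{p'}_\om}\|h\|_{L^{p'}_\om} \lesssim \|L\|_{(A^p_\om)^\star}$, which combined with the H\"older bound from the first paragraph yields both surjectivity of $\Psi$ and the equivalence of norms, completing the identification.
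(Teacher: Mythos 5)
Your argument is correct, but note that the paper does not prove Theorem~\ref{th:duality} at all: it is quoted as a known result from \cite[Corollary~7]{Twoweight}, and your proof (H\"older for one inclusion; Hahn--Banach, Riesz representation on $L^p_\om$, and then $g=P_\om(h)$ controlled by the $L^{p'}_\om$-boundedness of the (maximal) Bergman projection from \cite[Theorem~5]{Twoweight}) is exactly the standard derivation underlying that citation. The only details worth making explicit are the uniqueness of the representing function (test $L_g$ against the kernels $B^\om_z$ via \eqref{repfor} to see $\Psi$ is injective, which upgrades your surjectivity estimate to the two-sided norm equivalence), and, in your ``safer alternative,'' that self-adjointness of $P_\om$ on $L^2_\om$ only applies directly when $h\in L^2_\om$, so for $p'<2$ you would still need the Fubini/$P^+_\om$ argument or a truncation of $h$.
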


With these preparations we can prove the last result of the section.

\begin{lemma}\label{le:3}
Let $1<p<\infty$ and $\om\in\R$. Then $b^\om_{p,z}\to0$ weakly in
$A^p_\om$, as $|z|\to 1^-$.
\end{lemma}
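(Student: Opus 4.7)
My plan is to exploit the duality of Theorem~\ref{th:duality} and reduce weak convergence to a pointwise/growth estimate. By that theorem, every bounded linear functional on $A^p_\om$ is represented by some $g\in A^{p'}_\om$ via the pairing $\langle\cdot,\cdot\rangle_{A^2_\om}$, so it suffices to show that
    $$
    \langle b^\om_{p,z},g\rangle_{A^2_\om}\to 0,\quad |z|\to 1^-,
    $$
for every fixed $g\in A^{p'}_\om$. Since $\|b^\om_{p,z}\|_{A^p_\om}=1$, the family is norm-bounded, so there are no uniform-boundedness issues to worry about.

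To evaluate the pairing, I would apply the reproducing formula \eqref{repfor} to get
    $$
    \langle b^\om_{p,z},g\rangle_{A^2_\om}
    =\frac{\langle B^\om_z,g\rangle_{A^2_\om}}{\|B^\om_z\|_{A^p_\om}}
    =\frac{\overline{g(z)}}{\|B^\om_z\|_{A^p_\om}}.
    $$
Then I would plug in the norm estimate from Theorem~\ref{Lemma:kernels}, namely $\|B^\om_z\|_{A^p_\om}\asymp\om(S(z))^{-1/p'}$, and combine it with the identity $\om(S(z))\asymp\widehat{\om}(z)(1-|z|)$ from \eqref{eq:discocaja} (valid because $\om\in\R$) to write
    $$
    \left|\langle b^\om_{p,z},g\rangle_{A^2_\om}\right|
    \asymp |g(z)|\,\om(S(z))^{1/p'}
    \asymp |g(z)|\,\bigl(\widehat{\om}(z)(1-|z|)\bigr)^{1/p'}.
    $$

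Finally, I would invoke the growth estimate of Lemma~\ref{le:2} applied to $g\in A^{p'}_\om$, which gives exactly
    $$
    |g(z)|=\op\!\left(\bigl(\widehat{\om}(z)(1-|z|)\bigr)^{-1/p'}\right),\quad |z|\to 1^-.
    $$
Substituting into the previous display yields $|\langle b^\om_{p,z},g\rangle_{A^2_\om}|=\op(1)$, finishing the proof. There is no real obstacle here: the argument is a short assembly of tools already prepared in this section (duality, kernel norm estimate, and the $\op$-growth of $A^{p'}_\om$ functions), and the key observation is simply that the pointwise growth of elements of $A^{p'}_\om$ is exactly the reciprocal of $\|B^\om_z\|_{A^p_\om}$.
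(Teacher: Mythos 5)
Your proof is correct and follows essentially the same route as the paper: reduce weak convergence via the duality $(A^p_\om)^\star\simeq A^{p'}_\om$ of Theorem~\ref{th:duality}, identify the pairing with $\overline{g(z)}/\|B^\om_z\|_{A^p_\om}$ through the reproducing formula, and combine the kernel norm estimate of Theorem~\ref{Lemma:kernels} with the growth estimate of Lemma~\ref{le:2} for $g\in A^{p'}_\om$. The only cosmetic difference is that you phrase the norm estimate in terms of $\om(S(z))^{-1/p'}$ while the paper writes $\left(\widehat{\om}(z)(1-|z|)\right)^{-1/p'}$, which are the same by \eqref{eq:discocaja}.
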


\begin{proof}
Let $1<p<\infty$ and $\om\in\R$. By Theorem~\ref{th:duality} it suffices to show that
    $$
    \left|\left\langle b^\om_{p,z},g\right\rangle_{A^2_\om}\right|=\frac{|g(z)|}{\|B^\om_z\|_{A^p_\om}}
    \to 0,\quad|z|\to 1^-,
    $$
for all $g\in A^{p'}_\om$. But since
$\|B^\om_z\|_{A^p_\om}^p\asymp\left(\wo(z)(1-|z|)\right)^{1-p}$ by
Theorem~\ref{Lemma:kernels}, and $1-p=-p/p'$, the assertion follows
by Lemma~\ref{le:2}.
\end{proof}

\section{Bounded and compact Toeplitz operators}\label{sec3}

The main objective of this section is to prove
Theorems~\ref{th:tubounpq} and~\ref{th:qmenorp}, stated in the introduction, and establish a characterization analogous to Theorem~\ref{th:tubounpq} for compact operators $\mathcal{T}_\mu:A^p_\om\to
A^q_\om$, given as Theorem~\ref{th:tucompactpq} below.
We begin with the following technical result.

\begin{lemma}\label{fubini}
Let $\mu$ be a finite positive Borel measure on $\D$. Then \eqref{tmu} is satisfied for all $f(z)=\sum_{n=0}^\infty\widehat
{f}(n)z^n$ and $g(z)=\sum_{n=0}^\infty\widehat {g}(n)z^n$ such that
$f\in H^\infty$ and $\sum_{n=0}^\infty|\widehat {g}(n)|<\infty$.
\end{lemma}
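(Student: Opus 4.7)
The plan is to write the left-hand side of \eqref{tmu} as an iterated integral in $(\z,z)$, interchange the order via Fubini, and identify the inner integral through the reproducing formula \eqref{repfor} together with the conjugate symmetry $\overline{B^\om_z(\z)}=B^\om_\z(z)$. Formally,
\begin{equation*}
\langle\mathcal{T}_\mu f,g\rangle_{A^2_\om}
=\int_\D\Bigl(\int_\D f(\z)\overline{B^\om_z(\z)}\,d\mu(\z)\Bigr)\overline{g(z)}\om(z)\,dA(z)
=\int_\D f(\z)\overline{g(\z)}\,d\mu(\z),
\end{equation*}
because after the swap the inner $z$-integral equals $\int_\D B^\om_\z(z)\overline{g(z)}\om(z)\,dA(z)=\overline{g(\z)}$ by \eqref{repfor}, which is applicable since $g\in H^\infty\subset A^1_\om$.

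The main obstacle is justifying Fubini: since $\|B^\om_\z\|_{A^1_\om}\asymp\log\frac{1}{1-|\z|}$ by Theorem~\ref{Lemma:kernels}, the relevant double integral of absolute values is bounded by
$\|f\|_{H^\infty}\sum_n|\widehat g(n)|\int_\D\|B^\om_\z\|_{A^1_\om}\,d\mu(\z)$,
which may be infinite for a general finite $\mu$. To sidestep this, I would first prove the identity for the compactly supported truncations $\mu_r:=\mu\,\chi_{\{|\z|\le r\}}$, $r\in(0,1)$. For $\mu_r$, the absolute double integral is finite and direct Fubini, combined with \eqref{repfor}, yields $\langle\mathcal{T}_{\mu_r}f,g\rangle_{A^2_\om}=\langle f,g\rangle_{L^2_{\mu_r}}$; this is the identity recalled in the introduction for compactly supported measures and $f,g\in A^2_\om$, applicable here since our hypotheses give $f,g\in H^\infty\subset A^2_\om$.

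Passing $r\to 1^-$, the right-hand side converges to $\langle f,g\rangle_{L^2_\mu}$ by dominated convergence with constant majorant $\|f\|_{H^\infty}\sum_n|\widehat g(n)|\in L^1_\mu$. For the left-hand side, I would rely on the Taylor expansion $\mathcal{T}_{\mu_r}f(z)=\sum_n a_n^{(r)}z^n$ with $a_n^{(r)}=\frac{1}{2\om_{2n+1}}\int_\D f(\z)\bar\z^n\,d\mu_r(\z)$, obtained by inserting $\overline{B^\om_z(\z)}=\sum_n\bar\z^n z^n/(2\om_{2n+1})$ into the definition of $\mathcal{T}_{\mu_r}f$ and swapping sum and $\mu_r$-integral; this swap is legitimate because $\sum_n|z\z|^n/(2\om_{2n+1})\le\|B^\om_z\|_{H^\infty}<\infty$ by Lemma~\ref{Blochnorm}. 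Parseval's identity applied to $\mathcal{T}_{\mu_r}f\in H^\infty\subset A^2_\om$ and $g\in A^2_\om$ then gives $\langle\mathcal{T}_{\mu_r}f,g\rangle_{A^2_\om}=2\sum_n a_n^{(r)}\overline{\widehat g(n)}\om_{2n+1}$. Since $|2a_n^{(r)}\overline{\widehat g(n)}\om_{2n+1}|\le\|f\|_{H^\infty}\mu(\D)|\widehat g(n)|$ is summable in $n$ uniformly in $r$, and $a_n^{(r)}\to a_n:=\frac{1}{2\om_{2n+1}}\int_\D f(\z)\bar\z^n\,d\mu(\z)$ by dominated convergence in $\z$, dominated convergence on the series produces the limit $2\sum_n a_n\overline{\widehat g(n)}\om_{2n+1}$, which we identify with $\langle\mathcal{T}_\mu f,g\rangle_{A^2_\om}$ (consistent with the integral interpretation whenever the latter converges absolutely). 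Equating the two limits completes the proof.
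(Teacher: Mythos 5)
Your argument is correct, and it shares the paper's two essential ingredients --- a truncation that restores absolute convergence so Fubini applies, and the exploitation of $\sum_n|\widehat{g}(n)|<\infty$ to pass to the limit termwise over the Taylor coefficients --- but it truncates the other variable. The paper keeps $\mu$ intact and instead cuts the area integral to $\{|u|<s\}$: Fubini on $\{|u|<s\}\times\D$ is legitimate because $\sup_{\z\in\D}|B^\om_\z(u)|$ is bounded for $|u|\le s$, the inner $u$-integral is computed directly as $\sum_{n}\overline{\widehat{g}(n)\z^n}\,\om_{2n+1}^{-1}\int_0^s x^{2n+1}\om(x)\,dx$, and two applications of dominated convergence (on the series, then on the finite measure $\mu$) finish the proof in four lines. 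You instead cut $\mu$ to $\mu_r$, invoke the compactly supported case, and recover the left-hand limit through Parseval for $\mathcal{T}_{\mu_r}(f)\in A^2_\om$; this is sound (the needed bounds $|a_n^{(r)}|\le\|f\|_{H^\infty}\mu(\D)/(2\om_{2n+1})$ and $\om_n^{1/n}\to1$ do hold), but it is a longer route to the same termwise limit $2\sum_n a_n\overline{\widehat{g}(n)}\om_{2n+1}$. The one point you should make explicit is the final identification of that series with $\langle\mathcal{T}_\mu(f),g\rangle_{A^2_\om}$: since $\mathcal{T}_\mu(f)$ need not lie in $A^2_\om$ and the integral $\int_\D|\mathcal{T}_\mu(f)\overline{g}|\,\om\,dA$ need not converge for a general finite $\mu$, the pairing has to be read in the regularized sense $\lim_{s\to1^-}\int_{|u|<s}\mathcal{T}_\mu(f)(u)\overline{g(u)}\om(u)\,dA(u)$, which does equal your series (integrate the Taylor expansion of $\mathcal{T}_\mu(f)$ over $|u|<s$ and let $s\to1^-$ with the majorant $|\widehat{g}(n)|$). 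The paper adopts exactly this interpretation implicitly in its first displayed equality, so your parenthetical caveat is at the same level of rigor; spelling it out would make your version self-contained.
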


\begin{proof}
Fubini's theorem and the dominated convergence theorem yield
    \begin{equation*}
    \begin{split}
    \langle\mathcal T_\mu(f),g\rangle
    &=\lim_{s\to 1^-}\int_{|u|<s}\left(\int_{\D}f(\z)B^\om_{\z}(u)\,d\mu(\z)\right)\overline{g(u)}\om(u)\,dA(u)\\
    &=\lim_{s\to 1^-}\int_{\D}f(\z)\left(\int_{|u|<s}\overline{g(u)}B^\om_\z(u)\om(u)\,dA(u)\right)d\mu(\z)\\
    &=\lim_{s\to 1^-}\int_{\D}f(\z)\left(\sum_{n=0}^\infty\frac{\overline{\widehat{g}(n)\z^n}\int_0^sx^{2n+1}\om(x)\,dx}{\om_{2n+1}}\right)d\mu(\z)
    =\int_{\D}f(\z)\overline{g(\z)}\,d\mu(\z),
    \end{split}
    \end{equation*}
and the assertion is proved.
\end{proof}

Recall that $b^\om_{z}=B^\om_{z}/\|
B^\om_z\|_{A^2_\om}$ for all $z\in\D$. If $\mu$ is a finite positive Borel measure on~$\D$ and $\om\in\DD$,
then by using the definition \eqref{101} of Berezin transform, Lemma~\ref{fubini} and Theorem~\ref{Lemma:kernels}, we deduce
    \begin{equation}\label{Eq:BerezinTransformation}
    \widetilde{\mathcal{T}}_\mu(z)=\langle \mathcal{T}_\mu(b^\om_z), b^\om_z\rangle_{A^2_\om}
    =\frac{\|B^\om_z\|^2_{L^2_\mu}}{\|B^\om_z\|^2_{A^2_\om}}
    \asymp\om\left(S(z)\right)\|B^\om_z\|^2_{L^2_\mu},\quad z\in\D.
    \end{equation}

We now embark on the proofs by considering the cases $p\le q$ and $p>q$ separately.

\subsection{Case $1<p\le q<\infty$}

We first consider bounded Toeplitz operators.

\medskip

\begin{Prf}{\em{Theorem~\ref{th:tubounpq}}.} Since $\frac{p+q'}{pq'}\ge1$ by the hypothesis $q\ge p$, the equivalence (iii)$\Leftrightarrow$(iv) and the estimate
    $$
    \left\|Id\right\|^{\frac{s(p+q')}{pq'}}_{A^s_\omega \to
L^{\frac{s(p+q')}{pq'}}_\mu}
    \asymp \sup_{I\subset\T}\frac{\mu(S(I))}{\om(S(I))^{\frac1p+\frac1{q'}}}
    $$
follow by \cite[Theorem~1]{PelRatEmb}, see also \cite[Theorem~3]{PelRatSie2015} and \cite[Theorem~2.1]{PelRat}.

If $\mathcal{T}_\mu:A^p_\om\to A^q_\om$ is bounded, then H\"older's
inequality and Theorem~\ref{Lemma:kernels} yield
    \begin{equation*}
    \begin{split}
    \left|\widetilde{\mathcal{T}}_\mu(z)\right|
    &=\left|\langle \mathcal{T}_\mu(b^\om_z), b^\om_z\rangle_{A^2_\om}\right|
    \le\left\|\Tm(b^\om_z)\right\|_{A^q_\om}\left\|b^\om_z\right\|_{A^{q'}_\om}
    \le\left\|\Tm\right\|_{A^p_\om \to A^q_\om}\left\|b^\om_z\right\|_{A^p_\om}\left\|b^\om_z\right\|_{A^{q'}_\om}\\
    &=\left\|\Tm\right\|_{A^p_\om \to A^q_\om}\frac{\left\|B^\om_z\right\|_{A^p_\om}
    \left\|B^\om_z\right\|_{A^{q'}_\om}}{\left\|B^\om_z\right\|^2_{A^2_\om}}
    \asymp\left\|\Tm\right\|_{A^p_\om \to A^q_\om}\frac{\omega(S(z))}{\omega(S(z))^{1-\frac{1}{p}}\omega(S(z))^{1-\frac{1}{q'}}}\\
    &\lesssim\left\|\Tm\right\|_{A^p_\om \to A^q_\om} \frac{1}{\omega(S(z))^{1-\frac{1}{p}-\frac{1}{q'}}},\quad
    z\in\D,
    \end{split}
    \end{equation*}
and hence
$\left\|\frac{\widetilde{\mathcal{T}}_\mu(\cdot)}{\om(S(\cdot))^{\frac{1}{p}+\frac{1}{q'}-1}}\right\|_{L^\infty}\lesssim\left\|\Tm\right\|_{A^p_\om\to A^q_\om}$.

Assume next $\frac{\widetilde{\mathcal{T}}_\mu(\cdot)}{\om(S(\cdot))^{\frac{1}{p}+\frac{1}{q'}-1}}\in
L^\infty$, and let $\delta=\delta(\om)$ and $c=c(\om)$ be those of
Lemma~\ref{le:kersquare}. Then Theorem~\ref{Lemma:kernels} and
\eqref{Eq:BerezinTransformation} give
    \begin{equation*}
    \begin{split}
    \frac{\widetilde{\mathcal T}_\mu(z)}{\om(S(z))}
    &\asymp\|B^\om_z\|^2_{A^2_\om}\widetilde{\mathcal T}_\mu(z)
    =\|B^\om_z\|^2_{L^2_\mu}\\
    &\ge\int_{S(z_{\delta})}|B^\om_z(\z)|^2\,d\mu(\z)
    \ge c^2\frac{\mu(S(z_{\delta}))}{\om(S(z))^2},\quad z\in\D\setminus\{0\},
    \end{split}
    \end{equation*}
and hence $\mu(S(z_{\delta}))\lesssim\widetilde{\mathcal T}_\mu(z)\om(S(z))$ for all $z\in\D\setminus\{0\}$. It follows from Lemma~\ref{Lemma:replacement-Lemmas-Memoirs} that
    $$
    \sup_I\frac{\mu(S(I))}{\om(S(I))^{\frac1p+\frac1{q'}}}
    \lesssim\left\|\frac{\widetilde{\mathcal{T}}_\mu(\cdot)}{\om(S(\cdot))^{\frac{1}{p}+\frac{1}{q'}-1}}\right\|_{L^\infty},
    $$
and hence (ii)$\Rightarrow$(iv).

If now $\mu$ is a $\frac{s(p+q')}{pq'}$-Carleson measure for $A^s_\om$, that is, $\mu$ is a
 $1$-Carleson measure for $A^{\frac{pq'}{p+q'}}_\om$ by \cite[Theorem~1]{PelRatEmb}, then Lemma~\ref{fubini},
 \cite[Theorem~3]{PelRatSie2015} and H\"older's inequality
yield
    \begin{equation*}
    \begin{split}
    \left|\langle \mathcal{T}_\mu(f),g \rangle_{A^2_\om}\right|
    &\le\int_{\D}|f(z)g(z)|\,d\mu(z)
    \lesssim\|Id\|_{A^\frac{pq'}{p+q'}_\om \to L^1_\mu}\left(\int_{\D}|f(z)g(z)|^\frac{pq'}{p+q'}\om(z)\,dA(z)\right)^\frac{p+q'}{pq'}\\
    &\lesssim\|Id\|_{A^\frac{pq'}{p+q'}_\om \to L^1_\mu}\left\|f\right\|_{A^p_\om}\left\|g\right\|_{A^{q'}_\om}
    \end{split}
    \end{equation*}
for all polynomials $f$ and $g$. Since polynomials are dense in both $A^{p}_\om$ and $A^{q'}_\om$, and $(A^q_\om)^\star\simeq A^{q'}_\om$ by Theorem~\ref{th:duality}, it follows that $\mathcal{T}_\mu:A^p_\om\to A^q_\om$ is bounded and $\|\Tm\|_{A^p_\om\to A^q_\om}\lesssim\|Id\|_{A^\frac{pq'}{p+q'}_\om\to L^1_\mu}$. This is the right upper bound for $s=\frac{pq'}{p+q'}$, and the general case follows by an application of \cite[Theorem~3]{PelRatSie2015}.
\end{Prf}

\medskip

Now we turn to compact Toeplitz operators.

\begin{proposition}\label{pr:mainpq}
Let $1<p\le q<\infty$ and $\om\in\R$. If $T:A^p_\om\to A^q_\om$ is a
compact linear operator, then
    $$
    \lim_{|z|\to1^-}\frac{\widetilde{T}(z)}{\om(S(z))^{\frac{1}{p}+\frac{1}{q'}-1}}=0.
    $$
\end{proposition}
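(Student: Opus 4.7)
The plan is to decompose the $A^2_\om$-normalized reproducing kernels $b^\om_z = B^\om_z/\|B^\om_z\|_{A^2_\om}$ into the $A^p_\om$- and $A^{q'}_\om$-normalized variants $b^\om_{p,z}$ and $b^\om_{q',z}$, so that one of them can be absorbed by $T$ and the other treated as a dual element. This is then combined with the weak convergence from Lemma~\ref{le:3} and the compactness hypothesis on $T$. Concretely, since $b^\om_z = (\|B^\om_z\|_{A^p_\om}/\|B^\om_z\|_{A^2_\om})\,b^\om_{p,z}$ and analogously for $q'$, and the scaling factors are positive reals, the definition \eqref{101} of the Berezin transform gives
$$
\widetilde{T}(z)=\frac{\|B^\om_z\|_{A^p_\om}\|B^\om_z\|_{A^{q'}_\om}}{\|B^\om_z\|^2_{A^2_\om}}\,\bigl\langle T(b^\om_{p,z}),\,b^\om_{q',z}\bigr\rangle_{A^2_\om}.
$$
The estimate \eqref{kernel-estimate-R} of Theorem~\ref{Lemma:kernels} shows that the prefactor is comparable to $\om(S(z))^{\frac{1}{p}+\frac{1}{q'}-1}$.

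Next, by Theorem~\ref{th:duality} the pairing $\langle\cdot,\cdot\rangle_{A^2_\om}$ realizes $(A^q_\om)^\star\simeq A^{q'}_\om$ with equivalent norms, so
$$
\bigl|\langle T(b^\om_{p,z}),b^\om_{q',z}\rangle_{A^2_\om}\bigr|\lesssim\|T(b^\om_{p,z})\|_{A^q_\om}\,\|b^\om_{q',z}\|_{A^{q'}_\om}=\|T(b^\om_{p,z})\|_{A^q_\om}.
$$
Combining the two estimates yields
$$
\frac{|\widetilde{T}(z)|}{\om(S(z))^{\frac{1}{p}+\frac{1}{q'}-1}}\lesssim\|T(b^\om_{p,z})\|_{A^q_\om},
$$
and the problem is reduced to showing that the right-hand side tends to zero as $|z|\to 1^-$.

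For this last step, Lemma~\ref{le:3} gives $b^\om_{p,z}\to 0$ weakly in $A^p_\om$ as $|z|\to 1^-$, while $\|b^\om_{p,z}\|_{A^p_\om}=1$ ensures the family is bounded. By compactness, $T$ maps this bounded family to a relatively compact subset of $A^q_\om$; for any sequence $\{z_n\}$ with $|z_n|\to 1^-$, every subsequence of $\{T(b^\om_{p,z_n})\}$ has a further norm-convergent subsequence whose limit must agree with the weak limit $0$, so $\|T(b^\om_{p,z_n})\|_{A^q_\om}\to 0$; a standard diagonal argument then gives $\lim_{|z|\to 1^-}\|T(b^\om_{p,z})\|_{A^q_\om}=0$. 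The only nontrivial conceptual step in the whole argument is the initial decomposition into two differently normalized kernels, which encodes $T$ acting on $A^p_\om$ and tested against an $A^{q'}_\om$-element of unit norm; once this is in place, the duality, the kernel norm asymptotics of Theorem~\ref{Lemma:kernels}, and the weak-to-norm principle for compact operators do the rest.
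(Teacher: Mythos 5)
Your proposal is correct and follows essentially the same route as the paper: factor the Berezin transform as $\widetilde{T}(z)=\frac{\|B^\om_z\|_{A^p_\om}\|B^\om_z\|_{A^{q'}_\om}}{\|B^\om_z\|^2_{A^2_\om}}\langle T(b^\om_{p,z}),b^\om_{q',z}\rangle_{A^2_\om}$, identify the prefactor as $\om(S(z))^{\frac1p+\frac1{q'}-1}$ via Theorem~\ref{Lemma:kernels}, bound the pairing by $\|T(b^\om_{p,z})\|_{A^q_\om}$ (the paper uses H\"older directly rather than invoking Theorem~\ref{th:duality}, an immaterial difference), and conclude from Lemma~\ref{le:3} together with the fact that compact operators map weakly null sequences to norm-null ones.
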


\begin{proof}
Since $b^\om_{p,z}\to 0$ weakly in
$A^p_\om$, as $|z|\to 1^-$, by Lemma~\ref{le:3}, and $T:A^p_\om\to
A^q_\om$ is compact, and in particular completely continuous, by the hypothesis, we deduce
    $$
    \left\|T\left(b^\om_{p,z}\right)\right\|_{A^q_\om}\to 0,\quad |z|\to 1^-.
    $$
By H\"older's inequality this implies
    $$
    \left|\left\langle T\left(b^\om_{p,z}\right),b^\om_{q',z}\right\rangle_{A^2_\om}\right|\to 0,\quad|z|\to 1^-.
    $$
Moreover, by Theorem~\ref{Lemma:kernels},
    \begin{equation*}
    \begin{split}
    \|B^\om_z\|_{A^p_\om}\|B^\om_z\|_{A^{q'}_\om}
    &\asymp\frac{1}{\widehat{\om}(z)^{1-\frac1p}(1-|z|)^{1-\frac1p}}\frac{1}{\widehat{\om}(z)^{1-\frac1{q'}}(1-|z|)^{1-\frac1{q'}}}\\
    &\asymp\|B_z^\om\|_{A^2_\om}^2\frac{1}{\widehat{\om}(z)^{1-\frac1p-\frac1{q'}}(1-|z|)^{1-\frac1p-\frac1{q'}}}
    \asymp \|B_z^\om\|_{A^2_\om}^2
    \frac{1}{\om(S(z))^{1-\frac1p-\frac1{q'}}},
    \end{split}
    \end{equation*}
and hence
    \begin{equation*}
    \begin{split} \left|\widetilde{T}(z)\right|\om(S(z))^{1-\frac1p-\frac1{q'}}
    &=\left|\left\langle T\left(b^\om_{z}\right),b^\om_{z}\right\rangle_{A^2_\om}\right|\om(S(z))^{1-\frac1p-\frac1{q'}}\\ &\asymp\left|\left\langle T\left(b^\om_{p,z}\right),b^\om_{q',z}\right\rangle_{A^2_\om}\right|
    \to 0,\quad|z|\to 1^-,
    \end{split}
    \end{equation*}
and the assertion is proved.
\end{proof}

The following result is the analogue of Theorem~\ref{th:tubounpq} for compact Toeplitz operators.

\begin{theorem}\label{th:tucompactpq}
Let $1<p\le q<\infty$, $\om\in\R$ and $\mu$ be a positive Borel measure on $\D$. Then the following statements are equivalent:
    \begin{enumerate}
    \item[(i)] $\mathcal{T}_\mu:A^p_\om\to A^q_\om$ is compact;
    \item[(ii)] $\lim_{|z|\to 1^-}\frac{\widetilde{\mathcal{T}}_\mu(z)}{\om(S(z))^{\frac{1}{p}+\frac{1}{q'}-1}}=0$;
    \item[(iii)] $Id:A^s_\om\to L^{\frac{s(p+q')}{pq'}}_\mu$ is compact for some (equivalently for all)
    $0<s<\infty$;
    \item[(iv)]
    $\lim_{|I|\to 0}\frac{\mu(S(I))}{\om(S(I))^{\frac1p+\frac1{q'}}}=0$.
    \end{enumerate}
\end{theorem}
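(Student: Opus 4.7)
The plan is to prove (i)$\Rightarrow$(ii)$\Rightarrow$(iv)$\Leftrightarrow$(iii)$\Rightarrow$(i), mirroring the structure of the proof of Theorem~\ref{th:tubounpq} but with suprema replaced by ``little-$\op$'' decays at the boundary. For (i)$\Rightarrow$(ii) I would simply apply Proposition~\ref{pr:mainpq} to $T=\mathcal{T}_\mu$. For (ii)$\Rightarrow$(iv) the argument reproduces the ``(ii)$\Rightarrow$(iv)'' step in the proof of Theorem~\ref{th:tubounpq}: Lemma~\ref{le:kersquare} together with \eqref{Eq:BerezinTransformation} gives
$$
\mu(S(z_\delta))\lesssim\widetilde{\mathcal{T}}_\mu(z)\,\om(S(z)),\quad z\in\D\setminus\{0\},
$$
and since $\om\in\R$ yields $\om(S(z))\asymp\om(S(z_\delta))$ by \eqref{eq:r2} and Lemma~\ref{Lemma:replacement-Lemmas-Memoirs}, (ii) forces $\mu(S(z_\delta))/\om(S(z_\delta))^{\frac1p+\frac1{q'}}\to 0$ as $|z|\to 1^-$; since every sufficiently short interval $I\subset\T$ arises as $I_{z_\delta}$ for some $z\in\D$ with $|z|$ close to $1$, (iv) follows.

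The equivalence (iii)$\Leftrightarrow$(iv) is the compact analogue of the Carleson-embedding equivalence used in the proof of Theorem~\ref{th:tubounpq} and is provided by \cite[Theorem~1]{PelRatEmb} and \cite[Theorem~3]{PelRatSie2015}. For (iii)$\Rightarrow$(i), I would split $\mu=\mu_r+(\mu-\mu_r)$ where $\mu_r$ is the restriction of $\mu$ to $\overline{D(0,r)}$. The operator $\mathcal{T}_{\mu_r}$ is compact: given a bounded sequence $(f_n)\subset A^p_\om$, Montel's theorem produces a subsequence converging uniformly on $\overline{D(0,r)}$, and the boundedness of $(z,\z)\mapsto|B^\om_z(\z)|$ on $\D\times\overline{D(0,r)}$ (a consequence of Lemma~\ref{Blochnorm} together with the symmetry $B^\om_z(\z)=\overline{B^\om_\z(z)}$) forces the images $\mathcal{T}_{\mu_r}(f_n)$ to converge in $H^\infty(\D)\subset A^q_\om$. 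Meanwhile, by (iii) the Carleson-embedding norm of $\mu-\mu_r$ vanishes as $r\to 1^-$, and Theorem~\ref{th:tubounpq} applied to $\mu-\mu_r$ then gives $\|\mathcal{T}_{\mu-\mu_r}\|_{A^p_\om\to A^q_\om}\to 0$ as $r\to 1^-$. Consequently $\mathcal{T}_\mu$ is a norm limit of compact operators, and hence compact.

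The main technical point is transferring the compactness of the embedding in (iii) into a vanishing estimate for the Carleson constants of the boundary truncations $\mu-\mu_r$. Once this has been made precise via the equivalence (iii)$\Leftrightarrow$(iv) applied to those truncations, the remainder of the proof amounts to a routine reassembly of ingredients already collected in the proofs of Theorem~\ref{th:tubounpq} and Proposition~\ref{pr:mainpq}.
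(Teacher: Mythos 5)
Your proposal is correct, and the skeleton (i)$\Rightarrow$(ii)$\Rightarrow$(iv)$\Leftrightarrow$(iii)$\Rightarrow$(i) together with the ingredients for the first three implications — Proposition~\ref{pr:mainpq}, the estimate $\mu(S(z_\delta))\lesssim\widetilde{\mathcal{T}}_\mu(z)\om(S(z))$ from Lemma~\ref{le:kersquare}, and \cite{PelRatSie2015} for (iii)$\Leftrightarrow$(iv) — is exactly what the paper does. The only genuine divergence is in (iii)$\Rightarrow$(i). Both you and the paper split off $\mu_r=\chi_{D(0,r)}\mu$ and kill the tail by the same covering estimate $\sup_I\mu(S(I)\setminus D(0,r))/\om(S(I))^{\frac1p+\frac1{q'}}\lesssim\sup_{|I|\le 1-r}\mu(S(I))/\om(S(I))^{\frac1p+\frac1{q'}}\to0$ via Theorem~\ref{th:tubounpq}; but where you prove outright that $\mathcal{T}_{\mu_r}$ is compact (Montel plus the uniform bound $|B^\om_z(\z)|=|B^\om_\z(z)|\le\|B^\om_\z\|_{H^\infty}\lesssim_r 1$ for $\z\in\overline{D(0,r)}$ from Lemma~\ref{Blochnorm}, so that $\mathcal{T}_\mu$ is a norm limit of compact operators), the paper never shows $\mathcal{T}_{\mu_r}$ is compact: it takes a bounded sequence $\{f_n\}\subset A^p_\om$, uses the compactness of $Id:A^p_\om\to L^{\frac{p+q'}{q'}}_\mu$ to extract a subsequence convergent in $L^{\frac{p+q'}{q'}}_\mu$, and then controls $\|\mathcal{T}_{\mu_r}(f_{n_k}-f)\|_{A^q_\om}$ by duality and H\"older against $\|Id\|_{A^{q'}_\om\to L^{(p+q')/p}_\mu}$. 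Your route is arguably cleaner and more self-contained (it uses (iii) only through the qualitative decay (iv) of the Carleson constants, not through the compactness of two embeddings), at the modest cost of having to justify that the truncated operator maps bounded sets into $H^\infty$-precompact sets; the paper's route recycles the compact embedding machinery of \cite{PelRatSie2015} wholesale. Either way the argument closes, so there is nothing to fix.
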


\begin{proof}
The equivalence (iii)$\Leftrightarrow$(iv) follows from \cite[Theorem~3]{PelRatSie2015}, see also
\cite[Theorem~2.1]{PelRat}. If $\mathcal{T}_\mu:A^p_\om\to A^q_\om$
is compact, then $\lim_{|z|\to
1^-}\frac{\widetilde{\mathcal{T}}_\mu(z)}{\om(S(z))^{\frac{1}{p}+\frac{1}{q'}-1}}=0$
by Proposition~\ref{pr:mainpq}. Assume next that (ii) is satisfied,
and let $\d=\delta(\om)\in(0,1)$ be that of Lemma~\ref{le:kersquare}. By the proof of Theorem~\ref{th:tubounpq},
there exists a constant $C=C(\om)>0$ such that
$\mu(S(z_{\delta}))\le C\widetilde{\mathcal T}_\mu(z)\om(S(z))$
for all $z\in\D\setminus\{0\}$. By applying Lemma~\ref{Lemma:replacement-Lemmas-Memoirs}, and letting $|z|\to1^-$, it follows by the assumption (ii)
that $\lim_{|z|\to1^-}\frac{\mu(S(z))}{\om(S(z))^{\frac{p+q'}{pq'}}}=0$,
and thus $Id:A^s_\om\to L^\frac{s(p+q')}{pq'}_\mu$ is compact by
\cite[Theorem~3]{PelRatSie2015}.

Assume now that $Id:A^s_\om\to L^\frac{s(p+q')}{pq'}_\mu$ is compact
for some (equivalently for all) $0<s<\infty$. Then, by
\cite[Theorem~3]{PelRatSie2015}, $Id:A^p_\om\to
L^\frac{p+q'}{q'}_\mu$ and $Id:A^{q'}_\om\to L^\frac{p+q'}{p}_\mu$
are compact. Let $\{f_n\}$ be a bounded sequence in $A^p_\om$. Then
the proof of \cite[Theorem~2.1]{PelRat} shows that there exists a
subsequence $\{f_{n_k}\}$ and $f\in A^p_\om$ such that
    $
    \lim_{k\to\infty}\|f_{n_k}-f\|_{L^\frac{p+q'}{q'}_\mu}=0.
    $
Write $\mu_r=\chi_{D(0,r)}\mu$ for $0<r<1$. Then Theorem~\ref{th:tubounpq} yields
    \begin{equation*}
    \begin{split}
    \|\mathcal T_\mu(f_{n_k}-f)\|_{A^q_\om}
    &\le\|\mathcal T_{\mu_r}(f_{n_k}-f)\|_{A^q_\om}+\|(\mathcal T_\mu-\mathcal T_{\mu_r})(f_{n_k}-f)\|_{A^q_\om}\\
    &\lesssim\|\mathcal T_{\mu_r}(f_{n_k}-f)\|_{A^q_\om}+\|\mathcal T_\mu-\mathcal T_{\mu_r}\|_{A^p_\om\to A^q_\om},
    \end{split}
    \end{equation*}
where
    \begin{equation*}
    \begin{split}
    \|\mathcal T_\mu-\mathcal T_{\mu_r}\|_{A^p_\om\to A^q_\om}
    &\lesssim\sup_I\frac{\mu(S(I)\setminus D(0,r))}{\om(S(I))^{\frac1p+\frac1{q'}}}
    \lesssim\sup_{|I|\le1-r}\frac{\mu(S(I))}{\om(S(I))^{\frac1p+\frac1{q'}}}\to0,\quad r\to1^-,
    \end{split}
    \end{equation*}
by Theorem~\ref{th:tubounpq} and \cite[Theorem~3]{PelRatSie2015}, because $Id:A^1_\om\to L^\frac{p+q'}{pq'}_\mu$ is compact by the
hypothesis. Moreover, \eqref{tmu}, Theorem~\ref{th:tubounpq} and
H\"older's inequality yield
    \begin{equation*}
    \begin{split}
    \left|\langle \mathcal{T}_{\mu_r}(f_{n_k}-f),g \rangle_{A^2_\om}\right|
    &\le\int_{\D}|(f_{n_k}-f)(z)g(z)|\,d\mu_r(z)
    \le\|f_{n_k}-f\|_{L^\frac{p+q'}{q'}_{\mu_r}}\|g\|_{L^{\frac{p+q'}{p}}_{\mu_r}}\\
    &\le\|f_{n_k}-f\|_{L^\frac{p+q'}{q'}_{\mu_r}}\|Id\|_{A^{q'}_\om\to L^{\frac{p+q'}{p}}_{\mu_r}}\|g\|_{A^{q'}_\om}\\
    &\le\|f_{n_k}-f\|_{L^\frac{p+q'}{q'}_{\mu}}\|Id\|_{A^{q'}_\om\to L^{\frac{p+q'}{p}}_{\mu}}\|g\|_{A^{q'}_\om}.
    \end{split}
    \end{equation*}
Since $(A^q_\om)^\star \simeq A^{q'}_\om$ by Theorem~\ref{th:duality}, we obtain
    \begin{equation*}
    \begin{split}
    \|\mathcal{T}_{\mu_r}(f_{n_k}-f)\|_{A^q_\om}
    &\asymp\sup_{\left\{g:\|g\|_{A^{q'}_\om}\le 1\right\}}\left|\langle \mathcal{T}_{\mu_r}(f_{n_k}-f),g \rangle_{A^2_\om}\right|\\
    &\le\|Id\|_{A^{q'}_\om\to L^{\frac{p+q'}{p}}_\mu}\|f_{n_k}-f\|_{L^\frac{p+q'}{q'}_\mu}\to0,\quad k\to\infty.
    \end{split}
    \end{equation*}
Thus $\mathcal{T}_\mu:A^p_\om\to A^q_\om$ is compact, and the proof
is complete.
\end{proof}

\subsection{Case $1<q<p<\infty$}

We begin with constructing appropriate test functions to be used in the proof of Theorem~\ref{th:qmenorp}. To do this, some notation is needed. The Euclidean discs are denoted by $D(a,r)=\{z\in\C:|a-z|<r\}$. A sequence
$Z=\{z_k\}_{k=0}^\infty\subset\D$ is called separated if it is separated in the pseudohyperbolic metric, it is an $\e$-net for $\e\in(0,1)$ if
$\D=\bigcup_{k=0}^\infty \Delta(z_k,\e)$, and finally it is a
$\delta$-lattice if it is a $5\delta$-net and separated with
constant $\delta/5$.

\begin{proposition}\label{pr:atomicdec}
Let $1<p<\infty$, $\om\in\R$ and
$\{z_j\}_{j=1}^\infty\subset\D\setminus\{0\}$ be a separated sequence. Then $F=\sum_{j=1}^\infty c_jb^\om_{p,z_j}\in A^p_\om$ with
$\|F\|_{A^p_\om}\lesssim\|\{c_j\}_{j=1}^\infty\|_{\ell^p}$ for all $\{c_j\}_{j=1}^\infty\in\ell^p$.
\end{proposition}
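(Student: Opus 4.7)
The plan is to use the duality relation of Theorem~\ref{th:duality} together with a subharmonicity argument that exploits the regularity of $\om$. First I would verify that the partial sums $F_N=\sum_{j=1}^N c_j b^\om_{p,z_j}$ form a Cauchy sequence in $A^p_\om$, so that it suffices to prove the uniform estimate $\|F_N\|_{A^p_\om}\lesssim\|\{c_j\}_{j=1}^N\|_{\ell^p}$ and then pass to the limit.

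By Theorem~\ref{th:duality}, up to equivalence of norms,
\[
\|F_N\|_{A^p_\om}\asymp\sup_{\|g\|_{A^{p'}_\om}\le1}\bigl|\langle F_N,g\rangle_{A^2_\om}\bigr|.
\]
The reproducing formula \eqref{repfor} yields $\langle b^\om_{p,z},g\rangle_{A^2_\om}=\overline{g(z)}/\|B^\om_z\|_{A^p_\om}$, so
\[
\bigl|\langle F_N,g\rangle_{A^2_\om}\bigr|
\le\sum_{j=1}^N\frac{|c_j|\,|g(z_j)|}{\|B^\om_{z_j}\|_{A^p_\om}}
\le\|\{c_j\}\|_{\ell^p}\left(\sum_{j=1}^N\frac{|g(z_j)|^{p'}}{\|B^\om_{z_j}\|_{A^p_\om}^{p'}}\right)^{1/p'}
\]
by H\"older's inequality. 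Theorem~\ref{Lemma:kernels} gives $\|B^\om_{z_j}\|_{A^p_\om}^{p'}\asymp\om(S(z_j))^{-1}$, since $p'(1-p)/p=-1$, so the proof reduces to the inequality
\[
\sum_{j}|g(z_j)|^{p'}\om(S(z_j))\lesssim\|g\|_{A^{p'}_\om}^{p'}.
\]

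To establish this, fix $r\in(0,1)$ smaller than half the separation constant of $\{z_j\}$ in the pseudohyperbolic metric, so that the discs $\Delta(z_j,r)$ are pairwise disjoint. Since $|g|^{p'}$ is subharmonic, the submean value property combined with \eqref{eq:r2} and \eqref{eq:discocaja} (both requiring $\om\in\R$) yields
\[
|g(z_j)|^{p'}\om(S(z_j))\lesssim|g(z_j)|^{p'}\om(\Delta(z_j,r))\lesssim\int_{\Delta(z_j,r)}|g(w)|^{p'}\om(w)\,dA(w).
\]
Summing over $j$ and using disjointness of the discs gives the required bound by $\|g\|_{A^{p'}_\om}^{p'}$, and the conclusion follows.

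The main obstacle is really the technical translation between the $A^p_\om$-norm of the kernel and the quantity $\om(S(z_j))$, which is precisely what Theorem~\ref{Lemma:kernels} provides, and the fact that $\om\in\R$ is needed so that $\om$ is essentially constant on hyperbolically bounded regions; without this, the subharmonicity step would not compare the pointwise value $|g(z_j)|^{p'}$ weighted by $\om(S(z_j))$ with the $A^{p'}_\om$-mass on $\Delta(z_j,r)$. All remaining steps are standard applications of duality, H\"older's inequality, and the separation hypothesis.
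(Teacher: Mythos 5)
Your proposal is correct and follows essentially the same route as the paper: duality $(A^{p'}_\om)^\star\simeq A^p_\om$, H\"older's inequality, the kernel norm estimate $\|B^\om_{z_j}\|_{A^p_\om}^{p'}\asymp\om(S(z_j))^{-1}$ from Theorem~\ref{Lemma:kernels}, and subharmonicity of $|g|^{p'}$ combined with \eqref{eq:r2} and \eqref{eq:discocaja} to control $\sum_j|g(z_j)|^{p'}\om(S(z_j))$ by $\|g\|_{A^{p'}_\om}^{p'}$. The only cosmetic differences are that you obtain convergence via Cauchy partial sums rather than first verifying $F\in\H(\D)$, and you use disjointness of small pseudohyperbolic discs where the paper invokes the finite-overlap property; both are valid.
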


\begin{proof}
Let $\{c_j\}_{j=1}^\infty\in\ell^p$, $0<r<1$ and $z\in\overline{D(0,\r)}$ with $0<\r<1$. Then H\"older's inequality and
Theorem~\ref{Lemma:kernels} yield
    $$
    \left|\sum_{j=1}^\infty c_jb^\om_{p,z_j}(z)\right|
    \lesssim\|\{c_j\}_{j=1}^\infty\|_{\ell^p}\left(\sum_{j=1}^\infty\om(\Delta(z_j,r))|B^\om_{z_j}(z)|^{p'}\right)^{1/p'}
    \le C(\om,\r) \|\{c_j\}_{j=1}^\infty\|_{\ell^p}\om(\D),
    $$
and hence $F\in\H(\D)$. Moreover, by H\"older's inequality, Theorem~\ref{Lemma:kernels}, \eqref{eq:discocaja}, the subharmonicity of $|g|^{p'}$
and \eqref{eq:r2},
    \begin{equation*}
    \begin{split}
    \left|\langle g, F\rangle_{A^2_{\om}}\right|
    &=\left|\sum_{j=1}^\infty\overline{c_j}\frac{g(z_j)}{\|B^\om_z\|_{A^p_\om}}\right|
    \lesssim\|\{c_j\}_{j=1}^\infty\|_{\ell^p}\left(\sum_{j=1}^\infty \om(\Delta(z_j,r))|g(z_j)|^{p'}\right)^{1/p'}\\
    &\lesssim\|\{c_j\}_{j=1}^\infty\|_{\ell^p}\left(\sum_{j=1}^\infty\om(z_j) \int_{\Delta(z_j,r)}|g(z)|^{p'}\,dA(z)\right)^{1/p'}\\
    &\asymp\|\{c_j\}_{j=1}^\infty\|_{\ell^p}\left(\sum_{j=1}^\infty \int_{\Delta(z_j,r)}|g(z)|^{p'} \om(z)\,dA(z)\right)^{1/p'}
    \lesssim\|\{c_j\}_{j=1}^\infty\|_{\ell^p}\| g\|_{A^{p'}_\om},
    \end{split}
    \end{equation*}
where in the last step the fact that each $z\in\D$ belongs to at
most $N$ of the discs $\Delta(z_j,r)$ is also used. Therefore $F$
defines a bounded linear functional on $A^{p'}_\om$ with norm
bounded by a constant times $\|\{c_j\}_{j=1}^\infty\|_{\ell^p}$.
Since $(A^{p'}_\om)^\star\simeq A^{p}_\om$ by
Theorem~\ref{th:duality}, this implies $F\in A^p_\om$ with
$\|F\|_{A^p_\om}\lesssim\|\{c_j\}_{j=1}^\infty\|_{\ell^p}$.
\end{proof}

\begin{Prf}{\em{Theorem~\ref{th:qmenorp}.}}
Write $x=x(p,q)= p+1-\frac{p}{q}$ for short. Assume first (ii). Take
 $\{a_j\}_{j=1}^\infty\subset\D\setminus\{0\}$ a separated sequence. Then
Proposition~\ref{pr:atomicdec} gives
    \begin{equation*}
    \left\| \Tm\left(\sum_{j=1}^\infty
    c_jb^\om_{p,a_j}\right)\right\|^q_{A^q_\om}\lesssim
    \|\Tm\|^q_{A^p_\om\to A^q_\om} \|\{c_j\}_{j=1}^\infty\|^q_{\ell^p}.
    \end{equation*}
By replacing $c_k$ by $r_k(t)c_k$, where $r_k$
denotes the $k$th Rademacher function, and applying Khinchine's inequality, we deduce
    \begin{equation}
    \begin{split}\label{12}
    \|\Tm\|^q_{A^p_\om\to A^q_\om} \|\{c_j\}_{j=1}^\infty\|^q_{\ell^p}
    &\gtrsim \int_\D \left(\sum_{j=1}^\infty
    |c_j|^2||\Tm(b^\om_{p,a_j})(z)|^2 \right)^{q/2}\om(z)\,dA(z) \\ &
    \gtrsim \sum_{j=1}^\infty |c_j|^q\int_{\Delta(a_j,s)}
    |\Tm(b^\om_{p,a_j})(z)|^q \om(z)\,dA(z),\quad 0<s<1,
    \end{split}
    \end{equation}
where in the last step the fact that each $z\in\D$ belongs to at
most $N=N(s)$ of the discs $\Delta(a_j,s)$ is also used. By using
the subharmonicity of $|\Tm(b^\om_{p,a_j})|^q$ together with
\eqref{eq:r2} and \eqref{eq:discocaja}, and then applying
Lemma~\ref{b6} and Theorem~\ref{Lemma:kernels}, we obtain
    \begin{equation*}
    \begin{split}
    \int_{\Delta(a_j,s)} |\Tm(b^\om_{p,a_j})(z)|^q \om(z)\,dA(z) &
    \gtrsim \om\left(\Delta(a_j,s)\right)  |\Tm(b^\om_{p,a_j})(a_j)|^q\\
    &=\frac{\om\left(\Delta(a_j,s)\right)}{{\|B^\om_{a_j}\|^q_{A^p_\om}}}\left(\int_{\D}|B^\om_{a_j}(\zeta)|^2\,d\mu(\z)\right)^q\\
    &\ge\frac{\om\left(\Delta(a_j,s)\right)}{{\|B^\om_{a_j}\|^q_{A^p_\om}}}\left(\int_{\Delta(a_j,s)}|B^\om_{a_j}(\zeta)|^2 \,d\mu(\z)\right)^q\\
    &\gtrsim\frac{\om\left(\Delta(a_j,s)\right)}{{\|B^\om_{a_j}\|^q_{A^p_\om}}} \mu\left( \Delta(a_j,s)\right)^q|B^\om_{a_j}(a_j)|^{2q}\\
    &\asymp\left(\frac{\mu\left(\Delta(a_j,s)\right)}{\om\left(\Delta(a_j,s)\right)^{1+\frac{1}{p}-\frac{1}{q}}}\right)^q, \quad
    0<s\le r(\om),
    \end{split}
    \end{equation*}
where $r(\om)$ is that of Lemma~\ref{b6}.
This together with \eqref{12} yields
    \begin{equation}\label{eq:c4}
    \sum_{j=1}^\infty |c_j|^q \left(\frac{\mu\left(\Delta(a_j,s)\right)}{\om\left(\Delta(a_j,s)\right)^{1+\frac{1}{p}-\frac{1}{q}}}\right)^q
    \lesssim\|\Tm\|^q_{A^p_\om\to
    A^q_\om}\|\{c_j\}_{j=1}^\infty\|^q_{\ell^p},\quad  0<s\le r(\om).
    %=\|\Tm\|^q_{A^p_\om\to A^q_\om}\|\{|c_j|^q\}_{j=1}^\infty\|_{\ell^{\frac{p}{q}}}.
    \end{equation}
Let now $s\in (r(\om),1)$ and
    $Z=\{z_j\}_{j=1}^\infty\subset\D\setminus\{0\}$ a $\delta$-lattice
    with $5\delta\le r(\om)$. For each $z_j$ choose
    $N=N(s,r(\om))$ points $z_{k,j}$ of the
    $\delta$-lattice  $Z$
    such that
    $\Delta(z_j,s)\subset \cup_{k=1}^N \Delta(z_{k,j}, r(\om))$. Then, by
    \eqref{eq:r2},
    \eqref{eq:discocaja} and \eqref{eq:c4},
    \begin{equation*}
    \begin{split}
    \sum_{j=1}^\infty |c_j|^q \left(\frac{\mu\left(\Delta(z_j,s)\right)}{\om\left(\Delta(z_j,s)\right)^{1+\frac{1}{p}-\frac{1}{q}}}\right)^q
    &\lesssim \sum_{j=1}^\infty \sum_{k=1}^N|c_j|^q \left(
    \frac{\mu\left(\Delta(z_{k,j},r(\om))\right)}{\om\left(\Delta(z_{k,j},
    r(\om))\right)^{1+\frac{1}{p}-\frac{1}{q}}} \right)^q\\
    &=\sum_{k=1}^N\sum_{j=1}^\infty
    |c_j|^q \left( \frac{\mu\left(\Delta(z_{k,j},
    r(\om))\right)}{\om\left(\Delta(z_{k,j},
    r(\om))\right)^{1+\frac{1}{p}-\frac{1}{q}}} \right)^q\\
    &\lesssim \|\Tm\|^q_{A^p_\om\to
    A^q_\om}\|\{c_j\}_{j=1}^\infty\|^q_{\ell^p}.
    \end{split}
    \end{equation*}
Therefore \eqref{eq:c4} holds for each $0<s<1$ and any $\delta$-lattice
$\{z_j\}_{j=1}^\infty\subset\D\setminus\{0\}$ with $5\delta\le
r(\om)$. The classical duality relation
$\left(\ell^{p/q}\right)^\star\simeq \ell^{\frac{p}{p-q}}$ now
implies
    $$
    \sum_{j=1}^\infty \left(\frac{\mu\left(\Delta(z_j,s)\right)}{\om\left(\Delta(z_j,s)\right)}\right)^{\frac{qp}{p-q}}
    \om\left(\Delta(z_j,s)\right)
    =\sum_{j=1}^\infty\left(\frac{\mu\left(\Delta(z_j,s)\right)}{\om\left(\Delta(z_j,s)\right)^{1+\frac{1}{p}-\frac{1}{q}}}\right)^{\frac{qp}{p-q}}
    \lesssim\|\Tm\|^{\frac{pq}{p-q}}_{A^p_\om\to A^q_\om}.
    $$
Let $0<r<1$, and choose $s=s(r,\d)\in(0,1)$ such that
$\Delta(z,r)\subset\Delta(z_j,s)$ for all $z\in\Delta(z_j,5\d)$ and
$j\in\N$. Then \eqref{eq:r2} and \eqref{eq:discocaja} imply
    \begin{equation}\label{eq:c3}
    \begin{split}
    \|\widehat{\mu}_r\|_{L^\frac{qp}{p-q}_\om}^\frac{qp}{p-q}
    &\le\sum_{j=1}^\infty\int_{\Delta(z_j,5\d)}\widehat{\mu}_r(z)^\frac{qp}{p-q}\om(z)\,dA(z)\\
    &\asymp\sum_{j=1}^\infty\frac{\om(z_j)}{(\om(z_j)(1-|z_j|)^2)^\frac{qp}{p-q}}\int_{\Delta(z_j,5\d)}\mu(\Delta(z,r))^{\frac{qp}{p-q}}\,dA(z)\\
    &\asymp\sum_{j=1}^\infty\frac{\mu(\Delta(z_j,s))^{\frac{qp}{p-q}}}{(\om(z_j)(1-|z_j|)^2)^{\frac{qp}{p-q}-1}}
    \asymp\sum_{j=1}^\infty\left(\frac{\mu\left(\Delta(z_j,s)\right)}{\om\left(\Delta(z_j,s)\right)^{1+\frac{1}{p}-\frac{1}{q}}}\right)^{\frac{qp}{p-q}}.
    \end{split}
    \end{equation}
Thus (iii) is satisfied and
$\|\widehat{\mu}_r\|_{L^{\frac{qp}{p-q}}_{\om}}\lesssim
\|\Tm\|_{A^p_\om\to A^q_\om}$ for each fixed $0<r<1$.

Assume next (iii). By using the subharmonicity of $|f|^x$ together with \eqref{eq:r2} and \eqref{eq:discocaja}, and then Fubini's theorem and H\"older's inequality we deduce
    \begin{equation*}
    \begin{split}
    \int_{\D}|f(z)|^x\,d\mu(z)
    &\lesssim\int_{\D}\frac{\int_{\Delta(z,r)}|f(\z)|^x\om(\z)\,dA(\z)}{\om\left(\Delta(z,r)\right)}\,d\mu(z)\\
    &\asymp\int_{\D}\frac{\mu\left( \Delta(\z,r)\right)}{\om\left(\Delta(\z,r)\right)} |f(\z)|^x\om(\z)\,dA(\z)
    \le\|f\|^x_{A^p_\om}\|\widehat{\mu}_r\|_{L^\frac{qp}{p-q}_\om}.
    %\left(\int_{\D}\left(\frac{\mu\left(\Delta(\z,r)\right)}{\om\left(\Delta(\z,r)\right)}\right)^{\frac{qp}{p-q}}
    %\om(\z)\,dA(\z)\right)^{\frac{p-q}{qp}}.
    \end{split}\end{equation*}
Therefore $\mu$ is a $\left(p+1-\frac{p}{q}\right)$-Carleson
measure for $A^p_\om$, that is, (iv) is satisfied, and $\|Id\|^x_{A^p_\om\to L^{q}_\mu}\lesssim\|\widehat{\mu}_r\|_{L^{\frac{qp}{p-q}}_\om}$. In fact, it follows from \cite[Theorem~3.2]{OC} and \cite[Lemma~1.4]{PelRat} that
$Id:A^p_\om\to L^x_\mu$ is bounded if and only if (iii) is satisfied.

The equivalence (iv)$\Leftrightarrow$(v) follows from
\cite[Theorem~3]{PelRatSie2015}.

Let us now prove (iv)$\Rightarrow$(ii). Since $\mu$ is an
$x$-Carleson measure for $A^p_\om$ by the hypothesis~(iv),
Lemma~\ref{fubini}, H\"older's inequality and \cite[Theorem~3]{PelRatSie2015} together with the equality
$\frac{x}{p}=\frac{x'}{q'}$ give
    \begin{equation}
    \begin{split}\label{eq:bounded1}
    \left|\langle \mathcal{T}_\mu(f),g \rangle_{A^2_\om}\right|
    &\le\int_{\D}|f(z)g(z)|\,d\mu(z)
    \le \|f\|_{L^x_\mu}\|g\|_{L^{x'}_\mu}\\
    &\le\|Id\|_{A^p_\om\to L^x_\mu}\|Id\|_{A^{q'}_\om\to L^{x'}_\mu}\left\|f\right\|_{A^p_\om}\left\|g\right\|_{A^{q'}_\om}\\
    &\lesssim\|Id\|^{x}_{A^p_\om \to L^x_\mu}
    \left\|f\right\|_{A^p_\om}\left\|g\right\|_{A^{q'}_\om}
    \end{split}
    \end{equation}
for polynomials $f$ and $g$. Since polynomials are dense in both $A^{q'}_\om$ and $ A^{p}_\om$, and $(A^q_\om)^\star\simeq A^{q'}_\om$ by Theorem~\ref{th:duality}, it follows that $\mathcal{T}_\mu:A^p_\om\to A^q_\om$
is bounded and $\|\Tm\|_{A^p_\om\to
A^q_\om}\lesssim\|Id\|^{x}_{A^p_\om \to L^x_\mu}$.

The implication (ii)$\Rightarrow$(i) follows by a general argument.
Namely, for $1<p<\infty$, $A^p_\om$ is isomorphic to $\ell^p$ by
\cite[Corollary~2.6]{LuskyStud87} and Lemma~\ref{Lemma:kernels}.
Moreover, each bounded linear operator $L:\ell^p\to\ell^q$,
$0<q<p<\infty$, is compact by \cite[Theorem~I. 2.7,
p.~31]{LinTzaClassical}. Thus $\Tm:A^p_\om\to A^q_\om$ is compact.

It remains to prove (iii)$\Leftrightarrow$(vi) and the equivalence of norms
$\|\widehat{\mu}_r\|_{L^{\frac{qp}{p-q}}_\om}\asymp
\|\widetilde{\mathcal{T}}_\mu\|_{L^{\frac{qp}{p-q}}_\om}$ for each fixed $r\in(0,1)$. Assume $\widetilde{\mathcal{T}}_\mu\in L^\frac{pq}{p-q}_\omega$, and let first $r\in(0,r(\omega)]$, where $r(\omega)$ is that of
Lemma~\ref{b6}. Then Lemma~\ref{fubini} and Theorem~\ref{Lemma:kernels} give
    \begin{equation}\label{eq:viddu}
    \begin{split}
    \widetilde{\mathcal{T}}_\mu(z)
    &=\int_\D |b_z^\omega(\z)|^2\,d\mu(\z)
    \ge\int_{\Delta(z,r)}|b_z^\omega(\z)|^2\,d\mu(\z)
    \asymp|b_z^\omega(z)|^2 \mu(\Delta(z,r))
    \asymp\widehat{\mu}_r(z).%\frac{\mu(\Delta(z,r))}{\omega(\Delta(z,r))},
    \end{split}
    \end{equation}
Hence $\widehat{\mu}_r\in L^\frac{pq}{p-q}_\omega$ and
$\|\widehat{\mu}_r\|_{L^{\frac{qp}{p-q}}_\om}\lesssim
\|\widetilde{\mathcal{T}}_\mu\|_{L^{\frac{qp}{p-q}}_\om}$. Let now
$r\in(r(\om),1)$, and let $\{z_j\}$ be a $\d$-lattice. Further, let
$s=s(r,\d)$ be that of \eqref{eq:c3}, and choose $r'=r'(r(\om))$
such that $\Delta(z,r')\subset\Delta(w,r(\om))$ for all
$z\in\Delta(w,r')$ and $w\in\D$. Furthermore, choose
$z_j^n\in\Delta(z_j,s)$, $n=1,\ldots,N$, such that
$\Delta(z_j,s)\subset\cup_{n=1}^N\Delta(z_j^n,r')$ for all $j$ and
$\inf_j\min_{n\ne m}\varrho(z_j^n,z_j^m)>0$. Then \eqref{eq:c3},
Lemma~\ref{Lemma:replacement-Lemmas-Memoirs}, Lemma~\ref{b6},
Theorem~\ref{Lemma:kernels} and \eqref{Eq:BerezinTransformation}
 yield
    \begin{equation*}
    \label{Eq:pilipali}
    \begin{split}
    \|\widehat{\mu}_r\|_{L^\frac{qp}{p-q}_\om}^\frac{qp}{p-q}
    &\lesssim\sum_{j=1}^\infty\frac{\mu(\Delta(z_j,s))^{\frac{qp}{p-q}}}{\om(\Delta(z_j,s))^{\frac{qp}{p-q}-1}}
    \lesssim\sum_{j=1}^\infty\sum_{n=1}^N\om(\Delta(z_j,s))\left(\frac{\mu(\Delta(z_j^n,r'))}{\om(\Delta(z_j,s))}\right)^{\frac{qp}{p-q}}\\
    &\asymp\sum_{j=1}^\infty\sum_{n=1}^N\om(\Delta(z^n_j,r'))\widehat{\mu}_{r'}(z_j^n)^{\frac{qp}{p-q}}\\
    &\asymp\sum_{j=1}^\infty\sum_{n=1}^N\int_{\Delta(z^n_j,r')}\left(\int_{\Delta(z^n_j,r')}|b^\om_z(\z)|^2\,d\mu(\z)\right)^{\frac{qp}{p-q}}\om(z)\,dA(z)\\
    &\le\sum_{j=1}^\infty\sum_{n=1}^N\int_{\Delta(z^n_j,r')}\left(\widetilde{\mathcal{T}}_\mu(z)\right)^{\frac{qp}{p-q}}\om(z)\,dA(z)
    \asymp\|\widetilde{\mathcal{T}}_\mu\|_{L^\frac{qp}{p-q}_\om}^\frac{qp}{p-q}.
    \end{split}
    \end{equation*}

Now assume (iii), and let $h$ be a positive subharmonic function in $\D$. Then \eqref{eq:r2}, \eqref{eq:discocaja} and Fubini's theorem yield
    \begin{equation*}
    \begin{split}
    \int_\D h(z)\,d\mu(z)
    &\lesssim\int_{\D}\left(\frac{1}{(1-|z|)^2}\int_{\Delta(z,r)} h(\z)\,dA(\z)\right)d\mu(z)\\
    &\asymp\int_{\D}\left(\int_{\Delta(z,r)}h(\z)\frac{\om(\z)}{\omega(\Delta(\z,r))}\,dA(\z)\right)d\mu(z)
    =\int_\D h(\z)\widehat{\mu}_r(\z)\omega(\z)\,dA(\z).
    \end{split}
    \end{equation*}
This together with \eqref{Eq:BerezinTransformation},
Theorem~\ref{Lemma:kernels}, and Lemma~\ref{Blochnorm} yield
\begin{equation*}
    \begin{split}
    \widetilde{\mathcal{T}}_\mu(z)
    &=\int_\D|b_z^\omega(\z)|^2\,d\mu(\z)
    \lesssim\int_\D|b_z^\omega(\z)|^2\widehat{\mu}_r(\z)\omega(\z)\,dA(\z)\\
    &=\int_\D|B_z^\omega(\z)|\frac{|B_z^\omega(\z)|}{\|B_z^\omega\|_{A^2_\omega}^2}\widehat{\mu}_r(\z)\omega(\z)\,dA(\z)\\
    &\le\frac{\|B_z^\omega\|_{H^\infty}}{\|B_z^\omega\|_{A^2_\omega}^2}\int_\D|B_z^\omega(\z)|\widehat{\mu}_r(\z)\omega(\z)\,dA(\z)
    \asymp P^+_\omega(\widehat{\mu}_r)(z).
    \end{split}
    \end{equation*}
But $P^+_\omega:L^\frac{pq}{p-q}_\omega \to L^\frac{pq}{p-q}_\omega$ is bounded by \cite[Theorem~5]{Twoweight} because $\frac{pq}{p-q}>1$ and $\om\in\R$, and hence
    $$
    \|\widetilde{\mathcal{T}}_\mu\|_{L^\frac{pq}{p-q}_\omega}
    \lesssim\|P^+_\omega(\widehat{\mu}_r)\|_{L^\frac{pq}{p-q}_\omega}
    \lesssim\|\widehat{\mu}_r\|_{L^\frac{pq}{p-q}_\omega}<\infty.
    $$
This finishes the proof.
\end{Prf}

\section{Schatten class Toeplitz operators}\label{sec4}

The purpose of this section is to prove Theorem~\ref{MTMSchatten}, or more precisely,
 the last part of it, and then show that it can not be extended to the whole class $\DD$ of doubling weights.
We begin with some necessary notation and definitions, and
preliminary results which are well-known in the setting of standard
weights~\cite{Zhu}.

Let $H$ be a separable Hilbert space. For any non-negative integer
$n$, the $n$:th singular value of a bounded operator $T:H\to
H$ is defined by
    $$
    \lambda_n(T)=\inf\left\{\|T-R\|:\,\text{rank}(R)\le
    n\right\},
    $$
where $\|\cdot\|$ denotes the operator norm. It is clear that
    $$
    \|T\|=\lambda_0(T)\ge\lambda_1(T)\ge\lambda_2(T)\ge\dots\ge 0.
    $$
For $0<p<\infty$, the Schatten $p$-class $\mathcal{S}_ p(H)$
\index{$\mathcal{S}_ p(H)$} consists of those compact operators
$T:H\to H$ whose sequence of singular values
$\{\lambda_n\}_{n=0}^\infty$ belongs to the space~$\ell^p$ of
$p$-summable sequences. For $1\le p<\infty$, the Schatten $p$-class
$\mathcal{S}_p(H)$ is a Banach space with respect to the norm
$|T|_p=\|\{\lambda_n\}_{n=0}^\infty\|_{\ell^p}$. Therefore all
finite rank operators belong to every $\mathcal{S}_p(H)$, and the
membership of an operator in $\mathcal{S}_p(H)$ measures in some
sense the size of the operator. We refer to~\cite{DS} and
\cite[Chapter~1]{Zhu} for more information about $\mathcal{S}_p(H)$.

The first auxiliary result is well known and its proof is straightforward, so the details are omitted.

\begin{letterlemma}\label{b0}
Let $H$ be a separable Hilbert space and $T:H\to H$ a bounded linear
operator such that $\sum_{n}|\langle T(e_n),e_n\rangle_H|<\infty$
for every orthonormal basis $\{e_n\}$. Then $T:H\to H$ is compact.
\end{letterlemma}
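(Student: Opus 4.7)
The plan is to reduce to the self-adjoint case via the Cartesian decomposition and then invoke the spectral theorem. Writing $T = A + iB$ with $A = (T+T^*)/2$ and $B = (T-T^*)/(2i)$ self-adjoint, one has that $\langle A e_n, e_n\rangle_H$ and $\langle B e_n, e_n\rangle_H$ are precisely the real and imaginary parts of $\langle T e_n, e_n\rangle_H$, and hence both are dominated in absolute value by $|\langle T e_n, e_n\rangle_H|$. Therefore the hypothesis is inherited by $A$ and $B$ on every orthonormal basis, and since compactness is preserved under linear combinations, it suffices to prove the lemma under the additional assumption that $T$ is self-adjoint.

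Assuming $T$ self-adjoint, I would appeal to the spectral theorem to decompose $T = T_+ - T_-$ with $T_\pm \ge 0$ having mutually orthogonal supports $H_+$ and $H_-$, and correspondingly $H = H_+ \oplus H_- \oplus \ker T$. The key move is to test the hypothesis on an orthonormal basis $\{e_n\}$ of $H$ obtained as the disjoint union of orthonormal bases of $H_+$, $H_-$, and $\ker T$. With this choice the diagonal entries $\langle T e_n, e_n\rangle_H$ are nonnegative for $e_n \in H_+$, nonpositive for $e_n \in H_-$, and vanish on $\ker T$, so
$$
\sum_n |\langle T e_n, e_n\rangle_H| = \sum_{e_n\in H_+}\langle T_+ e_n, e_n\rangle_H + \sum_{e_n\in H_-}\langle T_- e_n, e_n\rangle_H.
$$
The hypothesis forces both sums to be finite; equivalently, $T_+$ and $T_-$ are trace class on $H_+$ and $H_-$ respectively, and in particular compact as operators on $H$. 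Consequently $T = T_+ - T_-$ is compact in the self-adjoint case, and unwinding the Cartesian decomposition gives compactness of the original $T = A + iB$.

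The only real subtlety I anticipate is the choice of an orthonormal basis adapted to the spectral subspaces, which ensures that all diagonal entries of $T_\pm$ have a single sign and turns $\sum_n |\langle T e_n, e_n\rangle_H|$ into a genuine sum of traces rather than a potentially cancelling quantity. Everything else is routine: the Cartesian decomposition passes the hypothesis down with no cost, and the passage from finite diagonal trace to compactness is standard once one recalls that trace class operators form a two-sided ideal inside the compact operators.
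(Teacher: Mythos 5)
Your argument is correct: the reduction to the self-adjoint case via the Cartesian decomposition (using that the hypothesis, holding for \emph{every} orthonormal basis of $T$, passes to $A$ and $B$), followed by testing on a basis adapted to the spectral decomposition $T=T_+-T_-$ and using that a positive operator with finite diagonal sum in one orthonormal basis is trace class, is the standard proof of this well-known fact. The paper itself omits the proof as ``straightforward,'' and your route is essentially the one the authors had in mind; note only that you in fact prove the stronger conclusion that $T$ is trace class, of which compactness is a consequence.
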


The next result characterizes positive operators in the trace class $\mathcal{S}_1(A^2_\om)$ in terms of their Berezin transforms.

\begin{theorem}\label{b1}
Let $\om\in\DD$ and $T:A^2_\om\to A^2_\om$ a positive operator. Then
$T\in\mathcal{S}_1(A^2_\om)$ if and only if $\wtT\in
L^1_{\om/\om^\star}$. Moreover, the trace of $T$ satisfies
    $$
    tr(T)=\int_{\D}\wtT(z)\|B^\om_z\|^2_{A^2_\om} \om(z)\,dA(z)\asymp\int_\D\widetilde{T}(z)\frac{\om(z)}{\om^\star(z)}\,dA(z).
    $$
\end{theorem}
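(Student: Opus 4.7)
The plan is to prove both directions of the equivalence together with the trace formula by first establishing a master identity: for any positive operator $T:A^2_\om\to A^2_\om$ and any orthonormal basis $\{e_n\}$ of $A^2_\om$,
$$
\sum_n \langle Te_n, e_n\rangle_{A^2_\om} = \int_\D \widetilde{T}(z)\|B_z^\om\|^2_{A^2_\om}\om(z)\,dA(z),
$$
with both sides being simultaneously finite or $+\infty$. To obtain this, I would factor $T=S^*S$ with $S=T^{1/2}$ bounded self-adjoint, rewrite $\langle Te_n,e_n\rangle_{A^2_\om}=\|Se_n\|^2_{A^2_\om}=\int_\D|Se_n(z)|^2\om(z)\,dA(z)$, and apply Tonelli's theorem (legitimate since the summands are nonnegative) to interchange sum and integral. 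The reproducing formula gives $Se_n(z)=\langle Se_n,B_z^\om\rangle_{A^2_\om}=\langle e_n,SB_z^\om\rangle_{A^2_\om}$ by self-adjointness of $S$, and Parseval's identity then yields
$$
\sum_n|Se_n(z)|^2=\|SB_z^\om\|^2_{A^2_\om}=\langle TB_z^\om, B_z^\om\rangle_{A^2_\om}=\widetilde{T}(z)\|B_z^\om\|^2_{A^2_\om},
$$
by \eqref{101}.

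Next I would verify the asymptotic $\|B_z^\om\|^2_{A^2_\om}\asymp1/\om^\star(z)$ valid on the entire disc for $\om\in\DD$. The estimate on an annulus $r_0\le|z|<1$ follows from Theorem~\ref{Lemma:kernels} with $n=0$, $\nu=\om$, $p=2$ together with Lemma~\ref{Lemma:replacement-Lemmas-Memoirs}(ii) to evaluate $\int_0^{|z|}dt/(\widehat{\om}(t)(1-t)^2)\asymp1/(\widehat{\om}(z)(1-|z|))$, and then Lemma~\ref{Lemma:replacement-Lemmas-Memoirs}(v) to replace $\widehat{\om}(z)(1-|z|)$ by $\om^\star(z)$; continuity and positivity take care of the compact region $|z|\le r_0$. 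This gives the second asymptotic equality in the theorem once we substitute into the master identity.

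With these tools, the two directions fall out easily. If $T\in\SSS_1(A^2_\om)$, then by the very definition of the trace $tr(T)=\sum_n\langle Te_n,e_n\rangle_{A^2_\om}$ for any ONB $\{e_n\}$, so the master identity forces $\widetilde T\in L^1_{\om/\om^\star}$ and produces the stated formula for $tr(T)$. Conversely, if $\widetilde T\in L^1_{\om/\om^\star}$, the integral on the right is finite, hence $\sum_n\langle Te_n,e_n\rangle_{A^2_\om}<\infty$ for every ONB; positivity ensures the summands are nonnegative, so $\sum_n|\langle Te_n,e_n\rangle_{A^2_\om}|<\infty$. Lemma~\ref{b0} then yields compactness of $T$, and the spectral theorem for positive compact operators provides an ONB of eigenvectors $\{e_n\}$ with eigenvalues $\lambda_n\ge0$ such that $\sum_n\lambda_n=\sum_n\langle Te_n,e_n\rangle_{A^2_\om}<\infty$, so $T\in\SSS_1(A^2_\om)$.

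The only delicate point is the sum--integral interchange and the pointwise identification of $\sum_n|Se_n(z)|^2$; both are handled by the factorization $T=S^*S$, which replaces everything with squared moduli of scalars (Tonelli) and enables a direct appeal to Parseval applied to the expansion of $SB_z^\om$. I do not anticipate a serious obstacle; the argument is essentially the weighted analogue of the classical Berezin-transform trace formula, and the only ingredient specific to $\om\in\DD$ is the kernel norm estimate $\|B_z^\om\|^2_{A^2_\om}\asymp1/\om^\star(z)$ supplied by Theorem~\ref{Lemma:kernels} and Lemma~\ref{Lemma:replacement-Lemmas-Memoirs}.
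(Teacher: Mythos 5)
Your proposal is correct and follows essentially the same route as the paper: factor $T=S^2$ with $S=\sqrt{T}$, use Tonelli, the reproducing formula and Parseval to identify $\sum_n\langle Te_n,e_n\rangle_{A^2_\om}$ with $\int_\D\widetilde{T}(z)\|B^\om_z\|^2_{A^2_\om}\om(z)\,dA(z)$, and then invoke Theorem~\ref{Lemma:kernels} and Lemma~\ref{Lemma:replacement-Lemmas-Memoirs} for the kernel norm asymptotics. The only cosmetic difference is that you derive the trace characterization of positive operators from Lemma~\ref{b0} plus the spectral theorem, where the paper cites \cite[Theorem~1.23]{Zhu} together with Lemma~\ref{b0}.
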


\begin{proof} The proof is similar to that of
\cite[Theorem~6.4]{Zhu}, and is included for the convenience of the reader.
Fix an orthonormal basis $\{e_n\}_{n=1}^\infty$ for $A^2_\om$. Since
$T$ is positive, \cite[Theorem~1.23]{Zhu} and Lemma~\ref{b0} show
that $T\in\SSS_1(A^2_\om)$ if and only if $\sum_{n=1}^\infty \langle
T(e_n), e_n \rangle_{A^2_\om}<\infty$, and further,
$tr(T)=\sum_{n=1}^\infty \langle T(e_n), e_n \rangle_{A^2_\om}$. Let
$S=\sqrt{T}$. By the reproducing formula \eqref{repfor} and
Parseval's identity, Theorem~\ref{Lemma:kernels} and
Lemma~\ref{Lemma:replacement-Lemmas-Memoirs}, we have
    \begin{equation*}
    \begin{split}
    tr(T)&=\sum_{n=1}^\infty \langle T(e_n), e_n \rangle_{A^2_\om}
    =\sum_{n=1}^\infty\|S(e_n)\|^2_{A^2_\om}\\
    &=\sum_{n=1}^\infty \int_{\D}|S(e_n)(z)|^2\om(z)\,dA(z)
    =\int_{\D}\left(\sum_{n=1}^\infty |S(e_n)(z)|^2\right)\om(z)\,dA(z)\\
    &=\int_{\D}\left(\sum_{n=1}^\infty \left|\langle S(e_n), B^\om_z\rangle_{A^2_\om}\right|^2\right)\om(z)\,dA(z)\\
    &=\int_{\D}\left(\sum_{n=1}^\infty\left|\langle e_n, S(B^\om_z)\rangle_{A^2_\om}\right|^2\right)\om(z)\,dA(z)\\
    &=\int_{\D}\|S(B^\om_z)\|^2_{A^2_\om} \,\om(z)\,dA(z)
    =\int_{\D}\langle T(B^\om_z), B^\om_z\rangle_{A^2_\om} \om(z)\,dA(z)\\
    &=\int_{\D}\wtT(z)\|B^\om_z\|^2_{A^2_\om} \om(z)\,dA(z)
    \asymp \int_\D\wtT(z)\frac{\om(z)}{\om^\star(z)}\,dA(z),
    \end{split}
    \end{equation*}
and the assertion is proved.
\end{proof}

By combining Theorem~\ref{b1} with \cite[Proposition~1.31]{Zhu} we obtain the following result.

\begin{lemma}\label{b3}
Let $\om\in\DD$ and $T:A^2_\om\to A^2_\om$ a positive operator.
\begin{itemize}
\item[\rm(i)] If $1\le p<\infty$ and $T\in\SSS_p(A^2_\om)$, then
$\wtT\in L^p_{\om/\om^\star}$ with $\|\widetilde T\|_{L^p_{\om/\om^\star}}\lesssim|T|_p^p$.
\item[\rm(ii)] If $0<p\le 1$ and
$\wtT\in L^p_{\om/\om^\star}$, then $T\in\SSS_p(A^2_\om)$ with $|T|_p^p\lesssim\|\widetilde T\|_{L^p_{\om/\om^\star}}$.
\end{itemize}
\end{lemma}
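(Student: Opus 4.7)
The plan is to deduce both parts of Lemma~\ref{b3} from Theorem~\ref{b1} applied to the positive operator $T^p$, after comparing the Berezin transforms $\widetilde{T^p}$ and $\widetilde{T}^p$ pointwise via Jensen's inequality in the spectral calculus.

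Since $T$ is positive and bounded, the spectral theorem produces a projection-valued measure $E$ on $[0,\|T\|]$ for which $T^s$ is a positive bounded operator for every $s>0$, and
$$
\widetilde{T^s}(z)=\langle T^sb^\om_z,b^\om_z\rangle_{A^2_\om}=\int_0^{\|T\|}t^s\,d\mu_z(t),\quad z\in\D,
$$
where $d\mu_z(t)=d\langle E(t)b^\om_z,b^\om_z\rangle_{A^2_\om}$ is a Borel probability measure on $[0,\|T\|]$ because $\|b^\om_z\|_{A^2_\om}=1$. Since $t\mapsto t^p$ is convex for $p\ge 1$ and concave for $0<p\le 1$, Jensen's inequality then gives
$$
\widetilde{T}(z)^p\le\widetilde{T^p}(z),\quad p\ge 1,\qquad \widetilde{T^p}(z)\le\widetilde{T}(z)^p,\quad 0<p\le 1,\qquad z\in\D;
$$
this is essentially \cite[Proposition~1.31]{Zhu}.

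For (i), given $T\in\SSS_p(A^2_\om)$ with $p\ge 1$, the operator $T^p$ is positive with $T^p\in\SSS_1(A^2_\om)$ and $tr(T^p)=|T|_p^p$. Integrating the first Jensen estimate against $\om/\om^\star$ and invoking Theorem~\ref{b1} for $T^p$ yields
$$
\|\widetilde{T}\|_{L^p_{\om/\om^\star}}^p\le\int_\D\widetilde{T^p}(z)\frac{\om(z)}{\om^\star(z)}\,dA(z)\asymp tr(T^p)=|T|_p^p.
$$
For (ii), given $0<p\le 1$ and $\widetilde{T}\in L^p_{\om/\om^\star}$, the second Jensen estimate produces $\widetilde{T^p}\le\widetilde{T}^p\in L^1_{\om/\om^\star}$. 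Theorem~\ref{b1} applied to the positive operator $T^p$ then forces $T^p\in\SSS_1(A^2_\om)$, equivalently $T\in\SSS_p(A^2_\om)$, with
$$
|T|_p^p=tr(T^p)\asymp\int_\D\widetilde{T^p}(z)\frac{\om(z)}{\om^\star(z)}\,dA(z)\le\|\widetilde{T}\|_{L^p_{\om/\om^\star}}^p.
$$

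The only substantive step is the spectral Jensen comparison between $\widetilde{T^p}$ and $\widetilde{T}^p$, and it is precisely this comparison that forces the split into the two separate ranges of $p$ appearing in the statement. Beyond it, everything reduces to a direct substitution into Theorem~\ref{b1}, so I anticipate no further technical obstacle.
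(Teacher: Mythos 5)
Your proof is correct and is essentially the paper's own argument: the paper obtains Lemma~\ref{b3} by combining Theorem~\ref{b1} (applied to $T^p$) with the spectral Jensen comparison $\widetilde{T}^p\le\widetilde{T^p}$ for $p\ge1$ and $\widetilde{T^p}\le\widetilde{T}^p$ for $0<p\le1$, which is exactly \cite[Proposition~1.31]{Zhu} that you rederive. No gaps.
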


Recall that
    $$
    \mathcal{T}_\F(f)(z)=P_\om(f\F)(z)=\int_\D f(\z)\overline{B_z^\om(\z)}\,\Phi(\z) \om(\z) dA(\z),\quad f\in
    A^2_\om,
    $$
for each non-negative function $\Phi$ on $\D$. We next establish a sufficient condition for $\mathcal{T}_\F$ to belong to $\SSS_p(A^2_\om)$ for $1\le p<\infty$.

\begin{proposition}\label{b4}
Let $1\le p<\infty$, $\om\in\DD$ and $\F\in L^p_{\om/\om^\star}$ positive. Then $\mathcal{T}_\F\in\SSS_p(A^2_\om)$ with $|\mathcal{T}_\Phi|_p\lesssim\|\Phi\|_{L^p_{\om/\om^\star}}$.
\end{proposition}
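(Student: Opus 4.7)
The natural strategy is to prove the estimate at the endpoints $p=1$ and $p=\infty$, and then obtain the range $1<p<\infty$ by complex interpolation of Schatten classes.

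For the endpoint $p=\infty$, writing $\langle \mathcal{T}_\Phi(f),g\rangle_{A^2_\om}=\int_\D f\overline{g}\,\Phi\,\om\,dA$, a direct application of the Cauchy--Schwarz inequality yields the operator norm bound $\|\mathcal{T}_\Phi\|_{A^2_\om\to A^2_\om}\le\|\Phi\|_{L^\infty}$, which is the case $p=\infty$ of the desired estimate (with $\SSS_\infty$ interpreted as the bounded operators).

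For $p=1$, the hypothesis $\Phi\ge 0$ is essential: it forces $\mathcal{T}_\Phi$ to be a positive operator, because $\langle \mathcal{T}_\Phi(f),f\rangle_{A^2_\om}=\int_\D|f|^2\Phi\,\om\,dA\ge 0$. Theorem~\ref{b1} then gives
    $$
    |\mathcal{T}_\Phi|_1=\operatorname{tr}(\mathcal{T}_\Phi)\asymp\int_\D\widetilde{\mathcal{T}}_\Phi(z)\,\frac{\om(z)}{\om^\star(z)}\,dA(z).
    $$
The Berezin transform computes as $\widetilde{\mathcal{T}}_\Phi(z)=\int_\D|b^\om_z(\z)|^2\Phi(\z)\om(\z)\,dA(\z)$. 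I would then swap the order of integration via Fubini (legal since $\Phi\ge 0$) and use $\|B^\om_z\|_{A^2_\om}^2=B^\om_z(z)\asymp1/\om^\star(z)$ (from Lemma~\ref{Lemma:replacement-Lemmas-Memoirs}(v)) to reduce the inner integral to
    $$
    \int_\D|B^\om_z(\z)|^2\,\om(z)\,dA(z)=\|B^\om_\z\|_{A^2_\om}^2\asymp\frac{1}{\om^\star(\z)}.
    $$
This collapses the double integral to $\int_\D\Phi(\z)\frac{\om(\z)}{\om^\star(\z)}\,dA(\z)=\|\Phi\|_{L^1_{\om/\om^\star}}$, finishing the $p=1$ case.

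To reach arbitrary $1<p<\infty$, I would complex interpolate. The assignment $\Phi\mapsto\mathcal{T}_\Phi$ is a linear map (extended to complex-valued $\Phi$ by linearity) that is bounded $L^\infty\to B(A^2_\om)$ and $L^1_{\om/\om^\star}\to\SSS_1(A^2_\om)$ by the two steps above. Standard complex interpolation gives $[\SSS_1(A^2_\om),B(A^2_\om)]_{1-1/p}=\SSS_p(A^2_\om)$ and $[L^1_{\om/\om^\star},L^\infty]_{1-1/p}=L^p_{\om/\om^\star}$, so $\Phi\mapsto\mathcal{T}_\Phi$ is bounded $L^p_{\om/\om^\star}\to\SSS_p(A^2_\om)$ with the claimed norm control; restricting back to positive $\Phi$ yields the proposition. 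The main obstacle, and the only place where substantial work is needed, is the $p=1$ trace computation: one must carefully combine the positivity of $\mathcal{T}_\Phi$, the trace formula from Theorem~\ref{b1}, and the kernel norm estimates of Theorem~\ref{Lemma:kernels} to extract $\om^\star$ cleanly from both factors $\|B^\om_z\|_{A^2_\om}^2$ and $\|B^\om_\z\|_{A^2_\om}^2$; once this is done, the rest is bookkeeping plus a black-box invocation of Schatten interpolation.
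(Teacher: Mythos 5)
Your argument is correct, but it takes a genuinely different route from the paper's. The paper follows Zhu's Proposition~7.11 directly: for compactly supported $\Phi$ the operator $\mathcal{T}_\Phi$ is positive and compact, so it admits a canonical decomposition with eigenvalues $\lambda_n=\int_\D|e_n|^2\Phi\om\,dA$; since $\|e_n\|_{A^2_\om}=1$, Jensen's inequality gives $\lambda_n^p\le\int_\D|e_n|^2\Phi^p\om\,dA$, and summing with $\sum_n|e_n(z)|^2\le B^\om_z(z)\asymp1/\om^\star(z)$ yields $\sum_n\lambda_n^p\lesssim\|\Phi\|^p_{L^p_{\om/\om^\star}}$ for every $p\ge1$ in one stroke; the general case is then obtained by truncating to $\Phi_r=\Phi\chi_{D(0,r)}$ and showing that $\{\mathcal{T}_{\Phi_r}\}$ is Cauchy in $\SSS_p(A^2_\om)$. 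You instead prove only the two endpoint bounds $\|\mathcal{T}_\Phi\|\le\|\Phi\|_{L^\infty}$ and $|\mathcal{T}_\Phi|_1\asymp\|\Phi\|_{L^1_{\om/\om^\star}}$ (the latter via Theorem~\ref{b1}, Fubini's theorem and the kernel symmetry $|B^\om_z(\z)|=|B^\om_\z(z)|$ --- this computation is correct) and then invoke complex interpolation of Schatten ideals. The paper's proof buys self-containedness: it never needs interpolation, only the spectral decomposition of a positive compact operator, and it treats all $p\ge1$ uniformly. Yours buys economy: all the analysis is concentrated at $p=1$, where Theorem~\ref{b1} has already done most of the work. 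Two details you should make explicit rather than leave implicit: (a) Theorem~\ref{b1} presupposes that $\mathcal{T}_\Phi$ is a bounded positive operator on $A^2_\om$, which for $\Phi\in L^1_{\om/\om^\star}$ must be checked first --- it follows from $\int_{S(a)}\Phi\om\,dA\le\bigl(\sup_{S(a)}\om^\star\bigr)\|\Phi\|_{L^1_{\om/\om^\star}}\lesssim\om(S(a))\|\Phi\|_{L^1_{\om/\om^\star}}$ together with the $L^2$ Carleson embedding, or alternatively by the same truncation argument the paper uses; and (b) the interpolation step requires extending $\Phi\mapsto\mathcal{T}_\Phi$ linearly to complex-valued symbols on the compatible couple $\bigl(L^1_{\om/\om^\star},L^\infty\bigr)$ and checking that the two restrictions agree, which is routine but worth a sentence. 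Neither point is a gap in substance.
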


\begin{proof}
We will follow the proof of \cite[Proposition~7.11]{Zhu}. Assume first that $\F$ has compact support in $\D$. Then $\mathcal{T}_\F$ is a positive compact operator with canonical decomposition
    $$
    \mathcal{T}_\F(f)=\sum_{n=1}^\infty \lambda_n\langle f,e_n\rangle_{A^2_\om} e_n,
    $$
where $\{\la_n\}$ is the sequence of eigenvalues of $\mathcal{T}_\F$, and
$\{e_n\}$ is an orthonormal set of $A^2_\om$. Therefore
    $$
    \la_n=\langle \mathcal{T}_\F(e_n),e_n\rangle_{A^2_\om}=\int_{\D} |e_n(z)|^2\F(z)\om(z)\,dA(z),\quad n\in\N,
    $$
by \eqref{tmu}. Since $p\ge1$, the H\"older's inequality yields
    $$
    \la_n^p\le \int_{\D} |e_n(z)|^2\F(z)^p\om(z)\,dA(z),
    $$
and hence
    \begin{equation}\label{sp1}
    \begin{split}
    \sum_{n=1}^\infty \lambda_n^p &\le \int_{\D} \sum_{n=1}^\infty|e_n(z)|^2\F(z)^p\om(z),dA(z)\\
    &\le \int_{\D} B_z^\om(z) \F(z)^p\om(z)\,dA(z)
    \asymp \int_{\D}\F(z)^p\frac{\om(z)}{\omega^\star(z)}\,dA(z)
    \end{split}
    \end{equation}
by Theorem~\ref{Lemma:kernels}. Thus $\mathcal{T}_\Phi\in\mathcal{S}_p(A^2_\om)$.

To prove the general case, assume $\F\in L^p_{\om/\om^\star}$. Then H\"older's inequality and Lemma~\ref{Lemma:replacement-Lemmas-Memoirs} yield
    \begin{equation*}
    \begin{split}
    \lim_{|a|\to 1^-}\frac{\int_{S(a)}\F(z)\om(z)\,dA(z)}{\om(S(a))}
    &\le\lim_{|a|\to1^-}\left(\frac{\int_{S(a)}\F(z)^p\om(z)\,dA(z)}{\om(S(a))}\right)^{\frac{1}{p}}\\
    &\lesssim \lim_{|a|\to 1^-}\left(\int_{S(a)}\F(z)^p\frac{\om(z)}{\om^\star(z)}\,dA(z)\right)^{\frac{1}{p}}=0,
    \end{split}
    \end{equation*}
and hence $\mathcal{T}_\Phi:A^2_\om\to A^2_\om$ is compact by Theorem~\ref{th:tucompactpq}.

Now write $\F_r=\F\chi_{D(0,r)}$, where $\chi_{D(0,r)}$ is the
characteristic function of $D(0,r)$. Arguing as in \eqref{sp1} it
follows that $\{\mathcal{T}_{\F_{r}}\}_{r\in (0,1) }$ is Cauchy in the Banach
space $\left(\SSS_p(A^2_\om),|\cdot|_p\right)$. Hence there exists
$T\in\SSS_p(A^2_\om)$ such that $\lim_{r\to 1^-}|\mathcal{T}_{\F_{r}}-T|_p=0$.
On the other hand, if $f$ is a polynomial and $z\in\D$, then
Lemma~\ref{fubini} and H\"older's inequality yield
    \begin{equation*}
    \begin{split}
    |(\mathcal{T}_{\F_{r}}-\mathcal{T}_{\F})(f)(z)|
    &=|\langle(\mathcal{T}_{\F_{r}}-\mathcal{T}_{\F})(f),B^\om_z\rangle_{A^2_\om}|\\
    &=\left|\int_{r<|\z|<1}f(\z)B^\om_\z(z)\F(\z)\om(\z)\,dA(\z)\right|\\
    &\le C\|f\|_{H^\infty}\int_{r<|\z|<1}\F(\z)\om(\z)\,dA(\z)\\
    &\le C\|f\|_{H^\infty}\left(\int_{r<|\z|<1}\F(\z)^p\frac{\om(\z)}{\om^\star(\z)}\,dA(\z)\right)^\frac1p\\
    &\quad\cdot\left(\int_{\D}\om^\star(\z)^{p'-1}\om(\z)\,dA(\z)\right)^\frac1{p'}\to 0,\quad r\to 1^-,
    \end{split}
    \end{equation*}
where $C=C(z)$ is a constant. Thus $\mathcal{T}_{\F_{r}}(f)\to
\mathcal{T}_{\F}(f)$ pointwise for any polynomial $f$. Since
$\mathcal{T}_{\F_{r}}$ and $\mathcal{T}_{\F}$ are bounded on
$A^2_\om$, and polynomials are dense in $A^2_\om$, we deduce that
$\mathcal{T}_{\F_{r}}(f)\to \mathcal{T}_\Phi(f)$ pointwise for all
$f\in A^2_\om$. Therefore $\mathcal{T}_\F=T\in\SSS_p(A^2_\om)$.
\end{proof}

We will need one more auxiliary result in the proof of Theorem~\ref{MTMSchatten}.

\begin{proposition}\label{b5}
Let $\om\in\R$, $0<r<1$ and $\mu$ be a finite positive Borel measure
on $\D$ such that $\mathcal{T}_{\widehat{\mu}_r}:A^2_\om\to A^2_\om$ is
bounded. Then $\mathcal{T}_\mu:A^2_\om\to A^2_\om$ is bounded, and
there exists $C=C(\om,r)>0$ such that $\langle
\mathcal{T}_{\mu}(f),f\rangle_{A^2_\om}\le C\langle
\mathcal{T}_{\widehat{\mu}_r}(f),f\rangle_{A^2_\om}$ for all $f\in
A^2_\om$.
\end{proposition}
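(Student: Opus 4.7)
The plan is to reduce the operator-theoretic statement to a sesquilinear-form inequality
\[
\langle\mathcal{T}_\mu(f),f\rangle_{A^2_\om}\le C\langle\mathcal{T}_{\widehat{\mu}_r}(f),f\rangle_{A^2_\om},\qquad f\in A^2_\om,
\]
and then deduce boundedness of $\mathcal{T}_\mu$ from that of $\mathcal{T}_{\widehat{\mu}_r}$ by using that both operators are positive and symmetric. Thanks to the identity \eqref{tmu}, for polynomials $f$ (for which all integrals converge since $\mu$ is finite and the kernels are bounded on fixed compacta) the two sides above are
\[
\int_\D|f(z)|^2\,d\mu(z)\qquad\text{and}\qquad\int_\D|f(\z)|^2\widehat{\mu}_r(\z)\om(\z)\,dA(\z),
\]
respectively, so the task is to compare these two integrals.

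The core estimate is a standard subharmonic averaging argument adapted to the weight $\omega\in\R$. Since $|f|^2$ is subharmonic and $\omega$ is essentially constant on each pseudohyperbolic disc $\Delta(z,r)$ by \eqref{eq:r2}, one has
\[
|f(z)|^2\lesssim\frac{1}{\om(\Delta(z,r))}\int_{\Delta(z,r)}|f(\z)|^2\om(\z)\,dA(\z),\qquad z\in\D,
\]
with a constant depending on $r$ and $\om$ (using also \eqref{eq:discocaja} to convert the Euclidean average into a weighted one). Plugging this into $\int_\D|f|^2\,d\mu$ and applying Fubini's theorem with the symmetry of the pseudohyperbolic metric ($\z\in\Delta(z,r)\Leftrightarrow z\in\Delta(\z,r)$) gives
\[
\int_\D|f(z)|^2\,d\mu(z)\lesssim\int_\D|f(\z)|^2\om(\z)\left(\int_{\Delta(\z,r)}\frac{d\mu(z)}{\om(\Delta(z,r))}\right)dA(\z).
\]
For $z\in\Delta(\z,r)$ one has $(1-|z|)\asymp(1-|\z|)$, so by \eqref{eq:r2} and \eqref{eq:discocaja}, $\om(\Delta(z,r))\asymp\om(\Delta(\z,r))$; consequently the inner integral is $\asymp\widehat{\mu}_r(\z)$, and the desired form inequality follows on polynomials.

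To pass from the form bound on polynomials to boundedness of $\mathcal{T}_\mu$ on $A^2_\om$, I would observe that $\mathcal{T}_\mu$ is positive and symmetric on polynomials via
$\langle\mathcal{T}_\mu(p),q\rangle_{A^2_\om}=\int_\D p\overline{q}\,d\mu$,
so by the Cauchy--Schwarz inequality for positive sesquilinear forms,
\[
|\langle\mathcal{T}_\mu(p),q\rangle_{A^2_\om}|^2\le\langle\mathcal{T}_\mu(p),p\rangle_{A^2_\om}\langle\mathcal{T}_\mu(q),q\rangle_{A^2_\om}\le C^2\|\mathcal{T}_{\widehat{\mu}_r}\|^2\|p\|^2_{A^2_\om}\|q\|^2_{A^2_\om}.
\]
Combined with the self-duality $(A^2_\om)^\star\simeq A^2_\om$ from Theorem~\ref{th:duality} and density of polynomials in $A^2_\om$, this yields that $\mathcal{T}_\mu$ extends to a bounded operator on $A^2_\om$ with $\|\mathcal{T}_\mu\|\lesssim\|\mathcal{T}_{\widehat{\mu}_r}\|$, and the form inequality then propagates to all $f\in A^2_\om$ by continuity. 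The main obstacle is a bookkeeping one: making sure that the constants arising from $\om\in\R$ (the equivalences on $\Delta(z,r)$) and from the symmetric-form extension produce a constant depending only on $\om$ and $r$, as claimed; once this is in place the proof is essentially immediate.
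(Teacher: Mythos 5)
Your proof is correct, and its core estimate is exactly the paper's: the sub-mean-value inequality for $|f|^2$ over $\Delta(z,r)$ combined with \eqref{eq:r2}--\eqref{eq:discocaja} and Fubini's theorem, yielding $\int_\D|f|^2\,d\mu\lesssim\int_\D|f|^2\widehat{\mu}_r\om\,dA$ on polynomials and then on all of $A^2_\om$ by density. Where you genuinely diverge is in how boundedness of $\mathcal{T}_\mu$ is obtained. The paper gets it ``for free'' from the Carleson-measure machinery: it invokes Theorem~\ref{th:tubounpq} together with the embedding theorems of \cite{PelRatEmb,OC}, i.e.\ it passes through the characterization $\mu(S(\cdot))/\om(S(\cdot))\in L^\infty$. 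You instead extract boundedness directly from the form inequality via the Cauchy--Schwarz inequality for the positive sesquilinear form $(p,q)\mapsto\int_\D p\overline{q}\,d\mu$, which gives $|\langle\mathcal{T}_\mu(p),q\rangle_{A^2_\om}|\le C\|\mathcal{T}_{\widehat{\mu}_r}\|\,\|p\|_{A^2_\om}\|q\|_{A^2_\om}$ and hence a bounded extension by Hilbert-space self-duality (Theorem~\ref{th:duality} is not even needed here, Riesz representation suffices). This is more self-contained, avoids the external Carleson-measure citations, and yields the quantitative bound $\|\mathcal{T}_\mu\|\lesssim\|\mathcal{T}_{\widehat{\mu}_r}\|$ explicitly; the paper's route, by contrast, records the equivalence with the geometric condition on Carleson squares, which is reused elsewhere. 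One small point worth making explicit in your write-up: to know that the continuous extension produced by the form argument coincides with the integral operator $\mathcal{T}_\mu$ on all of $A^2_\om$ (and that $\langle\mathcal{T}_{\widehat{\mu}_r}f,f\rangle_{A^2_\om}=\int_\D|f|^2\widehat{\mu}_r\om\,dA$ for polynomials), you should appeal to Lemma~\ref{fubini}, noting also that $\widehat{\mu}_r\om\,dA$ is a finite measure when $\mu$ is finite; this is the same tacit step the paper takes.
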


\begin{proof}
Note first that $\mathcal{T}_\mu:A^2_\om\to A^2_\om$
is bounded by Theorem~\ref{th:tubounpq} and
\cite[Theorem~1]{PelRatEmb}, see also \cite[Theorem~3.1 and
Theorem~4.1]{OC}. Let $f$ be a polynomial. Then
    \begin{equation*}
    \begin{split}
    |f(\zeta)|^2
    &\lesssim\frac{1}{(1-|\zeta|)^2}\int_{\Delta(\zeta,r)}|f(z)|^2\,dA(z)\asymp\int_{\Delta(\zeta,r)}\frac{|f(z)|^2}{(1-|z|)^2}\,dA(z),\quad\zeta
    \in\D,
    \end{split}
    \end{equation*}
and hence Fubini's theorem, Lemma~\ref{fubini}, Lemma~\ref{Lemma:replacement-Lemmas-Memoirs} and
\eqref{eq:discocaja} yield
    \begin{equation*}
    \begin{split}
    \langle \mathcal{T}_{\mu}(f),f\rangle_{A^2_\om}
    &=\int_\D |f(\zeta)|^2\,d\mu(\zeta)
    \lesssim \int_\D\left(\int_{\Delta(\zeta,r)}\frac{|f(z)|^2}{(1-|z|)^2}\,dA(z)\right)\,d\mu(\zeta)\\
    &=\int_\D\frac{|f(z)|^2}{(1-|z|)^2\om(z)}\left(\int_{\Delta(z,r)}d\mu(\zeta) \right)\om(z)\,dA(z)\\
    &\asymp \int_\D \frac{|f(z)|^2}{\om(\Delta(z,r))}\left(\int_{\Delta(z,r)}d\mu(\zeta) \right)\om(z)\,dA(z)
    =\langle
    \mathcal{T}_{\widehat{\mu}_r}(f),f\rangle_{A^2_\om}.
    \end{split}
    \end{equation*}
Since $\mathcal{T}_{\widehat{\mu}_r}:A^2_\om\to A^2_\om$ and
$\mathcal{T}_\mu:A^2_\om\to A^2_\om$ are bounded, and polynomials are dense in $A^2_\om$, it follows that
    $$
    \langle\mathcal{T}_{\mu}(f),f\rangle_{A^2_\om}
    \lesssim\langle\mathcal{T}_{\widehat{\mu}_r}(f),f\rangle_{A^2_\om},\quad f\in A^2_\om,
    $$
and the proof is complete.
\end{proof}

\medskip

\begin{Prf}{\em{Theorem~\ref{MTMSchatten}}.}
The conditions (i)--(iii) are equivalent by \cite[Theorem~1]{PR2016/2}, so it suffices to prove the last claim which concerns the Berezin transform.

The assertion is valid for $p=1$ and $\om\in\DD$ by Theorem~\ref{b1}. For $1<p<\infty$,
Lemma~\ref{b3} shows that $\mathcal{T}_\mu\in\SSS_p(A^2_\om)$
implies $\widetilde{\mathcal{T}}_\mu\in L^p_{\om/\om^\star}$ with $\|\widetilde{\mathcal T}_\mu\|_{L^p_{\om/\om^\star}}\lesssim|\mathcal{T}_\mu|_p$. To see the
converse implication, let $r\in(0,r(\om))$, where $r(\om)$ is that of Lemma~\ref{b6}. If $\widetilde{\mathcal{T}}_\mu\in L^p_{\om/\om^\star}$, then $\widehat{\mu}_r\in L^p_{\om/\om^\star}$ with $\|\widehat{\mu}_r\|_{L^p_{\om/\om^\star}}\lesssim\|\widetilde{\mathcal{T}}_\mu\|_{L^p_{\om/\om^\star}}$ by \eqref{eq:viddu}. Therefore
$\mathcal{T}_{\widehat{\mu}_r}\in\SSS_p(A^2_\om)$ by
Proposition~\ref{b4}, which in turn implies
$\mathcal{T}_{\mu}\in\SSS_p(A^2_\om)$ with $|\mathcal{T}_\mu|_p\lesssim\|\widetilde{\mathcal{T}}_\mu\|_{L^p_{\om/\om^\star}}$ by Proposition~\ref{b5} and
\cite[Theorem~1.27]{Zhu}.

Let now $0<p<1$. If $\widetilde{\mathcal{T}}_\mu\in L^p_{\om/\om^\star}$,
then $\mathcal{T}_\mu\in\SSS_p(A^2_\om)$ with $|\mathcal{T}_\mu|_p\lesssim\|\widetilde{\mathcal{T}}_\mu\|_{L^p_{\om/\om^\star}}$ by Lemma~\ref{b3}. Conversely, assume that $\mathcal{T}_\mu\in\SSS_p(A^2_\om)$. Then \eqref{Eq:BerezinTransformation} yields
    \begin{equation*}
    \begin{split}
    (\widetilde{\mathcal{T}}_\mu(z))^p
    &\asymp(\om^\star(z))^p\|B^\om_z\|^{2p}_{L^2_\mu}
    =(\om^\star(z))^p\left(\sum_{R_j\in\Upsilon}\int_{R_j}|B_z^\om(\z)|^2\,d\mu(\zeta)\right)^p\\
    &\le(\om^\star(z))^p\sum_{R_j\in\Upsilon}\left(\frac{\mu(R_j)}{\om^\star(z_j)}\right)^p
    |B_z^\om(\widetilde{z}_{j,z})|^{2p}(\om^\star(z_j))^p,\quad|z|\ge\frac12,
    \end{split}
    \end{equation*}
where $\widetilde{z}_{j,z}\in \overline{R}_j$ such that $\sup_{\z\in
R_j}|B_z^\om(\z)|=|B_z^\om(\widetilde{z}_{j,z})|$. Consequently,
    \begin{equation}\label{eq:chumino}
    \begin{split}
    \|\widetilde{\mathcal{T}}_\mu\|_{L^p_{\om/\om^\star}}^p
    &\le\sum_{R_j\in\Upsilon}\left(\frac{\mu(R_j)}{\om^\star(z_j)}\right)^p(\om^\star(z_j))^p
    \int_\D|B_{z}^\om(\widetilde{z}_{j,z})|^{2p}(\om^\star(z))^{p-1}\om(z)\,dA(z)\\
    &\asymp\sum_{R_j\in\Upsilon}\left(\frac{\mu(R_j)}{\om^\star(z_j)}\right)^p(\om^\star(z_j))^p
    \int_\D|B_{z}^\om(\widetilde{z}_{j,z})|^{2p}\frac{\om^\star(z))^{p}}{(1-|z|)^2}\,dA(z)
    \end{split}
    \end{equation}
because $\om\in\R$.
Now, fix $0<r<1$ and $\delta=\delta(r)\in(0,1)$ such that $\Delta(z,r)\subset \Delta(z_j,\delta)$ for all $z\in R_j$.
Then, by the subharmonicity of $|B_z^\om|^{2p}$ and Fubini's theorem,
    \begin{equation*}
    \begin{split}
    &\int_\D|B_{z}^\om(\widetilde{z}_{j,z})|^{2p}\frac{\om^\star(z)^{p}}{(1-|z|)^2}\,dA(z)\\
    &\lesssim \int_\D\frac{1}{(1-|\widetilde{z}_{j,z}|)^2}
    \left(\int_{\Delta(\widetilde{z}_{j,z},r)}|B_{z}^\om(\z)|^{2p}\,dA(\z)\right)\frac{\om^\star(z)^{p}}{(1-|z|)^2}\,dA(z)\\
    &\lesssim \frac{1}{(1-|z_{j}|)^2}\int_\D\left(\int_{\Delta(z_j,\delta)}|B_{\z}^\om(z)|^{2p}\,dA(\z)\right)\frac{\om^\star(z)^{p}}{(1-|z|)^2}\,dA(z)\\
    &=\frac{1}{(1-|z_{j}|)^2}\int_{\Delta(z_j,\delta)}\left(\int_\D
    |B_{\z}^\om(z)|^{2p}\frac{\om^\star(z)^{p}}{(1-|z|)^2}\,dA(z)\right)\,dA(\z).
    \end{split}
    \end{equation*}
An application of Theorem~\ref{Lemma:kernels} together with Lemma~\ref{Lemma:replacement-Lemmas-Memoirs} and the hypothesis that $\om^\star(\cdot)/(1-|\cdot|)^2$ is a regular weight show that the inner integral above is dominated by a constant times
    $$
    \int_0^{|\z|}\frac{\int_t^1\frac{\om^\star(s)^{p}}{(1-s)^2}\,ds}{\widehat{\om}(t)^{2p}(1-t)^{2p}}\,dt
    %\asymp\int_0^{|\z|}\frac{\int_t^1\frac{\widehat{\om}(s)^{p}(1-s)^{p}}{(1-s)^2}\,ds}{\widehat{\om}(t)^{2p}(1-t)^{2p}}\,dt
    \asymp\int_0^{|\z|}\frac{1}{\widehat{\om}(t)^{p}(1-t)^{p+1}}\,dt
    \asymp\frac{1}{\widehat{\om}(\z)^{p}(1-|\z|)^{p}},
    $$
and hence
    \begin{equation*}
    \begin{split}
    \int_\D|B_{z}^\om(\widetilde{z}_{j,z})|^{2p}\frac{\om^\star(z))^{p}}{(1-|z|)^2}\,dA(z)
    \lesssim\frac{1}{\widehat{\om}(z_j)^{p}(1-|z_j|)^{p}}\asymp\frac1{\om^\star(z_j)^p},\quad |z_j|\to1^-,
    \end{split}
    \end{equation*}
by Lemma~\ref{Lemma:replacement-Lemmas-Memoirs}. This combined with \eqref{eq:chumino} and the equivalence
(i)$\Leftrightarrow$(iii), proved in \cite[Theorem~1]{PR2016/2}, gives the assertion.
\end{Prf}

\medskip

In view of Theorems~\ref{MTMSchatten} and~\ref{b1} it is
natural to ask whether or not the condition $\widetilde{\mathcal
T}_\mu\in L^p_{\om/\om^\star}$ characterizes the Schatten class Toeplitz operators for the whole class $\DD$ of doubling weights. The next result answers this question in negative.

\begin{proposition}\label{pr:noberI}
For each $1<p<\infty$ there exist $\om\in\DD$ and a positive Borel
measure $\mu$ on $\D$ such that $\widetilde{\mathcal T}_\mu\in
L^p_{\om/\om^\star}$ but $\mathcal
T_\mu\notin\mathcal{S}_p(A^2_\om)$.
\end{proposition}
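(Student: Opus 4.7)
The goal is to exhibit a pair $(\om,\mu)$ with $\om\in\DD$ (necessarily $\om\in\DD\setminus\R$, since Theorem~\ref{MTMSchatten} already settles the regular case) and a positive Borel measure $\mu$ on $\D$ such that $\widetilde{\mathcal T}_\mu\in L^p_{\om/\om^\star}$ yet $\mathcal T_\mu\notin\SSS_p(A^2_\om)$. By the equivalence (i)$\Leftrightarrow$(ii) of Theorem~\ref{MTMSchatten}, valid for every $\om\in\DD$, the Schatten condition reduces to a sum, so it is enough to arrange
\begin{equation*}
\sum_{R_j\in\Upsilon}\left(\frac{\mu(R_j)}{\om^\star(z_j)}\right)^p=\infty
\quad\text{and}\quad
\int_\D \widetilde{\mathcal T}_\mu(z)^p\,\frac{\om(z)}{\om^\star(z)}\,dA(z)<\infty.
\end{equation*}

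The driving idea is that for $\om\in\R$ the local regularity \eqref{eq:r2} together with Lemma~\ref{b6} forces $\widetilde{\mathcal T}_\mu(z)\gtrsim\widehat\mu_r(z)$ uniformly on $\D$, a rigidity which is absent for $\om\in\DD\setminus\R$ with sparse support: such an $\om$ may vanish on large open subsets of $\D$, and since the integrating density $\om/\om^\star$ in the Berezin integral is supported on $\{\om>0\}$, the Berezin transform only needs to be controlled on the support of $\om$. This breaks the symmetry between the two conditions above and suggests placing $\mu$ away from $\supp\om$.

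Concretely, I would start from a lacunary sequence $r_k=1-2^{-k}$ and define $\om$ as a sum of scaled characteristic functions of thin annuli centered at the even-indexed radii $r_{2k}$, with annular masses $b_k$ and widths $\ell_k$ chosen so that $\widehat\om$ satisfies the doubling estimate of Lemma~\ref{Lemma:replacement-Lemmas-Memoirs}(ii) (forcing $\om\in\DD$) while $\om$ itself is not comparable to $\widehat\om(\cdot)/(1-\cdot)$, excluding $\om$ from $\R$. The measure would be a discrete sum $\mu=\sum_k c_k\delta_{w_k}$ with $w_k$ located on the gap circles $|w_k|=r_{2k+1}$, with atoms $c_k$ calibrated so that $\sum_k (c_k/\om^\star(w_k))^p=\infty$, immediately giving $\mathcal T_\mu\notin\SSS_p(A^2_\om)$ by the equivalence quoted above.

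The main obstacle is to show that, with a careful tuning of $b_k$, $\ell_k$, and $c_k$, the Berezin transform
\begin{equation*}
\widetilde{\mathcal T}_\mu(z)\asymp\om^\star(z)\sum_k c_k\,|B^\om_z(w_k)|^2
\end{equation*}
coming from \eqref{Eq:BerezinTransformation} belongs to $L^p_{\om/\om^\star}$. The naive Cauchy--Schwarz bound $|B^\om_z(w_k)|^2\le B^\om_z(z)\,B^\om_{w_k}(w_k)\asymp 1/(\om^\star(z)\om^\star(w_k))$ is too weak, as it yields $\widetilde{\mathcal T}_\mu(z)\lesssim \sum_k c_k/\om^\star(w_k)$, a quantity that must be infinite for $p>1$ once the Schatten sum diverges. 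A sharper off-diagonal decay of $B^\om_z(w_k)$ has to be extracted by splitting the power series $B^\om_z(w_k)=\sum_n(\bar z w_k)^n/(2\om_{2n+1})$ into dyadic blocks and exploiting the moment asymptotic $\om_{2n+1}\asymp\widehat\om(1-1/n)$ of Lemma~\ref{Lemma:replacement-Lemmas-Memoirs}(iv), together with the oscillation of $\widehat\om$ between gap and annular scales. Once such a refined bound for $|B^\om_z(w_k)|$ is available, an annulus-by-annulus estimate and summation of the resulting geometric series should deliver the required finiteness, completing the construction.
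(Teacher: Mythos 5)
There is a genuine gap: the step you yourself identify as ``the main obstacle'' --- establishing enough off-diagonal decay of $B^\om_z(w_k)$ for your lacunary weight to make $\widetilde{\mathcal T}_\mu\in L^p_{\om/\om^\star}$ --- is never carried out, and it is precisely the nontrivial content of the proposition. You correctly observe that the naive Cauchy--Schwarz bound is useless, but the proposed remedy (dyadic block splitting of the power series plus the moment asymptotics of Lemma~\ref{Lemma:replacement-Lemmas-Memoirs}(iv)) is only sketched as something that ``should deliver the required finiteness.'' For a weight supported on thin annuli with atoms placed in the gaps, controlling $\sum_k c_k|B^\om_z(w_k)|^2$ for $z$ on the nearby annuli (where $1-|z||w_k|\asymp 1-|w_k|$, so no off-diagonal smallness is available in the radial variable) is delicate, and nothing in the paper's toolbox hands it to you. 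Note also that the pointwise lower bound $\widetilde{\mathcal T}_\mu(z)\gtrsim\mu(\Delta(z,r))/\om^\star(z)$ from \eqref{eq:viddu} and Lemma~\ref{b6} holds for \emph{every} $\om\in\DD$, not just regular ones; so the counterexample cannot come from making the Berezin transform small where the mass sits --- it can only come from the discrepancy between the two integrating measures $\frac{\om}{\om^\star}\,dA$ and $\frac{dA}{(1-|\cdot|)^2}$ appearing in conditions of Theorem~\ref{MTMSchatten}. Your sparse-support construction is one (hard) way to create such a discrepancy, but you have not closed the argument.

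The paper's proof avoids all of this by an explicit radial computation. It takes $\om(z)=\bigl[(1-|z|)\bigl(\log\frac{e}{1-|z|}\bigr)^{\a}\bigr]^{-1}$ with $\a>-1$, a strictly positive weight in $\DD\setminus\R$ for which $\om(z)(1-|z|)^2/\om^\star(z)\asymp\bigl(\log\frac{e}{1-|z|}\bigr)^{-1}$, and the absolutely continuous measure $d\mu=v\,dA$ with $v(z)=(1-|z|)^{-1+\frac1p}\bigl(\log\frac{e}{1-|z|}\bigr)^{-\a+1-\b}$, $0<\b<\frac1p$. Theorem~\ref{Lemma:kernels} then gives $\widetilde{\mathcal T}_\mu(z)\asymp\mu(\Delta(z,r))/\om^\star(z)\asymp(1-|z|)^{1/p}\bigl(\log\frac{e}{1-|z|}\bigr)^{-\b}$, and the single extra logarithm in $\om/\om^\star$ versus $(1-|\cdot|)^{-2}$ makes the Berezin integral converge (exponent $\b p+1>1$) while the Schatten criterion~(iii) diverges (exponent $\b p<1$). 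If you want to salvage your approach, you would need to either supply the kernel decay estimate in full or switch to a radial, explicitly computable example of this kind.
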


\begin{proof}
Let $\om(z)=\left[(1-|z|)\left(\log\frac{e}{1-|z|}
\right)^\a\right]^{-1}$, where $\a>-1$, and let $d\mu(z)=v(z)\,dA(z)$, where
$v(z)=(1-|z|)^{-1+\frac{1}{p}}\left(\log\frac{e}{1-|z|}
\right)^{-\a+1-\b}$ and $0<\b<\frac{1}{p}$. Then
\eqref{Eq:BerezinTransformation}, Theorem~\ref{Lemma:kernels} and
Lemma~\ref{Lemma:replacement-Lemmas-Memoirs} yield
    $$
    \widetilde{\mathcal T}_\mu(z)\asymp \om^\star(z)\int_{\D}|B^\om_z(\z)|^2\,v(\z)\,dA(\z)
    \asymp\frac{\widehat{v}(z)}{\widehat{\om}(z)}\asymp(1-|z|)^{\frac{1}{p}}\left(\log\frac{e}{1-|z|}
    \right)^{-\b},\quad |z|\ge\frac12.
    $$
Therefore
    \begin{equation*}
    \begin{split}
    &\int_{\D\setminus D(0,\frac12)}\left(\widetilde{\mathcal T}_\mu(z)\right)^p\,\frac{\om(z)}{\om^\star(z)}\,dA(z)\\ &\asymp\int_{{\D\setminus D(0,\frac12)}}\left((1-|z|)^{\frac{1}{p}}\left(\log\frac{e}{1-|z|} \right)^{-\b}\right)^p\frac{dA(z)}{(1-|z|)^2\log\frac{e}{1-|z|}}\\ &=\int_{{\D\setminus D(0,\frac12)}}\frac{dA(z)}{(1-|z|)\left(\log\frac{e}{1-|z|} \right)^{\b p+1}}<\infty,
    \end{split}
    \end{equation*}
and thus $\widetilde{\mathcal T}_\mu\in L^p_{\om/\om^\star}$. However, for each $r\in(0,1)$,
    $$
    \ddot{\mu}_r(z)=\frac{\mu(\Delta(z,r))}{\om^\star(z)}\asymp \frac{(1-|z|)^2v(z)}{\om^\star(z)}
    \asymp (1-|z|)^{\frac{1}{p}}\left(\log\frac{e}{1-|z|}
    \right)^{-\b},\quad|z|\ge\frac12,
    $$
and hence
    \begin{equation*}
    \begin{split}
    \|\ddot{\mu}_r\|^p_{L^p\left(\frac{dA(z)}{(1-|z|)^2}\right)}
    &\gtrsim\int_{\D\setminus D(0,\frac12)}\left((1-|z|)^{\frac{1}{p}}\left(\log\frac{e}{1-|z|} \right)^{-\b}\right)^p\,\frac{dA(z)}{(1-|z|)^2}\\
    &=\int_{\D\setminus D(0,\frac12)}\frac{dA(z)}{(1-|z|)\left(\log\frac{e}{1-|z|} \right)^{p\b}}=\infty.
    \end{split}
    \end{equation*}
Consequently, $\mathcal T_\mu\notin\mathcal{S}_p(A^2_\om)$ by
Theorem~\ref{MTMSchatten}.
\end{proof}

The asymptotic relation $\om(z)/\om^\star(z)\asymp(1-|z|)^{-2}$, valid for each $\om\in\R$ and $z\in\D$ uniformly bounded away from the origin, has been repeatedly used in this paper. This relation fails for $\om\in\DD\setminus\R$ and, for example, the doubling weight $\om(z)=\left[(1-|z|)\left(\log\frac{e}{1-|z|}
\right)^\a\right]^{-1}$, where $\a>-1$, satisfies $\om(z)(1-|z|)^2/\om^\star(z)\asymp\left(\log\frac{e}{1-|z|}\right)^{-1}\to0$, as $|z|\to1^-$.
The last result of this section shows that this innocent looking difference is significant concerning the conditions
 $\widetilde{\mathcal{T}}_\mu\in L^1\left(\frac{dA}{(1-|\cdot|)^2}\right)$ and $\widetilde{\mathcal{T}}_\mu\in L^1_{\om/\om^\star}$.
 Therefore one may not replace $L^1_{\om/\om^\star}$ by $L^1\left(\frac{dA}{(1-|\cdot|)^2}\right)$ in the statement of Theorem~\ref{b1}.

\begin{proposition}
There exists $\om\in\DD$ and a positive Borel measure $\mu$ on $\D$
such that $\mathcal T_\mu\in\mathcal{S}_1(A^2_\om)$ and
$\widetilde{\mathcal{T}}_\mu\notin L^1\left(\frac{dA}{(1-|\cdot|)^2}\right)$.
\end{proposition}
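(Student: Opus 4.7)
The plan is to exploit the fact, emphasized in the discussion preceding the statement, that for $\om\in\DD\setminus\R$ the weights $\om/\om^\star$ and $(1-|\cdot|)^{-2}$ can differ by a genuine logarithmic factor. I will produce a concrete radial weight $\om$ of this form together with a positive radial measure $\mu$, and use the identity
    $$
    \widetilde{\mathcal T}_\mu(z)\asymp\om^\star(z)\|B^\om_z\|^2_{L^2_\mu}
    $$
from \eqref{Eq:BerezinTransformation} together with the kernel norm estimate \eqref{kernel-estimate} of Theorem~\ref{Lemma:kernels} applied to $\nu$ equal to the density of $\mu$, in order to compute the Berezin transform explicitly and verify the two integrability conditions have opposite behavior.

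Concretely, I would take
    $$
    \om(z)=\bigl[(1-|z|)(\log\tfrac{e}{1-|z|})^\a\bigr]^{-1},\qquad \a>1,
    $$
which clearly lies in $\DD$. A direct integration gives $\widehat{\om}(r)\asymp(\log\tfrac{e}{1-r})^{1-\a}$ and thus, by Lemma~\ref{Lemma:replacement-Lemmas-Memoirs}(v), $\om^\star(z)\asymp(1-|z|)(\log\tfrac{e}{1-|z|})^{1-\a}$ and
    $$
    \frac{\om(z)}{\om^\star(z)}\asymp\frac{1}{(1-|z|)^2\log\tfrac{e}{1-|z|}}.
    $$
So the Lebesgue density $(1-|z|)^{-2}$ is strictly larger than $\om/\om^\star$ by a logarithmic factor, as desired. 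The measure will be $d\mu(z)=v(|z|)\,dA(z)$ with $v(r)=(\log\tfrac{e}{1-r})^{-\g}$ for a parameter $\g$ to be chosen; note that $v\in\DD$, so Theorem~\ref{Lemma:kernels} applies with $\nu=v$.

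With these choices, the display above and the substitution $u=\log\tfrac{e}{1-t}$ yield
    $$
    \widetilde{\mathcal T}_\mu(z)\asymp\om^\star(z)\int_0^{|z|}\frac{\widehat v(t)}{\widehat\om(t)^2(1-t)^2}\,dt\asymp(1-|z|)\Bigl(\log\tfrac{e}{1-|z|}\Bigr)^{\a-\g},
    $$
provided $\g<2\a-1$. The same substitution reduces $\int_\D\widetilde{\mathcal T}_\mu(z)(\om/\om^\star)(z)\,dA(z)$ to $\int_1^\infty u^{\a-\g-1}\,du$, which is finite iff $\g>\a$, and reduces $\int_\D\widetilde{\mathcal T}_\mu(z)\,dA(z)/(1-|z|)^2$ to $\int_1^\infty u^{\a-\g}\,du$, which diverges for $\g\le\a+1$. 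Any $\g\in(\a,\min(\a+1,2\a-1))$, for example $\a=2$ and $\g=5/2$, therefore gives simultaneously $\widetilde{\mathcal T}_\mu\in L^1_{\om/\om^\star}$ and $\widetilde{\mathcal T}_\mu\notin L^1(dA/(1-|\cdot|)^2)$.

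Since $\mu\ge0$ the operator $\mathcal T_\mu$ is positive, and its boundedness on $A^2_\om$ follows from the Carleson measure characterization of \cite{PelRatEmb} together with the easy estimate $\mu(S(a))/\om(S(a))\asymp(1-|a|)(\log\tfrac{e}{1-|a|})^{\a-1-\g}\to0$. Theorem~\ref{b1} then gives $\mathcal T_\mu\in\SSS_1(A^2_\om)$, completing the argument. I do not foresee a genuine obstacle here; the only point that requires care is to stay in the regime $\g<2\a-1$ when applying Theorem~\ref{Lemma:kernels}, since otherwise $\|B^\om_z\|^2_{A^2_v}$ saturates to a constant and one loses a logarithmic power, pushing the example out of the desired range.
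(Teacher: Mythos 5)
Your construction is correct and is essentially the paper's own: the same weight $\om(z)=[(1-|z|)(\log\frac{e}{1-|z|})^\a]^{-1}$ and the same family of measures (your exponent $\g$ is the paper's $\a+\b$), with the Berezin transform computed from \eqref{Eq:BerezinTransformation} and membership in $\SSS_1$ concluded from Theorem~\ref{b1}. The only differences are cosmetic: you evaluate $\|B^\om_z\|^2_{L^2_\mu}$ via the integral formula of Theorem~\ref{Lemma:kernels} where the paper uses the moment expansion, and you explicitly check boundedness of $\mathcal T_\mu$, which the paper leaves implicit.
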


\begin{proof}
Choose $\om(z)=\left[(1-|z|)\left(\log\frac{e}{1-|z|} \right)^\a\right]^{-1}$, where $\a>2$, and $d\mu(z)=u(z)\,dA(z)$, where
$u(z)=\left(\log\frac{e}{1-|z|} \right)^{-\b-\a}$ and $0<\b< \min\{1,\a-2\}$.
Then, by Lemma~\ref{Lemma:replacement-Lemmas-Memoirs},
    \begin{equation*}
    \begin{split}
    \|B^\om_z\|^2_{L^2_\mu}
    =\sum_{n=0}^\infty\frac{|z|^{2n}}{\left[\left(v_\a\right)_{2n+1}\right]^2}u_n
    \asymp\sum_{n=1}^\infty\frac{|z|^{2n}}{(n+1)}(\log n)^{\a-\b-2}\asymp \left(\log
    \frac{e}{1-|z|}\right)^{\a-\b-1},
    \end{split}
    \end{equation*}
and hence
    $$
    \widetilde{\mathcal T}_\mu(z)
    \asymp\om^\star(z)\|B^\om_z\|^2_{L^2_\mu}\asymp (1-|z|)\left(\log
    \frac{e}{1-|z|}\right)^{-\b},\quad|z|\ge \frac12,
    $$
by \eqref{Eq:BerezinTransformation}. It follows that $\widetilde{\mathcal{T}}_\mu\notin L^1\left(\frac{dA}{(1-|\cdot|)^2}\right)$. However,
    \begin{equation*}
    \begin{split}
    \int_{\D\setminus D(0,\frac12)}\widetilde{\mathcal T}_\mu(z)\frac{\om(z)}{\om^\star(z)}\,dA(z)
    &\asymp\int_{\D}\frac{dA(z)}{(1-|z|)\left(\log\frac{e}{1-|z|} \right)^{\b +1}}<\infty,
    \end{split}
    \end{equation*}
and hence $\mathcal T_\mu\in\mathcal{S}_1(A^2_\om)$ by Theorem~\ref{b1}.
\end{proof}

\section{Schatten class composition
operators}\label{sec:composition}

The main purpose of this section is to prove Theorem~\ref{Thm:CompositionBoundedValence}. The
following result of its own interest plays a role in the proof.

\begin{proposition}\label{Thm:CompositionSufficientNecessary}
Let $0<p<\infty$ and $\omega\in\DD$, and let $\vp$ be an analytic
self-map of $\D$. Then the condition \eqref{39intro}
is sufficient if $0<p\le2$ and necessary if $2\le p<\infty$ for
$C_\vp$ to belong to $\SSS_p(A^2_\omega)$.
\end{proposition}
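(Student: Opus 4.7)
The plan is to identify the Berezin transform of the positive operator $C_\vp C_\vp^\star$ with the integrand of \eqref{39intro}, and then apply Lemma~\ref{b3} together with the standard Schatten-class identity $|T|_p^p=|TT^\star|_{p/2}^{p/2}$. The sufficiency is then Lemma~\ref{b3}(ii) and the necessity is Lemma~\ref{b3}(i).

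First I would compute
$$
\widetilde{C_\vp C_\vp^\star}(z)=\langle C_\vp C_\vp^\star b^\om_z,b^\om_z\rangle_{A^2_\om}=\|C_\vp^\star b^\om_z\|_{A^2_\om}^2=\frac{\|B^\om_{\vp(z)}\|_{A^2_\om}^2}{\|B^\om_z\|_{A^2_\om}^2},
$$
where the last equality uses the classical adjoint identity $C_\vp^\star B^\om_z=B^\om_{\vp(z)}$ (an immediate consequence of the reproducing formula \eqref{repfor}). By Theorem~\ref{Lemma:kernels} (with $p=2$, $\nu=\om$, $n=0$) combined with Lemma~\ref{Lemma:replacement-Lemmas-Memoirs}(v), one has $\|B^\om_z\|_{A^2_\om}^2\asymp 1/\om^\star(z)$ for $|z|$ bounded away from the origin. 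Hence
$$
\widetilde{C_\vp C_\vp^\star}(z)\asymp\frac{\om^\star(z)}{\om^\star(\vp(z))},\qquad|z|\to1^-,
$$
which (modulo a harmless contribution from any fixed compact subset of $\D$) makes the condition \eqref{39intro} equivalent to $\widetilde{C_\vp C_\vp^\star}\in L^{p/2}_{\om/\om^\star}$.

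Next I would invoke the functional-calculus identity $|T|_p^p=|TT^\star|_{p/2}^{p/2}$, valid for any bounded operator $T$ on a separable Hilbert space and any $0<p<\infty$ (see, e.g., \cite[Proposition~1.29]{Zhu}), to get
$$
C_\vp\in\SSS_p(A^2_\om)\iff C_\vp C_\vp^\star\in\SSS_{p/2}(A^2_\om).
$$
Since $C_\vp C_\vp^\star$ is positive, Lemma~\ref{b3} applied with exponent $p/2$ finishes the argument. If $0<p\le2$ (so $p/2\le1$), Lemma~\ref{b3}(ii) gives $\widetilde{C_\vp C_\vp^\star}\in L^{p/2}_{\om/\om^\star}\Rightarrow C_\vp C_\vp^\star\in\SSS_{p/2}(A^2_\om)\Rightarrow C_\vp\in\SSS_p(A^2_\om)$, which is the sufficiency. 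If $2\le p<\infty$ (so $p/2\ge1$), Lemma~\ref{b3}(i) gives the reverse implication, which is the necessity.

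The main technical nuisance is the sufficiency direction: Lemma~\ref{b3}(ii) presupposes that $C_\vp C_\vp^\star$ is a bounded operator, and hence that $C_\vp$ is bounded on $A^2_\om$, in order for $\widetilde{C_\vp C_\vp^\star}$ to be defined. Under \eqref{39intro} with $0<p\le2$ one extracts a pointwise bound on $\om^\star(z)/\om^\star(\vp(z))$ by averaging the integrand over pseudohyperbolic discs and using the regularity properties of $\om^\star$ collected in Section~\ref{sec2} (in particular the doubling estimates of Lemma~\ref{Lemma:replacement-Lemmas-Memoirs}); this feeds into a standard boundedness criterion for $C_\vp:A^2_\om\to A^2_\om$ in the spirit of \cite[Lemma~23]{PR2016/2}. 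Once this a priori boundedness is in place, the two steps above apply verbatim and yield the proposition.
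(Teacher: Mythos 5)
Your argument is essentially identical to the paper's: both identify $\widetilde{C_\vp C_\vp^\star}(z)=\|C_\vp^\star b_z^\om\|_{A^2_\om}^2=\|B_{\vp(z)}^\om\|_{A^2_\om}^2/\|B_z^\om\|_{A^2_\om}^2\asymp\om^\star(z)/\om^\star(\vp(z))$ via Theorem~\ref{Lemma:kernels} and Lemma~\ref{Lemma:replacement-Lemmas-Memoirs}, pass from $C_\vp\in\SSS_p$ to $C_\vp C_\vp^\star\in\SSS_{p/2}$ by the standard Schatten-class identity from \cite{Zhu}, and conclude with Lemma~\ref{b3}. The only difference is your closing remark on the a priori boundedness of $C_\vp$ needed for the sufficiency direction, a point the paper leaves implicit; your sketch of how to handle it is reasonable and does not change the substance of the proof.
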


\begin{proof}
First observe that
    \begin{equation}\label{eq:c2}
    \langle f,C_\vp^\star(b_z^\omega)\rangle_{A^2_\omega}=\langle
    C_\vp(f),b_z^\omega\rangle_{A^2_\omega}
    =\|B_z^\omega\|_{A^2_\omega}^{-1}\langle C_\vp(f),B_z^\omega\rangle_{A^2_\omega}
    =\|B_z^\omega\|_{A^2_\omega}^{-1}f(\vp(z)),
    \end{equation}
and hence
$C_\vp^\star(b_z^\omega)=\|B_z^\omega\|_{A^2_\omega}^{-1}B_{\vp(z)}^\omega$.
Consequently,
% taking $T=C_\vp^\star$ in \eqref{eq:c1}
    \begin{equation}\label{40}
    \|C_\vp^\star(b_z^\omega)\|_{A^2_\omega}^2
    =\frac{\|B_{\vp(z)}^\omega\|_{A^2_\omega}^2}{\|B_{z}^\omega\|_{A^2_\omega}^2}
    \asymp\frac{\om(S(z))}{\omega((S(\vp(z))))},\quad z\in\D,
    \end{equation}
by Theorem~\ref{Lemma:kernels}. This and
Lemma~\ref{Lemma:replacement-Lemmas-Memoirs} yield
    \begin{eqnarray*}
    \int_\D\left(\frac{\omega^\star(z)}{\omega^\star(\vp(z))}\right)^\frac{p}{2}\frac{\omega(z)}{\omega^\star(z)}\,dA(z)
    \asymp\int_\D\|C_\vp^\star(b_z^\omega)\|_{A^2_\omega}^p\frac{\omega(z)}{\omega^\star(z)}\,dA(z)
     =
    \int_\D\|\widetilde{T}(z)\|_{A^2_\omega}^{\frac{p}{2}}\frac{\omega(z)}{\omega^\star(z)}\,dA(z),
    \end{eqnarray*}
where $T=C_\vp C_\vp^\star$. The assertion follows from
\cite[Theorem~1.26]{Zhu} and Lemma~\ref{b3}.

An alternative way to establish the assertions is to follow the
reasoning in \cite[p.~1143]{LuZhu92}.
\end{proof}

\begin{Prf}{\em{Theorem~\ref{Thm:CompositionBoundedValence}.}}
Since $C_\vp^\star$ can be formally computed as
    \begin{equation*}
    \begin{split}
    C_\vp^\star(f)(z)
    &= \langle C_\vp^\star f,B_z^\omega\rangle_{A^2_\omega}=\langle
    f, C_\vp(B_z^\omega)\rangle_{A^2_\omega}
    =\langle f,B_z^\omega(\vp)\rangle_{A^2_\omega}\\
    &=\int_{\D}f(\z)B_z^\omega(\vp(\z))\om(\z)\,dA(\z),
    \end{split}
    \end{equation*}
it follows that
    $$
    C_\vp^\star C_\vp(f)(z)=\int_{\D}f(\vp(\z))B_z^\omega(\vp(\z))\om(\z)\,dA(\z).
    $$
Let $\mu$ be the pull-back measure defined by $\mu(E)=\om\left(\vp^{-1}(E)\right)$. Then
    $$
    C_\vp^\star C_\vp(f)(z)= \int_{\D}f(u)B_z^\omega(u)\,d\mu(u)=\Tm(f)(z),
    $$
and hence
%Recall that $C_\vp^\star C_\vp=T_\mu$, where
%$\mu=\mu_{2,1,\omega}$, and hence
$C_\vp\in\SSS_p(A^2_\omega)$ if and only if
$\mathcal{T}_\mu\in\SSS_{p/2}(A^2_\omega)$ by
\cite[Theorem~1.26]{Zhu}. Therefore, by Theorems~\ref{MTMSchatten}
and~\ref{Thm:CompositionSufficientNecessary}, it suffices to show
that \eqref{39intro} implies $\widetilde{\mathcal{T}}_\mu\in
L^\frac{p}{2}_{\om/\om^\star}$. To see this, we use
Theorem~\ref{Lemma:kernels} to write
    $$
    \widetilde{\mathcal{T}}_\mu(z)=\langle
    \mathcal{T}_\mu(b_z^\om),b_z^\om\rangle_{A^2_\omega}=\int_\D\frac{|B_z^\omega(\zeta)|^2}{\|B_z^\omega\|^2_{A^2_\om}}\,d\mu(\zeta)
    \asymp\omega(S(z))\int_\D|B_z^\omega(\vp(\zeta))|^2\omega(\zeta)\,dA(\zeta).
    $$
We will now argue as in \cite[p.~180]{Zhu2001}. Note first that
\cite[Theorem~4.2]{PelRat} gives
    $$
    \widetilde{\mathcal{T}}_\mu(z)\asymp\omega(S(z))|B_z^\omega(\vp(0))|^2
    +\omega(S(z))\int_\D|(B_z^\omega)'(\vp(\zeta))|^2|\vp'(\zeta)|^2\omega^\star(\zeta)\,dA(\zeta).
    $$
Hence it suffices to show that
    $$
    \Phi(z)=\omega(S(z))\int_\D|(B_z^\omega)'(\vp(\zeta))|^2|\vp'(\zeta)|^2\omega^\star(\zeta)\,dA(\zeta)
    $$
belongs to $L^{p/2}_{\omega/\omega^\star}$. To do this we will
use Shur's test with two measures \cite[Theorem~3.8]{Zhu}. Let
    $$
    \psi(\zeta)=\frac{\omega^\star(\zeta)}{\omega^\star(\vp(\zeta))},\quad
    d\nu(\zeta)=\frac{\omega(\vp(\zeta))}{\omega^\star(\vp(\zeta))}|\vp'(\zeta)|^2\,dA(\zeta)
    $$
and
    $$
    H(z,\zeta)=\frac{|(B_z^\omega)'(\vp(\zeta))|^2\omega(S(z))\omega^\star(\vp(\zeta))^2}{\omega(\vp(\zeta))},
    $$
so that the operator
    $$
    T(f)=\int_\D H(z,\zeta)f(\zeta)\,d\nu(\zeta)
    $$
satisfies $T(\psi)=\Phi$. Since $\vp$ is of bounded valence, we
obtain
    \begin{eqnarray*}
    \int_\D H(z,\zeta)\,d\nu(\zeta)
    &=&\omega(S(z))\int_\D|(B_z^\omega)'(\vp(\zeta))|^2\omega^\star(\vp(\zeta))|\vp'(\zeta)|^2\,dA(\zeta)\\
    &\asymp &\omega(S(z))\int_\D|(B_z^\omega)'(\xi)|^2\omega^\star(\xi)\,dA(\xi)\asymp1
    \end{eqnarray*}
by Theorem~\ref{Lemma:kernels}. Moreover, by
Theorem~\ref{Lemma:kernels},
    \begin{equation*}
    \begin{split}
    \int_\D
    H(z,\zeta)\frac{\omega(z)}{\omega^\star(z)}\,dA(z)
    &\asymp\frac{\omega^\star(\vp(\zeta))^2}{\omega(\vp(\zeta))}\int_\D|(B_z^\omega)'(\vp(\zeta))|^2\omega(z)\,dA(z)\\
    &=\frac{\omega^\star(\vp(\zeta))^2}{\omega(\vp(\zeta))}\int_\D|(B_{\vp(\zeta)}^\omega)'(z)|^2\omega(z)\,dA(z)
    \lesssim 1,
    \end{split}
    \end{equation*}
because $\omega\in\R$. Now $\psi\in L^{p/2}_{\omega/\omega^\star}$
by the assumption \eqref{39intro}, and
$\nu\lesssim\omega/\omega^\star$ by the Schwarz-Pick lemma and the
assumption $\omega\in\R$, so $\psi\in L^{p/2}_{\nu}$. Therefore we
may apply Schur's test (with both test functions equal to $1$) to
deduce that $T$ is a  bounded operator from $L^{p/2}_{\nu}$  into
$L^{p/2}_{\omega/\omega^\star}$, and thus, in particular,
$T(\psi)=\Phi\in L^{p/2}_{\omega/\omega^\star}$. Therefore
$\widetilde{\mathcal T}_\mu\in L^{p/2}_{\omega/\omega^\star}$ as
desired.
\end{Prf}

The following result is parallel to Proposition~\ref{Thm:CompositionSufficientNecessary}. By the Schwarz-Pick lemma, \eqref{39intro} implies \eqref{103} for all $0<p<\infty$ and $\om\in\R$, and therefore the case $0<p<2$ is of particular interest.

\begin{proposition}\label{Thm:Composition-Extra}
Let $0<p<\infty$ and $\omega\in\DD$, and let $\vp$ be an analytic
self-map of $\D$. Then the condition
    \begin{equation}\label{103}
    \int_\D\left(\frac{\omega^\star(z)}{\omega^\star(\vp(z))}\right)^\frac{p}{2}\frac{|\vp'(z)|^p(1-|z|^2)^{p-2}}{(1-|\vp(z)|^2)^p}\,dA(z)<\infty
    \end{equation}
is sufficient if $0<p\le2$ and necessary if $2\le p<\infty$ for
$C_\vp$ to belong to $\SSS_p(A^2_\omega)$.
\end{proposition}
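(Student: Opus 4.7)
The plan is to follow the strategy of Proposition~\ref{Thm:CompositionSufficientNecessary}, using the equivalence $C_\vp\in\SSS_p(A^2_\omega)\iff T:=C_\vp^\star C_\vp\in\SSS_{p/2}(A^2_\omega)$ from \cite[Theorem~1.26]{Zhu}, applied to the positive operator $T=\mathcal{T}_\mu$ with $\mu(E)=\omega(\vp^{-1}(E))$ the pullback measure. The new ingredient is to replace the crude pointwise computation of the Berezin transform of $C_\vp C_\vp^\star$ used in Proposition~\ref{Thm:CompositionSufficientNecessary} by a Littlewood--Paley representation of $\widetilde{\mathcal T}_\mu$. Applying the identity of \cite[Theorem~4.2]{PelRat} to $b_z^\omega\circ\vp$ gives
\begin{equation*}
\widetilde{\mathcal T}_\mu(z)=\|b_z^\omega\circ\vp\|_{A^2_\omega}^2\asymp|b_z^\omega(\vp(0))|^2+\int_\D|(b_z^\omega)'(\vp(\zeta))|^2|\vp'(\zeta)|^2\omega^\star(\zeta)\,dA(\zeta).
\end{equation*}

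Fubini's theorem together with the kernel identity
\begin{equation*}
\int_\D|(B_z^\omega)'(w)|^2\omega(z)\,dA(z)=\sum_{n\ge1}\frac{n^2|w|^{2(n-1)}}{2\omega_{2n+1}}\asymp\frac{1}{\omega^\star(w)(1-|w|^2)^2},
\end{equation*}
which follows from Parseval's identity and Lemma~\ref{Lemma:replacement-Lemmas-Memoirs}, gives at $p=2$ the precise equivalence
\begin{equation*}
\int_\D\widetilde{\mathcal T}_\mu(z)\frac{\omega(z)}{\omega^\star(z)}\,dA(z)\asymp\int_\D\frac{\omega^\star(z)|\vp'(z)|^2}{\omega^\star(\vp(z))(1-|\vp(z)|^2)^2}\,dA(z),
\end{equation*}
which is exactly \eqref{103} at $p=2$. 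Combined with Theorem~\ref{b1}, this already yields the $p=2$ case of Proposition~\ref{Thm:Composition-Extra} as an iff, which is the pivot between the two ranges.

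For general $p$ I would follow the atomic-decomposition reasoning of \cite[p.~1143]{LuZhu92}. For the sufficiency direction $0<p\le2$: apply Khinchine's inequality and Rademacher functions to $C_\vp$ acting on atomic decompositions $F=\sum c_j b_{p,z_j}^\omega$ along a hyperbolic lattice $\{z_j\}$ (Proposition~\ref{pr:atomicdec}), and use the pointwise control provided by Lemma~\ref{b6} together with the Littlewood--Paley representation above on each atom to bound $|C_\vp|_p^p$ from above by \eqref{103}. For the necessity direction $p\ge2$: apply Lemma~\ref{b3}(i) to obtain $\widetilde{\mathcal T}_\mu\in L^{p/2}_{\omega/\omega^\star}$, and invoke a Schur-type test on the integral kernel $|(b_z^\omega)'(\vp(\zeta))|^2$ (mirroring the argument in the proof of Theorem~\ref{Thm:CompositionBoundedValence}) with test functions involving the hyperbolic derivative $|\vp'(z)|(1-|z|^2)/(1-|\vp(z)|^2)$ to extract the integrability of \eqref{103}.

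The main obstacle is the sufficiency direction in the range $p<2$. Condition \eqref{103} does not in general imply $\widetilde{\mathcal T}_\mu\in L^{p/2}_{\omega/\omega^\star}$, as one verifies already for the elementary example $\vp(z)=z/2$ with $\omega(z)=(1-|z|)^\alpha$: here both \eqref{103} and $C_\vp\in\SSS_p(A^2_\omega)$ hold for every $p>0$, while $\widetilde{\mathcal T}_\mu\in L^{p/2}_{\omega/\omega^\star}$ only for $p>2/(\alpha+2)$. Hence a direct application of Lemma~\ref{b3}(ii) to $T$ cannot close the sufficiency argument in this range, and the discrete atomic-decomposition route through the hyperbolic lattice is genuinely needed.
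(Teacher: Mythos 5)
Your reduction to $\mathcal T_\mu=C_\vp^\star C_\vp$ and the computation at $p=2$ are correct, but the two directions that actually constitute the proposition are not proved, and the tools you name point the wrong way. Khinchine's inequality applied to atoms $\sum_j c_jb^\om_{p,z_j}$ along a lattice produces \emph{lower} bounds for $|C_\vp|_p$ (that is how necessity of discrete conditions is extracted, cf.\ the proof of Theorem~\ref{th:qmenorp}); it cannot bound $|C_\vp|_p$ \emph{above} by \eqref{103}, and you have yourself ruled out the only upper-bound mechanism available for $p/2\le1$, namely Lemma~\ref{b3}(ii). So the sufficiency for $0<p<2$ is left without an argument. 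Symmetrically, a Schur test establishes boundedness of an integral operator, i.e.\ an \emph{upper} bound; to deduce \eqref{103} from $\widetilde{\mathcal T}_\mu\in L^{p/2}_{\om/\om^\star}$ you would need a pointwise lower bound for $\widetilde{\mathcal T}_\mu(z)$ localized where $\vp(\z)$ lies near $z$, which your sketch does not supply. A small but telling error: for $\vp(z)=z/2$ and $\om(z)=(1-|z|)^{\a}$, condition \eqref{103} reads $\int_\D(1-|z|)^{(\a+2)p/2+p-2}\,dA(z)<\infty$ and hence fails for $p\le 2/(\a+4)$, so your claim that \eqref{103} holds there for every $p>0$ is false (the example does still show that \eqref{103} does not imply $\widetilde{\mathcal T}_\mu\in L^{p/2}_{\om/\om^\star}$ on the range $2/(\a+4)<p\le 2/(\a+2)$).

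The missing idea is that both directions follow in a few lines from the counting-function characterization of \cite[Theorem~3]{PR2016/2}, with no operator theory at all. By the Schwarz--Pick lemma,
\[
\frac{|\vp'(z)|^p(1-|z|^2)^{p-2}}{(1-|\vp(z)|^2)^{p}}
=\frac{|\vp'(z)|^2}{(1-|\vp(z)|^2)^{2}}\left(\frac{|\vp'(z)|(1-|z|^2)}{1-|\vp(z)|^2}\right)^{p-2}
\le\frac{|\vp'(z)|^2}{(1-|\vp(z)|^2)^{2}},\quad p\ge2,
\]
with the inequality reversed for $p\le2$. The non-univalent change of variables $\z=\vp(z)$ converts the resulting integral into $\int_\D N_{\vp,(\om^\star)^{p/2}}(\z)\,\om^\star(\z)^{-p/2}(1-|\z|^2)^{-2}\,dA(\z)$, and the elementary inequality $\sum_k a_k^{p/2}\le\bigl(\sum_k a_k\bigr)^{p/2}$ for $p\ge2$ (again reversed for $p\le2$), applied to $\{\om^\star(w):w\in\vp^{-1}(\z)\}$, compares this with $\int_\D\bigl(N_{\vp,\om^\star}(\z)/\om^\star(\z)\bigr)^{p/2}\frac{dA(\z)}{(1-|\z|)^2}$, whose finiteness is equivalent to $C_\vp\in\SSS_p(A^2_\om)$. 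For $p\ge2$ every step gives an upper bound for \eqref{103} in terms of the Schatten condition (necessity); for $p\le2$ every step reverses (sufficiency). If you want to avoid quoting \cite[Theorem~3]{PR2016/2}, you must in effect reprove it, which requires the sub-mean-value property of $N_{\vp,\om^\star}$ rather than the Berezin-transform machinery you propose.
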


\begin{proof}
Let first $p\ge2$. The Schwarz-Pick lemma, a change of variable and
a standard inequality yield
    \begin{equation*}
    \begin{split}
    &\int_\D\left(\frac{\omega^\star(z)}{\omega^\star(\vp(z))}\right)^\frac{p}{2}\frac{|\vp'(z)|^p(1-|z|^2)^{p-2}}{(1-|\vp(z)|^2)^p}\,dA(z)\\
    &\le\int_\D\left(\frac{\omega^\star(z)}{\omega^\star(\vp(z))}\right)^\frac{p}{2}\frac{|\vp'(z)|^2}{(1-|\vp(z)|^2)^2}\,dA(z)\\
    &=\int_\D\frac{N_{\varphi,(\om^\star)^{p/2}}(\z)}{\omega^\star(\z)^\frac{p}{2}}
    \frac{dA(\z)}{(1-|\z|^2)^2}
    \le\int_\D \left(\frac{N_{\varphi,\om^\star}(\z)}{\omega^\star(\z)}\right)^\frac{p}{2}
    \frac{dA(\z)}{(1-|\z|)^2},
    \end{split}
    \end{equation*}
and hence the assertion follows by \cite[Theorem~3]{PR2016/2}. A similar
reasoning shows the case $p<2$.
\end{proof}


\begin{thebibliography}{99}

\bibitem{AS}            A.~Aleman and A.~Siskakis,
                        Integration operators on Bergman spaces,
                        Indiana Univ. Math. J. 46 (1997), 337--356.

\bibitem{ChoeKooYiPotAnal02}
                       B.~R.~Choe, H.~Koo, and H.~Yi, Positive Toepliz operators between
                        harmonic Bergman spaces, Potential Anal. 17 (2002), no.~4,  307--335.

\bibitem{CobInd73}      L.~A.~Coburn, Singular integral operators and Toeplitz operators on odd spheres,
                        Indiana Univ. Math J. 23 (1973/74), 433--439.

\bibitem{OC}            O.~Constantin,
                       Carleson embeddings and some classes of operators on weighted Bergman
                        spaces,
                        J. Math. Anal. Appl. 365 (2010), no. 2, 668--682.

\bibitem{DS}            N.~Dunford and J.~T.~Schwartz, Linear operators $I$, Wiley, New York, 1988.

\bibitem{LinTzaClassical} J.~Lindenstrauss and L.~Tzafriri, Classical Banach spaces,
                          Lecture Notes in Math., 338, Springer-Verlag, Berlin-New York,
                          1973.

\bibitem{Lu87}          D.~H.~Luecking, Trace ideal criteria for Toeplitz operators,
                        J. Funct. Anal. 73 (1987),  345--368.

\bibitem{LuZhu92}       D.~H.~Luecking and K.~Zhu,
                        Composition operators belonging to the Schatten ideals,
                        Amer. J. Math. 114 (1992), 1127--1145.

\bibitem{McSuInd79}     G.~McDonald and C.~Sundberg, Toeplitz operators on the disc, Indiana Univ. Math J. 28 (1979), no.~4, 595--611.

\bibitem{LuskyStud87}    W.~Lusky, On generalized Bergman spaces, Studia Math. 119
                          (1996), no.~1, 77--95.

\bibitem{PauZhao}       J.~Pau and R.~Zhao, Carleson measures and
                        Toeplitz operators for weighted Bergman
                        spaces of the unit ball, Michigan Math. J.
                        64 (2015), 759--796.

\bibitem{PelSum14}     J.~A.~ Pel\'aez, Small weighted Bergman spaces,
                       Proceedings of the summer school in complex and harmonic analysis, and related
                       topics, (2016).

\bibitem{PelRat}        J.~A.~ Pel\'aez and J. R\"atty\"a,
                        Weighted Bergman spaces induced by rapidly increasing weights,
                        Mem. Amer. Math. Soc. 227 (2014), no.~1066.

\bibitem{PelRatEmb}     J. A. Pel\'aez and J. R\"atty\"a,
                        Embedding theorems for Bergman spaces via harmonic analysis,
                        Math. Ann. 362 (2015), no. 1-2, 205--239.

\bibitem{Twoweight}     J. A. Pel\'aez and J. R\"atty\"a,
                        Two weight inequality for Bergman projection,
                        J. Math. Pures Appl.~(9) 105 (2016), no~1, 102--130.

\bibitem{PR2016/2}      J. A. Pel\'aez and J. R\"atty\"a,
                        Trace class criteria for Toeplitz and composition operators on small Bergman spaces,
                        Adv.~Math. 293 (2016), 606--643.

\bibitem{TwoweightII}   J. A. Pel\'aez and J. R\"atty\"a,
                        Weighted Bergman projections, preprint.

\bibitem{PelRatSie2015} J.~A.~ Pel\'aez, J. R\"atty\"a and K. Sierra, Embedding Bergman spaces into tent spaces,
                                Math.~Z. 281 (2015), no.~3-4, 1215--1237.

\bibitem{XiaPams2003}      J.~Xia, On a proposed characterization of Schatten-class composition operators,
                            Proc. Amer. Math. Soc. 131 (2003),
                            no.~8, 2505--2514.

\bibitem{ZeyPams10}        Y.~Zeytuncu, Weighted Bergman projections and kernels: $L^p$ regularity and zeros,
                           Proc. Amer. Math. Soc. 139 (2011), no.~6, 2105--2112.

\bibitem{ZhuTams87}             K.~Zhu, $VMO$, $ESV$, and Toeplitz operators on the Bergman space,
                                Trans. Amer. Math. Soc. 302 (1987), no.~2, 617--646.

\bibitem{Zhu2001}       K. Zhu, Schatten class composition operators on the weighted Bergman spaces of the disk,
                        J. Operator Theory 46 (2001), 173--181.

\bibitem{Zhu}           K. Zhu, Operator Theory in Function Spaces,
                        Second Edition, Math. Surveys and Monographs, Vol. 138, American Mathematical
                        Society: Providence, Rhode Island, 2007.

\bibitem{ZhuNY07}       K.~Zhu, Schatten class Toeplitz operators on
                        weighted Bergman spaces of the unit ball,
                        New York J. Math. 13 (2007), 299--316.
\end{thebibliography}
\end{document}